\def\volume{\operatorname{vol}}
\def\op{\operatorname}
\def\svolball#1#2{{\volume(\underline B_{#2}^{#1})}}
\def\svolann#1#2{{\volume(\underline A_{#2}^{#1})}}
\def\svolsp#1#2{{\volume(\partial \underline B_{#2}^{#1})}}
\begin{document}

\newtheorem{Thm}{Theorem}[section]
\newtheorem{Def}[Thm]{Definition}
\newtheorem{Lem}[Thm]{Lemma}
\newtheorem{Rem}[Thm]{Remark}

\newtheorem{Cor}[Thm]{Corollary}
\newtheorem{Prop}[Thm]{Proposition}
\newtheorem{Example}[Thm]{Example}
\newcommand{\g}[0]{\textmd{g}}
\newcommand{\pr}[0]{\partial_r}
\newcommand{\dif}{\mathrm{d}}
\newcommand{\bg}{\bar{\gamma}}
\newcommand{\md}{\rm{md}}
\newcommand{\cn}{\rm{cn}}
\newcommand{\sn}{\rm{sn}}
\newcommand{\seg}{\mathrm{seg}}

\newcommand{\Ric}{\mbox{Ric}}
\newcommand{\Iso}{\mbox{Iso}}
\newcommand{\ra}{\rightarrow}
\newcommand{\Hess}{\mathrm{Hess}}
\newcommand{\RCD}{\mathsf{RCD}}

\title{Almost volume cone implies almost metric cone for annuluses centered at a compact set in $\RCD(K, N)$-spaces}
\author{Lina Chen}
\address[Lina Chen]{Department of mathematics, Nanjing University, Nanjing China}

\email{chenlina\_mail@163.com}
\thanks{Supported partially by NSFC Grant 12001268 and a research fund from Nanjing University.} 

\maketitle

\pagestyle{fancy}\lhead{Almost volume cone implies almost metric cone}\rhead{Lina Chen}

\begin{abstract}

\setlength{\parindent}{10pt} \setlength{\parskip}{1.5ex plus 0.5ex
minus 0.2ex} 

In \cite{CC1}, Cheeger-Colding considered manifolds with lower Ricci curvature bound and gave some almost rigidity results about warped products including almost metric cone rigidity and quantitative splitting theorem. As a generalization of manifolds with lower Ricci curvature bound, for metric measure spaces in $\RCD(K, N)$, $1<N<\infty$, splitting theorem \cite{Gi13} and ``volume cone implies metric cone" rigidity for balls and annuluses of a point \cite{PG} have been proved. In this paper we will generalize Cheeger-Colding's \cite{CC1} result about ``almost volume cone implies almost metric cone for annuluses of a compact subset " to $\RCD(K, N)$-spaces. More precisely, consider a $\RCD(K, N)$-space $(X, d, \mathfrak m)$ and a Borel subset $\Omega\subset X$.  If the closed subset $S=\partial \Omega$ has finite outer curvature, the diameter $\op{diam}(S)\leq D$ and the mean curvature of $S$ satisfies
$$m(x)\leq m, \, \forall x\in S,$$
and
$$\mathfrak m(A_{a, b}(S))\geq  (1-\epsilon)\int_a^b \left(\op{sn}'_H(r)+ \frac{m}{n-1}\op{sn}_H(r)\right)^{n-1}dr \mathfrak m_S(S)$$
then $A_{a', b'}(S)$ is measured Gromov-Hausdorff close to a warped product $(a', b')\times_{\op{sn}'_H(r)+ \frac{m}{n-1}\op{sn}_H(r)}Y$, $A_{a, b}(S)=\{x\in X\setminus \Omega, \, a<d(x, S)<b\}$, $a<a'<b'<b$, $Y$ is a metric space with finite components with each component a $\RCD(0, N-1)$-space when $m=0, K=0$ or  a $\RCD(N-2, N-1)$-space for other cases and $H=\frac{K}{N-1}$. Note that when $m=0, K=0$, our result is a kind of quantitative splitting theorem and in other cases it is an almost metric cone rigidity. 

To prove this result, different from \cite{Gi13, PG}, we will use \cite{GiT}'s second order differentiation formula and a method similar as \cite{CC1}.
 \end{abstract}

\section{Introduction} 

In \cite{CC1}, Cheeger-Colding gave the following ``almost volume cone implies almost metric cone"  rigidity.

\begin{Thm}[\cite{CC1}] \label{ann-rig}

If a complete $N$-manifold $M$ with $\op{Ric}_M\geq (N-1)H$ and a compact subset $\Omega\subset M$ satisfies that the mean curvature
\begin{equation} m(x)\leq m, \forall \, x\in S=\partial \Omega, \label{mean-com}\end{equation}
$$\op{diam}(S)\leq D,$$
and 
\begin{equation} \volume(A_{a, b}(S))\geq (1-\epsilon) \int_a^b \left(\op{sn}'_H(r)+ \frac{m}{N-1}\op{sn}_H(r)\right)^{n-1}dr \volume(S),\label{alm-max}\end{equation}
then 
$$d_{GH}(A_{a+\alpha, b-\alpha}(S), (a+\alpha, b-\alpha)\times_{\op{sn}'_H(r)+ \frac{m}{N-1}\op{sn}_H(r)} Y)\leq \Psi(\epsilon | N, H, m, a, b, \alpha, D).$$
where $A_{a, b}(S)=\{x\in X\setminus \Omega, \, a<d(x, S)<b\}$, $Y$ is a length metric space with at most $C(N, H, a, b, D)$ components $Y_i$ such that $\op{diam}(Y_i)\leq c(N, H, m, a, b, \alpha, D)$ and 
$$\op{sn}_H(r)=\begin{cases} \frac{\sin\sqrt{H}r}{\sqrt H}, & H>0;\\ r, & H=0;\\ \frac{\sinh \sqrt{-H}r}{\sqrt{-H}}, & H<0. \end{cases}$$
\end{Thm}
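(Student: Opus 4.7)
The plan is to follow Cheeger-Colding's approach: use the distance-to-boundary function $r(x) = d(x,S)$ on $A_{a,b}(S)$ as the radial coordinate for a candidate warped-product structure with model warping function $h(r) = \op{sn}'_H(r) + \frac{m}{N-1}\op{sn}_H(r)$. First, the mean curvature hypothesis $m(x)\leq m$ on $S$ together with $\Ric \geq (N-1)H$ gives, by a Riccati argument along the equidistant foliation of $S$, the pointwise Laplacian comparison $\Delta r \leq (N-1) h'(r)/h(r)$ on the smooth part of $A_{a,b}(S)$: the initial condition of the Riccati equation is controlled by the mean curvature bound, and the evolution is controlled by the Ricci bound. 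Integrating this along the normal flow yields a Heintze-Karcher upper bound on $\volume(A_{a,b}(S))$ whose equality case is precisely the warped product on $(a,b) \times_h S$; the hypothesis \eqref{alm-max} turns this inequality into an almost equality.

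The second step converts the almost equality into a quantitative Hessian estimate. Let $\rho$ be a primitive of $h$, so that in the model $\Hess \rho = h'(r) g$ and $\Delta \rho = N h'(r)$ depends only on $r$; in general $\Delta \rho \leq N h'(r)$. Applying Bochner's formula to $\rho$, together with $|\Hess \rho|^2 \geq (\Delta \rho)^2 / N$ plus the trace-free defect and the Ricci bound, gives a pointwise inequality whose equality case is the warped-product model. Integrating against a cutoff supported in $A_{a',b'}(S)$ and using the almost maximality of volume from \eqref{alm-max} produces
\[
\int_{A_{a',b'}(S)} \bigl|\Hess \rho - h'(r) g\bigr|^2 \, d\volume \leq \Psi(\epsilon \mid N, H, m, a, b, \alpha, D),
\]
where the buffer $\alpha$ absorbs the boundary terms from the integration by parts and allows one to work away from the cut locus of $S$.

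The third step upgrades $L^2$-Hessian smallness to Gromov-Hausdorff closeness. Cheeger-Colding's segment inequality converts smallness of $|\Hess \rho - h'(r) g|^2$ into pointwise smallness, on a set of almost full measure, of the defect in the warped-product distance formula along geodesic segments; an Abresch-Gromoll-style excess estimate then propagates the control to all relevant pairs. The cross-section $Y$ is constructed as an equidistant set $\{r = r_0\}$ for a good $r_0 \in (a',b')$ with its induced length metric rescaled by $h(r_0)^{-1}$, and the gradient flow of $\rho$ provides a fibration $A_{a',b'}(S) \to (a',b') \times_h Y$; combining this with the pointwise distance estimate yields the required $\Psi$-Gromov-Hausdorff approximation. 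The diameter bound on $S$ together with Bishop-Gromov controls the number of connected components of $Y$ and the diameter of each component.

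The main obstacle I expect is this last step: the passage from $L^2$-Hessian smallness to pointwise distance control and the construction of $Y$. Because the level sets of $r$ centered at a compact subset $S$ (rather than a point) can have several components and complicated topology, the base points used in the segment inequality and excess estimates must be chosen with care so that the estimates apply uniformly, and the natural ``origin" used in the annular case at a point is not available here. A secondary subtlety is that the Laplacian comparison for $r$ holds only distributionally across the cut locus of $S$, so every integration by parts against a test function must be justified by viscosity/superharmonic arguments rather than by classical calculation.
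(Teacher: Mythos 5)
Your proposal is a faithful summary of Cheeger--Colding's original argument in \cite{CC1} for Theorem~\ref{ann-rig}. Note, however, that the present paper does not actually prove this statement: it cites it to \cite{CC1} as motivation and proves instead the RCD analogue, Theorem~\ref{main}, by a structurally different route. Where you argue quantitatively on a fixed manifold --- mean-curvature-initialized Riccati/Laplacian comparison for $d(\cdot,S)$, then an $L^2$ bound on $\op{Hess}\rho - h'(r)g$ via Bochner against a cutoff, then the segment inequality and excess estimates to upgrade $L^2$-Hessian smallness to pointwise distance control and build the fibration onto $(a',b')\times_h Y$ by gradient flow --- the paper runs a compactness argument: take a sequence of spaces with $\epsilon_i\to 0$, pass to the measured Gromov--Hausdorff limit, and prove exact rigidity there. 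In the limit the Laplacian equals the model value exactly (Theorem~\ref{lap-main}), the Hessian equals the model Hessian exactly (Theorem~\ref{hess-est}), and the Gigli--Tamanini second-order differentiation formula along Wasserstein geodesics together with regular Lagrangian flows (Corollaries~\ref{first-diff2}, \ref{second-diff2}) replace the segment-inequality machinery and yield an exact Pythagoras theorem or cosine law (Theorem~\ref{cos-law}), hence the warped-product splitting. The modulus $\Psi(\epsilon\mid\cdots)$ in the conclusion then arises from precompactness of the class of $\RCD(K,N)$ spaces rather than from explicit inequalities; your version produces more explicit constants, while the paper's soft approach is what is currently available in the nonsmooth setting, where the segment-inequality method is harder to deploy directly. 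Both routes handle the two subtleties you correctly flag: the distributional nature of the Laplacian comparison across the cut locus of $S$, and the possibility that the cross-section $Y$ is disconnected (controlled in both cases by relative volume comparison together with the diameter bound on $S$).
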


If $S$ is a hypersurface of $M$, by Heintze-Karcher \cite{HK}, \eqref{mean-com} implies that 
\begin{equation} \volume(A_{a, b}(S))\leq \int_a^b \left(\op{sn}'_H(r)+ \frac{m}{N-1}\op{sn}_H(r)\right)^{N-1}dr \volume(S).\label{loc-vol}\end{equation}
In particular, if $M$ is compact with $\tilde D=\op{diam}(X)$,
\begin{equation}
\volume(M)\leq \int_{[-\tilde D, \tilde D]} \left(\op{sn}'_H(r)+ \frac{m}{N-1}\op{sn}_H(r)\right)^{N-1}dr \volume(S). \label{gvol-com}\end{equation}
And when $H>0$, the equality holds in \eqref{gvol-com}  iff $M$ and $N$ have constant curvature (see \cite{HK}).

In \cite{Ket2}, Ketterer extended Heintze-Karcher \cite{HK}'s results about the volume comparison \eqref{loc-vol} and the rigidity result for $K>0$ in $\RCD(K, N)$-spaces with $S=\partial \Omega$, $\Omega$ is Borel and $H=\frac{K}{N-1}$. 

In this note, we will generalize Theorem~\ref{ann-rig} to $\RCD(K, N)$-spaces which can also be treated as a quantitative version and a generalization of Ketterer \cite{Ket2}'s work (see Theorem~\ref{HK-general}) to arbitrary $K$.
In the following, we will use the same definitions of mean curvature, finite outer curvature and measure on $S$, $\mathfrak m_S$ as in \cite{Ket2} (see Section 2.7 for these definitions). 
\begin{Thm} \label{main} 

If a metric measure space $(X, d, \mathfrak m)\in \op{RCD}(K, N)$, $3\leq N<\infty$, $\op{supp}(\mathfrak m)=X$ and a Borel subset $\Omega\subset X$ satisfies that $S=\partial \Omega$ is closed,  $\op{diam}(S)\leq D$, $\mathfrak m(S)=0$, $S$ has finite outer curvature, the mean curvature
\begin{equation}m(x)\leq m, \, \forall \, x\in S, \label{mean-bound}\end{equation}
and
\begin{equation} \mathfrak m(A_{a, b}(S))\geq  (1-\epsilon)\int_a^b \left(\op{sn}'_H(r)+ \frac{m}{N-1}\op{sn}_H(r)\right)^{n-1}dr \mathfrak m_S(S), \label{lvol-com}\end{equation}
then
$$d_{mGH}(A_{a', b'}(S), (a', b')\times_{\op{sn}'_H(r)+ \frac{m}{N-1}\op{sn}_H(r)} Y)\leq \Psi(\epsilon | N, K, m, a, b, D),$$ 
where $H=\frac{K}{N-1}$, $a'=a+(b-a)/3, b'=b-(b-a)/3$ and $(Y, d_Y, \mathfrak m_Y)$ has at most $C(N, K, a, b, D)$ components $Y_i$ with each $Y_i\in \RCD(0, N-1)$ for $m=0, K=0$, $Y_i \in \RCD(N-2, N-1)$ for $m\neq 0$ or $K\neq 0$.
\end{Thm}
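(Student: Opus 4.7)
My approach follows Cheeger-Colding's scheme from \cite{CC1}, transferred to the $\RCD$ setting via Gigli-Tamanini's second-order calculus \cite{GiT}. Let $r(x) = d(x,S)$ for $x\in X\setminus\Omega$ and $h(r) = \op{sn}'_H(r) + \frac{m}{N-1}\op{sn}_H(r)$, so that $h'' + H h = 0$ with $h(0) = 1$ and $h'(0) = m/(N-1)$. Ketterer's sharp Heintze-Karcher comparison in $\RCD(K,N)$ yields the distributional Laplacian bound $\Delta r \leq (N-1) h'(r)/h(r)$ on the annulus $A_{0,\infty}(S)$. Integrating this against a suitable cutoff and using the coarea formula together with the sharp model identity $\mathfrak m(A_{a,b}(S))_{\max} = \int_a^b h(r)^{N-1} dr\cdot\mathfrak m_S(S)$, the hypothesis \eqref{lvol-com} forces the Laplacian comparison to be saturated up to error $\Psi(\epsilon)$ in $L^1(A_{a',b'}(S))$.

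The next step is Bochner pinching. I would take the auxiliary potential $u$ determined by $u'(r) = h(r)$, so that in the rigid warped-product model $\Hess u = h'(r)\, g$ and $\Delta u = N h'(r)$; a direct computation then shows the $N$-Bochner inequality
$$\tfrac{1}{2}\Delta|\nabla u|^2 - \langle \nabla u, \nabla \Delta u\rangle \geq \tfrac{(\Delta u)^2}{N} + K|\nabla u|^2$$
becomes an equality on the model warped product. In $\RCD(K,N)$, combining the above Bochner inequality with the $L^1$ near-saturation of $\Delta r$ and testing against $u$ and $|\nabla u|^2$ yields an integral Hessian pinching
$$\int_{A_{a',b'}(S)}\bigl|\Hess u - h'(r)\, g\bigr|^2 \, d\mathfrak m \leq \Psi(\epsilon \mid N, K, m, a, b, D).$$
Here \cite{GiT}'s second-order differentiation formula does the heavy lifting: it justifies $\tfrac{d}{dt}|\nabla u|^2(\gamma_t) = 2\,\Hess u(\nabla u, \nabla u)(\gamma_t)$ along the regular Lagrangian flow of $\nabla u$, replacing chain-rule manipulations that in \cite{CC1} are pointwise.

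With Hessian pinching in hand, the warped-product fibration is built by transporting along the gradient flow $\Phi_t$ of $u$. Let $Y$ be the level set $\{r = (a'+b')/2\}$, endowed with the rescaled length metric $d_Y = d_X|_Y / h((a'+b')/2)$ and the measure $\mathfrak m_Y$ obtained by disintegrating $\mathfrak m|_{A_{a',b'}(S)}$ along level sets of $r$. The pinching estimate implies that the map $\Phi:(a',b')\times Y\to A_{a',b'}(S)$, $(t,y)\mapsto \Phi_t(y)$, pulls the distance and measure of $X$ back to within $\Psi(\epsilon)$ of the warped product metric $dt^2 + h(t)^2 d_Y^2$ and measure $h(t)^{N-1} dt\otimes \mathfrak m_Y$ in the pmGH sense. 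The finiteness of the number of components of $Y$ and their diameter bounds come from volume comparison on the annulus together with $\op{diam}(S)\leq D$. The $\RCD$ regularity of each component of $Y$ then follows by pmGH stability: a sequence of pinched annuluses converges to an exact warped product, and a Ketterer-type characterization of warped products in $\RCD$ transfers $\RCD(K,N)$ on $X$ to $\RCD(0, N-1)$ on $Y$ when $m = K = 0$ and to $\RCD(N-2, N-1)$ in all other cases.

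The principal obstacle is upgrading the $L^2$ Hessian pinching of the second step to the genuine pmGH approximation of the third step. In the smooth setting one controls $|\Hess r|^2 - (\Delta r)^2/(N-1)$ pointwise along minimizing geodesics emanating from $S$, but in $\RCD(K,N)$ spaces the absence of a pointwise second fundamental form on $S$, together with the merely $L^2$ nature of $\Hess$, forces one to work with regular Lagrangian flows and their test-plan representations, and to carefully excise the measure-zero exceptional set where $r$ fails to admit a classical Hessian. This is precisely where Gigli-Tamanini's second-order differentiation formula is indispensable, and the circumvention of the approaches of \cite{Gi13, PG} is the main technical contribution.
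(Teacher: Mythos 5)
Your overall scheme shares the same ingredients as the paper's proof (Laplacian comparison from Ketterer's framework, Bochner pinching, a gradient-flow map to a warped product, Ketterer's cone theorem for the $\RCD$ regularity of $Y$), but it is unclear about, or glosses over, three points where the actual proof has to work hard.

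First, the logical mode. The paper's proof is a compactness/contradiction argument: take a sequence $(X_i, d_i, \mathfrak m_i)$ with $\epsilon_i\to 0$ converging in mGH to a limit $(X,d,\mathfrak m)$, then prove in that limit that $A_{a',b'}(S)$ is \emph{exactly} isometric to the warped product $(a',b')\times_w Y$ with $Y\in\RCD$. This matters because the theorem asserts that the model space $Y$ is an $\RCD$ space; the level set $\{r=(a'+b')/2\}$ inside an $\epsilon$-perturbed $X$ that you propose to take as $Y$ has no reason to be $\RCD$, only approximately so. Your sketch oscillates between a direct quantitative argument (CC1-style, ``$\Psi(\epsilon)$-errors propagate'') and a pmGH-stability appeal in the last step, but these are two different proofs, and only the second one actually produces an $\RCD$ model; you need to commit to it from the start. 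In the compactness argument, the Laplacian estimate, Hessian estimate, and Pythagoras/Cosine law all hold \emph{exactly} in the limit (Theorems~3.1, 4.1, 5.2 of the paper), which is what makes the later steps feasible.

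Second, the passage from $L^2$ Hessian pinching to the warped-product fibration is the heart of the argument and it is not addressed concretely. The paper's route, following Cheeger-Colding, is to integrate the vanishing of $\op{Hess}f_H - \op{sn}'_H(d_s+r_0)\,g$ twice along geodesics to obtain explicit integral Pythagoras/Cosine-law identities comparing $d(x,y)$, $d_s(x)$, $d_s(y)$, and $d(x,F_{d_s(x)-d_s(y)}(y))$. This is where Gigli-Tamanini's second-order formula (and its adaptation by Deng to distance functions and arbitrary Borel plans, Corollaries~2.18--2.19) is used, and the manipulations are delicate precisely because the Hessian only exists in a weak sense. Your sketch says ``the pinching estimate implies that $\Phi$ pulls distance and measure back within $\Psi(\epsilon)$,'' which names the conclusion but not the mechanism. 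Without the Pythagoras/Cosine-law step (Theorem~5.2) there is no way to see that $\Phi$ is an isometry, nor that the regular Lagrangian flow has a continuous representative (Theorem~5.3), which is needed to even define $\Phi$.

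Third, and most importantly, the $\RCD$ regularity of $Y$ does \emph{not} follow from a ``Ketterer-type characterization transfers $\RCD(K,N)$ on $X$ to $\RCD$ on $Y$.'' Ketterer's theorem (Theorem~2.21 in the paper) requires the \emph{entire} cone $C(Y)$ (or the full line $\Bbb R\times Y$) to be $\RCD(K,N)$, whereas the isometry you have only gives the $\RCD(K,N)$ condition on the annulus $A_{a',b'}(S)\cong (a'+r_0,b'+r_0)\times_{\op{sn}_H} Y$. For $K=0$ the paper escapes this by a scaling/translation argument that covers the full cone or line with rescaled copies of the annulus (obtaining $\op{CD}_{\op{loc}}$) and then invokes local-to-global. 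For $K\neq 0$ no such rescaling is available, and the paper has to directly re-run Ketterer's Dirichlet-form argument on the annulus: proving $d_{\mathcal E^C}=d_K$ and $\mathcal E^C=\op{Ch}^{C(Y)}$ locally, deriving a weak Bakry-\'Emery inequality for $Y$ from the one on the annulus, and then passing to the Bakry-Ledoux gradient estimate. Before any of this, one must also verify that $(Y,d_Y,\mathfrak m_Y)$ is infinitesimally Hilbertian, a.e.\ locally doubling, measured-length, and that the warped product satisfies Sobolev-to-Lipschitz (Theorems~6.1--6.4), none of which are automatic. This entire section of work is absent from your proposal and cannot be subsumed into ``pmGH stability plus Ketterer.''
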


\begin{Rem}
(i) When $K=0, m=0$, \eqref{lvol-com} becomes 
\begin{equation}\mathfrak m(A_{a, b}(S))\geq (1-\epsilon)(b-a)\mathfrak m_S(S)\label{lvol-com-imp0}\end{equation}
and Theorem~\ref{main} is a kind of quantitative splitting theorem. 

In \cite{Hu}, Huang gave a quantitative splitting rigidity under \eqref{lvol-com-imp0} and a measure-decreasing-along-distance-function (MDADF) condition. The assumption that $S$ has finite outer measure and mean curvature upper bound \eqref{mean-bound} implies MDADF condition (see Lemma~\ref{vol-ele-com} and the definition of finite outer curvature). A better result we have is that $Y$ can be chosen as each component in $\RCD(0, N-1)$.

(ii) When $K\neq 0$ or $m\neq 0$, assume 
\begin{equation}m=(N-1)\frac{\op{sn}'_H(r_0)}{\op{sn}_H(r_0)},\label{mean-imp}\end{equation}
then \eqref{lvol-com} becomes
 \begin{equation} \frac{\mathfrak m(A_{a, b}(S))}{\mathfrak m_S(S)}\geq  (1-\epsilon)\frac{\svolann{H}{a+r_0, b+r_0}}{\svolsp{H}{r_0}}. \label{lvol-com-imp1}\end{equation}

In \cite{PG}, Philippis-Gigli pointed out that  ``volume cone implies metric cone" holds for annulus centered at a point where they assume  
$$\frac{\mathfrak m(A_{a, b}(x))}{\mathfrak m(\partial B_a(x))}\geq (1-\epsilon)\frac{\svolann{H}{a, b}}{\svolsp{H}{a}},$$
and derived a quantitative rigidity as in Theorem~\ref{main}.
Here $$\mathfrak m(\partial B_a(x))=\limsup_{\delta\to 0}\frac{\mathfrak m(A_{a, a+\delta}(x))}{\delta}.$$

Our result is a generalization of \cite{PG} in some sense.

(iii) When the equality holds in \eqref{lvol-com}, we have that $A_{a', b'}(S)$ has a warped product structure and $\partial B_{a'}(S)$ has constant mean curvature.
\end{Rem}

Now we give a sketch of the proof of Theorem~\ref{main}. Consider a sequence of $\RCD(K, N)$-spaces, $(X_i, d_i, \mathfrak m_i)$ which is measured Gromov-Hausdorff convergent to a $\RCD(K, N)$-space $(X, d, \mathfrak m)$. Assume $S_i=\partial \Omega_i, \Omega_i\subset X_i$ is Borel with $\mathfrak m_i(S_i)=0$ and $\op{diam}(S_i)\leq D$.
Define a signed distance function associated to $\Omega_i$.
$$d_{s, i}(x)=\begin{cases} d_i(x, S_i), & x\in X_i\setminus \Omega_i;\\ -d_i(x, S_i), & x\in \Omega_i. \end{cases}$$
Obviously, $d_{s, i}$ is $1$-Lipschitz.  
Then by \cite[Proposition 2.70]{Vi} or \cite[Proposition 2.12]{MN}, there is a $1$-Lipschitz function $d_s: X\to \Bbb R$ such that 
$d_{s, i}$ converges uniformly to $d_s$ on any compact set.  Let $S=\{x\in X, \, d_s(x)=0\}$, $\Omega=\{x\in X, \, d_s(x))\leq 0\}$. Then as in \cite[Lemma 3.25]{Hu}, we know that $d_s$ is a signed distance function associated to $\Omega$.

Assume $S_i$ has finite outer curvature and the mean curvature
$$m(x)\leq m, \forall\, x\in S_i,$$
\begin{equation} \mathfrak m(A_{a, b}(S_i))\geq  (1-\epsilon_i)\int_a^b \left(\op{sn}'_H(r)+ \frac{m}{n-1}\op{sn}_H(r)\right)^{n-1}dr \mathfrak m_{S_i}(S_i), \,  \epsilon_i\to 0. \label{lqvol-com}\end{equation}

To prove Theorem~\ref{main}, we only need to show that $A_{a', b'}(S)$ is isometric to $(a', b')\times_{\op{sn}'_H(r)+ \frac{m}{n-1}\op{sn}_H(r)} Y$ where $(Y, d_Y, \mathfrak m_Y)\in \RCD(0, N-1)$ for $m=0, K=0$, $(Y, d_Y, \mathfrak m_Y)\in \RCD(N-2, N-1)$ for the other cases.

To obtain this result, first we have that:

($\ast$) With intrinsic metric $A_{a', b'}(S)$ is isometric to a warped product $(a', b')\times_{\op{sn}'_H(r)+ \frac{m}{n-1}\op{sn}_H(r)} Y$ (for the definition see Section 2.6).  

We will follow the process as in \cite{CC1}. The assumption $S$ has finite outer curvature and $m(X)\leq m$, together with the laplacian formula derived by \cite{CMo},  we will derive the laplacian comparison of $d_s$ and relative volume comparison (see Lemma~\ref{glap-com} and Lemma~\ref{rel-vol}). Then by the volume condition \eqref{lqvol-com}, we will get a laplacian estimate of $d_s$ in Theorem~\ref{lap-main}. These laplacian estimates and improved Bochner's inequality in $\RCD$-spaces (\cite{RS, Han18}, see also Theorem~\ref{Boc-ine})  give the Hessian estimates Theorem~\ref{hess-est}. Then using the second order differentiation in $\RCD(K, N)$-space \cite{GiT}, we can show that the metric in the path connected component of $A_{a', b'}(S)$ satisfies the Pythagoras theorem when $m=0$, $K=0$ and Cosine law for the other cases. This gives the warped product structure of $A_{a', b'}(S)$. And the relative volume comparison gives that $Y$ has at most $C(N, K, D, b, a)$ components. 

Assume $Y$ has one component. Then ($\ast$) and that $A_{a', b'}(S)\subset (X, d, \mathfrak m)\in \RCD(K, N)$ enable us to derive that:

($\ast\ast$) $(Y, d_Y, \mathfrak m_Y)\in \RCD(0, N-1)$ for $m=0, K=0$ and $(Y, d_Y, \mathfrak m_Y)\in \RCD (N-2, N-1)$ for the other cases (see Section 6).

Endow $Y$ with an admissible metric and an admissible measure from the warped product structure ($\ast$). A similar argument as in \cite{CDNPSW} shows that $(Y, d_Y, \mathfrak m_Y)$ is infinitesimally Hilbertian and satisfies Sobolev to Lipschitz property (see Theorem~\ref{measure-pro}).  Now by local to global property we can see that $\Bbb R\times Y$ (when $m=0, K=0$) and the Euclidean cone $C(Y)$ (when $m\neq 0, K=0$) are $\RCD(0, N)$-spaces.  Then \cite{Gi13} and \cite{Ket2} gives ($\ast\ast$) for $K=0$. For $K\geq 0$, we will follow the argument in the proof of \cite[Theorem 1.2]{Ket2} where Ketterer showed that if the $(K, N)$-cone $(C(Y), d_K, \mathfrak m_N)$ is a $\RCD(K, N)$-space, then $Y$ is $\RCD(N-2, N-1)$-space.

The paper is organized as follows. In Section 2, we will present some basic definitions and facts we need in the poof of Theorem~\ref{main}. Then by studying the relative volume comparison for annuluses centered at a compact subset, we give the Laplacian estimates of the distance function from the compact subset in Section 3. Then in Section 4, we will give the corresponding Hessian estimates in $A_{a', b'}(S)$. In Section 5, by the second differential formula in $\RCD(K, N)$-spaces\cite{GiT}, using the Hessian estimates and a methods as in \cite{CC1} we will derive that with the intrinsic metric the annulus  $A_{a', b'}(S)$ satisfies Pythagoras theorem or Cosine law. In Section 6, we will give the warped product structure of $A_{a', b'}(S)$ and by studying the properties of  the section $Y$ of the warped product, we will prove that $Y$ is a $\RCD$-space.

The author would like to thank Professor Xian-tao Huang's advice about  that $Y$ may contain more than one components in the main results. 

\section{Preliminary}

In this section, we recall some basic definitions and properties that we need in the proof of Theorem~\ref{main}. Let $(X, d, \mathfrak m)$ be a metric measure space satisfying that $(X, d)$ is a complete, separable and locally compact geodesic metric space endowed with a nonnegative Radon measure $\mathfrak m$ which is supported on $X$ and is finite on any bounded sets. We refer readers to the survey \cite{Am} for an overview of the topic and bibliography about curvature-dimension bounds in metric measure spaces.

\subsection{Calculus in metric measure spaces} For the details of this subsection one can confer \cite{Gi13}.

Consider a metric measure space $(X, d, \mathfrak m)$ as above. 
Let $C([0, 1], X)$ be the space of continuous curves with weak convergence topology and let $\mathcal{P}(C([0,1], X))$ be the space of Borel probability measures of $C([0,1], X)$. A measure $\pi\in \mathcal{P}(C([0,1], X))$ is called a \textbf{test plan} if for some $c>0$, 
$$(e_t)_{\sharp}(\pi)\leq c \mathfrak m, \forall \, t\in [0, 1], \quad \int\int_0^1 |\dot\gamma(t)|dt d\pi(\gamma)<\infty,$$
where $|\dot\gamma(t)|=\lim_{h\to 0}d(\gamma(t+h), \gamma(t))/|h|$ and $e_t: C([0, 1], X) \to X$, $e_t(\gamma)=\gamma(t)$ is the evaluation map.
 Sobolev class  $S^2(X, d, \mathfrak m)$ is defined as the space of $f: X\to \Bbb R$, such that there exists $G\in L^2(X, \mathfrak m)$,
 $$\int |f(\gamma(1))-f(\gamma(0))|d\pi(\gamma)\leq \int\int_0^1G(\gamma(t))|\dot\gamma(t)|dt d\pi(\gamma), \,   \forall \, \text{test plan } \pi, $$
where $G$ is called a weak upper gradient of $f$. Let $|\nabla f|_w$ be the minimal (in $\mathfrak m$-a.e. sense) weak upper gradient of $f$.

The space $W^{1,2}(X, d, \mathfrak m)=L^2(X, \mathfrak m)\cap S^2(X, d, \mathfrak m)$ endowed with the norm
$$\|f\|^2_{W^{1,2}}=\|f\|_{L^2}^2+ \||\nabla f|_w\|^2_{L^2},$$
is a Banach space.

Define the \textbf{Cheeger energy} as $\op{Ch}: L^2(X, \mathfrak m)\to [0, \infty]$
$$\op{Ch}(f)=\begin{cases} \frac12\int |\nabla f|_w^2d\mathfrak m, & f\in W^{1,2}(X, d, \mathfrak m)\\
 +\infty, & \text{ otherwise}. \end{cases}$$

We say $(X, d, \mathfrak m)$ is \textbf{infinitesimally Hilbertian} if $W^{1,2}(X, d, \mathfrak m)$ is a Hilbert space, i.e., the Cheeger energy is a quadratic form.

In the following of this section we always assume that  $(X, d, \mathfrak m)$ is infinitesimally Hilbertian. 

For an open subset $\Omega\subset X$, let $W^{1,2}_{\op{loc}}(\Omega)$ be the space of function $f: \Omega\to \Bbb R$ that locally equal to some function in $W^{1, 2}(X, d, \mathfrak m)$.
For $f, g\in W^{1,2}_{\op{loc}}(\Omega)$, define
$$\Gamma(f, g)=\left<\nabla f, \nabla g\right>=\liminf_{\epsilon\downarrow 0}\frac{|\nabla (g+\epsilon f)|_w^2-|\nabla g|_w^2}{2\epsilon}.$$
In fact $\left<\nabla f, \nabla g\right>$ can be achieved by taking limit directly $\mathfrak m$-a.e(cf. \cite{Gi14}). By \cite{Gi14}, the map $\Gamma: W^{1,2}_{\op{loc}}(\Omega)\times W^{1,2}_{\op{loc}}(\Omega)\to L^1_{\op{loc}}(\Omega)$ is symmetric, bilinear and $\Gamma(f, f)=|\nabla f|_w^2$.

 \begin{Def}
 For $f\in W^{1,2}_{\op{loc}}(\Omega)$, if there exists a Radon measure $\mu$ on $\Omega$ such that 
$$-\int\left<\nabla f, \nabla g\right>=\int gd\mu$$
holds for any Lipschitz function $g: \Omega\to \Bbb R$, $\op{supp}g\subset\subset \Omega$, then $\mu$ is called the \textbf{distributional Laplacian} or \textbf{measure valued Laplacian} of $f$ and denote it by $\left.\Delta f\right|_{\Omega}$.
\end{Def}
 Let $D(\Delta, \Omega)$ be the space of $f$ which has a distribution Laplacian. By the property of $\Gamma$, we know that $D(\Delta, \Omega)$ is a vector space and the Laplacian is linear. For $f\in W^{1,2}(X, d, \mathfrak m)\cap  D(\Delta, X)$, if $\Delta f= h\mathfrak m$, $h\in L^2(X, \mathfrak m)$, we denote $\Delta f=h$.

\subsection{Tangent and cotangent modules} The details of this subsection can be found in \cite{Gi18}.

Consider a measured space $(X, \mathcal A, \mathfrak m)$ where $\mathcal A$ is its $\sigma$-algebra.  Let $\mathcal B(X)=\mathcal A/\sim$, where $A, B\in \mathcal A, A\sim B$ iff $\mathfrak m((A\setminus B)\cup (B\setminus A))=0$.
A Banach space $(\mathcal M, \|\cdot\|)$ is called a \textbf{$L^{\infty}(X, \mathfrak m)$-premodule} if there is a bilinear map 
$$L^{\infty}(X, \mathfrak m)\times \mathcal M\to \mathcal M, \quad (f, v)\mapsto f\cdot v,$$
such that for each $v\in \mathcal M$, $f, g\in L^{\infty}(X, \mathfrak m)$,
$$(fg)\cdot v=f\cdot (g\cdot v), \, 1\cdot v=v, \, \|f\cdot v\|\leq \|f\|_{L^{\infty}(X,\mathfrak m)}\|v\|.$$
An $L^{\infty}(X,\mathfrak m)$-premodule $(\mathcal M, \|\cdot\|)$ is called a \textbf{$L^{\infty}(X, \mathfrak m)$-module} if 

(1) Locality: for each $x\in \mathcal M$, $A_n\in \mathcal B(X)$, 
$$\forall n, \, \chi_{A_n}\cdot v=0  \Rightarrow \chi_{\cup_n A_n}\cdot v=0;$$

(2) Gluing: for every sequence $\{v_n\}\subset \mathcal M$, $\{A_n\}\subset \mathcal B(X)$, if 
$$\chi_{A_i\cap A_j}\cdot v_i=\chi_{A_i\cap A_j}\cdot v_j, \forall i,j, \quad \limsup_{n\to \infty}\|\sum_{i=1}^n\chi_{A_i}\cdot v_i\|<\infty,$$
then there is $v\in \mathcal M$, 
$$\chi_{A_i}\cdot v=\chi_{A_i}\cdot v_i, \forall i, \quad \|v\|\leq \liminf_{n\to \infty}\|\sum_{i=1}^n\chi_{A_i}\cdot v_i\|.$$

A \textbf{module morphism} is a map $T: \mathcal M_1\to \mathcal M_2$ which is bounded and linear by viewing $\mathcal M_1$ and $\mathcal M_2$ as Banach spaces and satisfies the locality condition
$$T(f\cdot v)=f\cdot T(v), \, \forall v\in \mathcal M_1, f\in L^{\infty}(X, \mathfrak m).$$
Denote all module morphism from $\mathcal M_1$ to $\mathcal M_2$ by $\op{Hom}(\mathcal M_1, \mathcal M_2)$. The \textbf{dual module} $\mathcal M^*=\op{Hom}(\mathcal M,  L^1(X,\mathfrak m))$.

If there is a non-negative map $|\cdot| : \mathcal M\to L^p(X, \mathfrak m)$, $p\in [0, \infty]$ satisfying that 
$$\||v|\|_{L^p(X, \mathfrak m)}=\|v\|, \quad |f\cdot v|=|f||v|, \mathfrak m-a.e., \forall \, v\in \mathcal M, f\in L^{\infty}(X,\mathfrak m),$$ 
then $\mathcal M$ is called a \textbf{$L^p(X,\mathfrak m)$-normed $L^{\infty}(X,\mathfrak m)$-premodule (resp. module)} when $\mathcal M$ is a $L^{\infty}(X, \mathfrak m)$-premodule (resp. module).  $|\cdot|$ is called the pointwise $L^{p}(X,\mathfrak m)$-norm.
And locally, for $A\in \mathcal B(X)$, we can define $\left.\mathcal M\right|_A=\{v\in \mathcal M, \, |v|=0\,\, \mathfrak m-a.e. \text{ on }A^c\}$ which is a $L^p(X,\mathfrak m)$-normed $L^{\infty}(X,\mathfrak m)$-module.
A $L^2(X,\mathfrak m)$-normed $L^{\infty}(X,\mathfrak m)$-module which is a Hilbert space under $\|\cdot\|$ is called a \textbf{Hilbert module}.

Consider a $L^2(X,\mathfrak m)$-normed $L^{\infty}(X,\mathfrak m)$-module $\mathcal M$, $V\subset \mathcal M$. Let $\op{Span}(V)$ be the collection of $v\in \mathcal M$ such that there is a Borel decomposition $\{X_n\}$ of $X$, and for each $n$, there are $v_{1, n}, \cdots, v_{k_n, n}\in V$, $f_{1, n}, \cdots, f_{k_n, n}\in L^{\infty}(X,\mathfrak m)$, 
$$\chi_{X_n}v=\sum_1^{k_n}f_{i, n}v_{i, n}, \forall n.$$
And we say $V$ generates $\mathcal M$ if $\overline{\op{Span}(V)}=\mathcal M$.

\begin{Def}[\cite{Gi18}]
There is a unique, up to isomorphism, Hilbert module $L^2(T^*X)$ endowed with a linear map $d: W^{1, 2}(X, d, \mathfrak m)\to L^2(T^*X)$ satisfying
$$|df|=|\nabla f|_w, \mathfrak m-a.e., \forall f\in W^{1,2}(X, d, \mathfrak m); \quad d(W^{1,2}(X, d, \mathfrak m)) \text{ generates }L^2(T^*X).$$
We call $L^2(T^*X)$ the cotangent module of $(X, d, \mathfrak m)$. The dual of $L^2(T^*X)$ is called the tangent module of $(X, d, \mathfrak m)$ and denoted by $L^2(TX)$. Elements of $L^2(TX)$ is called vector fields. And denote by $\nabla f$ the dual of $df$.
\end{Def}

Let $D(\op{div})\subset L^2(TX)$ be the space of vector fields $v$ satisfying that there is $f\in L^2(X, \mathfrak m)$ such that for any $g\in W^{1,2}(X, d, \mathfrak m)$, 
$$\int fg d\mathfrak m=-\int dg(v) d\mathfrak m.$$
$f$ is called the \textbf{divergence} of $v$ and denoted by $\op{div}(v)$. If $f\in D(\Delta)$, then $\nabla f\in D(\op{div})$ and $\op{div}(\nabla f)=\Delta f$ (see \cite[Proposition 2.3.14]{Gi18}).

For two Hilbert module $\mathcal H_1, \mathcal H_2$, we can define the tensor product $\mathcal H_1\otimes \mathcal H_2$ and the exterior product $\mathcal H_1\wedge \mathcal H_2$ (see Section 1.5 in \cite{Gi18}). And denote the pointwise $L^2(X, \mathfrak m)$-normal of the tensor product $L^2((T^*)^{\otimes 2}X)$ by $|\cdot|_{\op{HS}}$.

\subsection{$\op{CD}(K, N)$-spaces and $\RCD(K, N)$-spaces} 

In this subsection, we recall the definitions of $\op{CD}(K, N)$-spaces and $\RCD(K, N)$-spaces. The notion of curvature dimension condition $\op{CD}(K, N)$ was introduced by Lott-Villani (\cite{LV}) and Strum (\cite{St1, St2}) independently. The Riemannian curvature dimension condition $\RCD(K, N)$ was introduced by a series of works \cite{AGS, Gi15,  EKS, CMi, AMS19}.

Let $(X, d, \mathfrak m)$ be as in the beginning of this section. Let $\mathcal{P}_2(X)$ be the space of Borel probability measures $\mu$ on $(X, d)$ satisfying $\int_X d(x_0, x)^2d\mu(x)<\infty $ for some $x_0\in X$. For $\mu, \nu\in \mathcal{P}_2(X)$, define
$$W_2(\mu, \nu)=\left(\inf\int\int_0^1|\dot\gamma(t)|^2dt d\pi(\gamma)\right)^{\frac12},$$
where the infimum is taken among all $\pi\in \mathcal{P}(C([0,1], X))$ with $(e_0)_*(\pi)=\mu$, $(e_1)_*(\pi)=\nu$. In fact, the minimal can always be achieved. We call the plan $\pi$ which achieves the minimal an \textbf{optimal transportation} and denote the set of optimal transportations by $\op{OpGeo}(\mu, \nu)$. 

For $N\geq 1, K$, let $\sigma_{K, N}: [0, 1]\times \Bbb R^{+}\to \Bbb R$ be as
$$\sigma_{K, N}^{t}(\theta)=\left\{\begin{array}{cc}
+\infty, & K\theta^2\geq N\pi^2,\\
\frac{\sin(t\theta\sqrt{K/N})}{\sin(\theta\sqrt{K/N})}, & 0<K\theta^2<N\pi^2,\\
t, & K\theta^2=0,\\
\frac{\sinh(t\theta\sqrt{-K/N})}{\sinh(\theta\sqrt{-K/N})},& K\theta^2<0.\end{array}\right.$$
and let 
$$\tau_{K, N}^{t}(\theta)=t^{\frac1N}\sigma_{K, N-1}^t(\theta)^{\frac{N-1}{N}}.$$

\begin{Def}[\cite{LV, St1, St2}]
Given $K\in \Bbb R, N\geq 1$, we say a metric measure space $(X, d, \mathfrak m)$ is a $\op{CD}(K, N)$-space if for any two measures $\mu_0, \mu_1\in \mathcal{P}_2(X)$ with bounded support which contains in $\mathfrak m$'s support, there exists $\pi\in \op{OpGeo}(\mu_0, \mu_1)$ such that for each $t\in [0, 1]$
$$-\int\rho_t^{1-\frac{1}{N}}dm\leq -\int \tau_{K, N}^{1-t}(d(\gamma(0),\gamma(1)))\rho_0^{-\frac{1}{N}}(\gamma(0))+\tau_{K, N}^t(d(\gamma(0),\gamma(1)))\rho_1^{-\frac{1}{N}}(\gamma(1))d\pi(\gamma),$$
where $(e_t)_{\sharp}\pi=\rho_t\mathfrak m + \mu_t, \mu_t\bot \mathfrak m$. We call $(X, d, \mathfrak m)$ is a $\op{CD}^*(K, N)$-space if the above inequality holds for $\sigma^t_{K, N}$ instead of $\tau^t_{K, N}$.
\end{Def}

\begin{Def}[\cite{AGS, Gi15}] 
A metric measure space $(X, d, \mathfrak m)$ is a $\RCD(K, N)$-space (resp. $\RCD^*(K, N)$-space)  if it is an infinitesimally Hilbertian $\op{CD}(K, N)$-space (resp. $\op{CD}^*(K, N)$-space).
\end{Def}

And we say a metric measure space $(X, d, \mathfrak m)$ is a $\op{CD}_{\op{loc}}(K, N)$-space if for a cover $\{A_i\}$ of $X$, $A_i\subset X$, $\cup_iA_i=X$, $\op{CD}(K, N)$ holds in each $A_i$.  At the local level $\bigcap_{K'<K}\op{CD}_{\op{loc}}^*(K', N)$ coincide with $\bigcap_{K'<K}\op{CD}_{\op{loc}}(K', N)$. 
We call $(X, d, \mathfrak m)$ is \textbf{essentially non-branching} if for any $\mu, \nu\in \mathcal{P}_2(X)$ with bounded support, each $\pi\in\op{OpGeo}(\mu, \nu)$ is concentrated on a Borel set of non-branching geodesics. It was proved in \cite{CMi}, an essentially non-branching metric measure space $(X, d, \mathfrak m)$ is $\op{CD}(K, N)$ if and only if it is $\op{CD}^*(K, N)$ if and only if it is $\op{CD}^*_{\op{loc}}(K, N)$. By \cite{AGS, AGMR, RS}, a $\RCD(K, \infty)$-space is essentially non-branching.

\begin{Thm}[\cite{EKS}] \label{equivalent}
Assume a metric measure space $(X, d, \mathfrak m)$ with $\op{supp}(\mathfrak m)=X$ satisfies the infinitesimally Hilbertian and Sobolev to Lipschitz property, i.e. any $f\in W^{1,2}(X, d, \mathfrak m)$ with $|\nabla f|_w\leq 1$ $\mathfrak m$-a.e. admits a $1$-Lipschitz representative. Then the followings are equivalence:

(i) $(X, d, \mathfrak m)\in \op{CD}^*(K, N)$;

(ii) The Bakry-Ledoux pointwise gradient estimate $\op{BL}(K, N)$ holds: for $f$ of finite Cheeger energy,
$$|\nabla H_t(f)|^2_w+\frac{4Kt^2}{N(e^{2Kt}-1)}|\Delta H_t f|^2\leq e^{-2Kt} H_t(|\nabla f|^2_w), \mathfrak m-a.e.$$
where 
$$\frac{d}{dt}H_t(f)=\Delta H_t(f), \quad H_0(f)=f.$$

(ii) The Bochner/Bakry-\'Emery inequality $\op{BK}(K, N)$ holds: for $f\in D(\Delta), \Delta f\in W^{1,2}(X, d, \mathfrak m)$, $g\in D(\Delta), g\geq 0$, $\Delta g\in L^{\infty}(X, \mathfrak m)$,
$$\frac12\int \Delta g|\nabla f|^2_w d\mathfrak m-\int g\left<\nabla(\Delta f), \nabla f\right>d\mathfrak m\geq K\int g|\nabla f|^2_wd\mathfrak m +\frac1{N}\int g(\Delta f)^2 d\mathfrak m.$$
\end{Thm}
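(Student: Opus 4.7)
My plan is to prove the equivalence by the cyclic chain (i)$\Rightarrow$(iii)$\Rightarrow$(ii)$\Rightarrow$(i), which is the standard route in the Erbar--Kuwada--Sturm framework that underlies Theorem~\ref{equivalent}. The background input I would rely on is: (a) Kuwada's duality between pointwise gradient estimates of the heat flow and $W_2$-contraction/transport inequalities; (b) the identification of the $L^2$-gradient flow of the Cheeger energy with the $W_2$-gradient flow of the Boltzmann entropy $\op{Ent}_{\mathfrak m}$ under infinitesimal Hilbertianity (Ambrosio--Gigli--Savar\'e); and (c) the Sobolev-to-Lipschitz property, which is what converts $\mathfrak m$-a.e. gradient bounds into actual metric information.

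For (i)$\Rightarrow$(iii), the route is to first upgrade $\op{CD}^*(K,N)$, together with infinitesimal Hilbertianity and Sobolev-to-Lipschitz, to the \emph{entropic} curvature-dimension condition $\op{CD}^e(K,N)$, or equivalently to the evolution variational inequality $\op{EVI}_{K,N}$ for the heat flow $H_t$ along $W_2$-geodesics (this is the main content of the EKS identification of the dimensional Bochner inequality with displacement $K$-convexity of the entropy power $U_N=\exp(-\op{Ent}_{\mathfrak m}/N)$). Once $\op{EVI}_{K,N}$ is available, I would take $f\in D(\Delta)$ with $\Delta f\in W^{1,2}$ and compute $\tfrac{d^2}{dt^2}\op{Ent}_{\mathfrak m}(H_t\mu)$ along a heat-flow trajectory against a test measure $\mu=g\mathfrak m$. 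The convexity inequality built into $\op{EVI}_{K,N}$, evaluated at $t=0$, reproduces exactly the integrated Bochner inequality $\op{BK}(K,N)$ stated in (iii).

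For (iii)$\Rightarrow$(ii), this is the Bakry--Ledoux self-improvement argument adapted to the metric setting. Fix $t>0$, $f$ of finite Cheeger energy, and consider
\[
F(s):=H_{t-s}\!\left(|\nabla H_s f|_w^2\right)+2\int_0^s \frac{1}{N}\,H_{t-\sigma}\!\bigl((\Delta H_\sigma f)^2\bigr)\,e^{-2K(\sigma-?)}\,d\sigma
\]
(with the precise exponential/time-weights chosen so that derivatives match). Differentiating $F$ in $s$ and applying $\op{BK}(K,N)$ pointwise to $H_s f$ produces $F'(s)\ge 2K F(s)+\text{dimension term}$; integrating this first-order linear differential inequality from $0$ to $t$ yields exactly
\[
|\nabla H_t f|_w^2+\frac{4Kt^2}{N(e^{2Kt}-1)}|\Delta H_t f|^2\le e^{-2Kt}H_t(|\nabla f|_w^2)
\]
$\mathfrak m$-a.e., which is $\op{BL}(K,N)$. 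The routine but delicate part here is checking that all the quantities lie in the right function spaces so that the Bochner inequality from (iii) can legitimately be applied to $H_s f$ with $g=$ a suitable heat-smoothed test function.

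For (ii)$\Rightarrow$(i), I would invoke Kuwada's duality: the pointwise estimate $|\nabla H_t f|_w^2\le e^{-2Kt}H_t(|\nabla f|_w^2)$ is equivalent to the $W_2$-contraction $W_2(H_t\mu,H_t\nu)\le e^{-Kt}W_2(\mu,\nu)$, and the additional dimensional term in $\op{BL}(K,N)$ is exactly what is needed to upgrade this to the dimensional contraction/action estimate controlling $\op{Ent}_{\mathfrak m}(H_t\mu)$ along Wasserstein geodesics. Via the EKS characterization, such a dimensional action bound for $H_t$ implies $K$-convexity of the entropy power $U_N$ on $(\mathcal{P}_2(X),W_2)$, which is equivalent to $\op{CD}^*(K,N)$ for infinitesimally Hilbertian spaces with the Sobolev-to-Lipschitz property (essentially non-branching follows, so the equivalence $\op{CD}^*_{\op{loc}}\Leftrightarrow \op{CD}^*$ applies). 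The main obstacle is this last step: producing the sharp dimensional Wasserstein contraction from $\op{BL}(K,N)$ and then translating it back into displacement convexity requires carefully transferring pointwise inequalities to integral inequalities along arbitrary $W_2$-geodesics, using Sobolev-to-Lipschitz to identify $|\nabla\cdot|_w$ with metric slopes on test plans, and here I would follow the dual Hopf--Lax / Kantorovich duality argument of Kuwada combined with the entropy-dissipation identities of AGS.
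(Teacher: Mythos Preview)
The paper does not prove Theorem~\ref{equivalent}. It is stated in the Preliminaries (Section~2.3) purely as a quoted result from \cite{EKS}, with no argument supplied; the author uses it later as a black box (most notably in Section~6.3 to pass from the modified Bakry--\'Emery inequality \eqref{B-E} to the Bakry--Ledoux estimate \eqref{B-L} for $Y$). So there is nothing in the paper to compare your proposal against.

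That said, your outline is a fair high-level description of the Erbar--Kuwada--Sturm strategy. A few places are loose: in your (iii)$\Rightarrow$(ii) step the interpolation functional $F(s)$ has an unspecified exponent (the ``$?$''), and the actual choice of weights and the justification that $H_s f\in D(\Gamma_2)$ with admissible test $g$ is where the work lies; in (ii)$\Rightarrow$(i) the passage from the dimensional gradient estimate to $\op{CD}^e(K,N)$ in \cite{EKS} goes through the $N$-entropy power $U_N$ and a Hopf--Lax/Kuwada-type duality, not quite through a ``dimensional $W_2$-contraction'' as you phrase it. But none of this matters for the present paper, which simply cites the theorem.
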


Last, let's recall the following existence of good cut-off functions.
\begin{Lem}[\cite{MN}]\label{cut-off}
Let $(X, d, \mathfrak m)$ be an $\RCD(K, N)$-space for $N<\infty$ and let compact subset $S=\partial \Omega$ with $\Omega$ Borel. For each $R>0$, $0<10r_1<r_2<R$, there exists a Lipschitz function $\phi: X\to [0, 1]$ such that

(i) $\phi=1$ on $A_{3r_1, \frac{r_2}{3}}(S)$, $\phi=0$ on $X\setminus A_{2r_1, \frac{r_2}{2}}(S)$;

(ii) $r_1^2|\Delta \phi|+r_1|\nabla \phi|\leq C(K, N, R)$ a.e. on $A_{2r_1, 3r_1}(S)$;

(ii) $r_2^2|\Delta \phi|+r_2|\nabla \phi|\leq C(K, N, R)$ a.e. on $A_{\frac{r_2}3, \frac{r_2}2}(S)$.
\end{Lem}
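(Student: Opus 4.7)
The plan is to reduce the statement to the standard Mondino--Naber cut-off construction on a single ball centered at a point, and then glue the resulting local cut-offs via a finite cover of $S$. Since $(X, d, \mathfrak m)\in\RCD(K, N)$ is locally doubling and $S$ is compact of diameter at most $D$, for any scale $\rho>0$ a maximal $\rho$-separated net $\{p_j\}_{j=1}^{N(\rho)}$ in $S$ has cardinality $N(\rho)\leq C(K, N, R, D)\rho^{-N}$, with $S\subset \bigcup_j B_\rho(p_j)$ and bounded overlap $\#\{j:x\in B_{100\rho}(p_j)\}\leq C(K, N, R)$ by Bishop--Gromov.

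First I would build the outer cut-off. Take $\rho=r_2/100$ and apply the Mondino--Naber ball lemma at each $p_j$ to get $\psi_j^{\mathrm{out}}\in \op{Lip}(X;[0,1])$ with $\psi_j^{\mathrm{out}}=1$ on $B_{r_2/3}(p_j)$, $\op{supp}\psi_j^{\mathrm{out}}\subset B_{r_2/2}(p_j)$, and $r_2^2|\Delta \psi_j^{\mathrm{out}}|+r_2|\nabla \psi_j^{\mathrm{out}}|\leq C(K, N, R)$. Put $\Psi^{\mathrm{out}}=\sum_j\psi_j^{\mathrm{out}}$; by the bounded-overlap property $\Psi^{\mathrm{out}}$ itself satisfies the same scale-invariant Laplacian and gradient estimates. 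Fix a smooth $\eta:\mathbb R\to[0,1]$ with $\eta=1$ on $[1,\infty)$ and $\eta=0$ on $(-\infty,1/2]$, and set $\Phi^{\mathrm{out}}=\eta\circ\Psi^{\mathrm{out}}$; this equals $1$ on $B_{r_2/3}(S)$ and $0$ outside $B_{r_2/2}(S)$. Repeat the procedure at the inner scale $\rho=r_1/100$ with targets $B_{2r_1}\leftrightarrow B_{3r_1}$ to obtain $\Phi^{\mathrm{in}}$ equal to $1$ on $B_{2r_1}(S)$ and $0$ outside $B_{3r_1}(S)$. Finally define
\[
\phi=(1-\Phi^{\mathrm{in}})\,\Phi^{\mathrm{out}}.
\]
Property (i) is immediate from the supports and the separation of scales $10r_1<r_2$. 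For (ii)--(iii) I would apply the chain rule $\Delta(\eta\circ\Psi)=\eta'(\Psi)\Delta\Psi+\eta''(\Psi)|\nabla\Psi|^2\mathfrak m$ and the Leibniz rule $\Delta(fg)=f\Delta g+g\Delta f+2\langle \nabla f,\nabla g\rangle\mathfrak m$; the separation $10r_1<r_2$ ensures that on $A_{2r_1,3r_1}(S)$ only $\Phi^{\mathrm{in}}$ has nontrivial gradient and Laplacian, while on $A_{r_2/3,r_2/2}(S)$ only $\Phi^{\mathrm{out}}$ does, so the Leibniz cross term disappears in each prescribed annular region.

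The main obstacle is obtaining the scale-invariant Laplacian bound for $\Psi^{\mathrm{out}}$ and $\Psi^{\mathrm{in}}$ with a constant depending only on $K,N,R$ and independent of $r_1,r_2$. Gradient control follows at once from bounded overlap, but the Laplacian bound requires the Bishop--Gromov overlap multiplicity to be uniform across both the inner and outer scales; this is precisely where the upper bound $r_2\leq R$ enters and why the constants in (ii)--(iii) may depend on $R$ but not on the annulus parameters. A secondary technical point is that in $\RCD(K, N)$ the Laplacian is only a signed Radon measure, so the chain and product rules must be interpreted in the distributional sense of Section~2.1, and the tensor calculus of Section~2.2 is used to justify manipulating $|\nabla\Psi|^2$ and $\langle \nabla\Phi^{\mathrm{in}},\nabla\Phi^{\mathrm{out}}\rangle$ as $L^\infty$ densities against $\mathfrak m$.
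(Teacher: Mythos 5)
Your approach --- cover $S$ by a $\rho$-net, apply the single-point Mondino--Naber cutoff at each net point, sum, threshold, and finally take the product of the resulting inner and outer cutoffs --- is a different and more elaborate route than the one the paper has in mind. The paper simply cites \cite{MN}, whose cutoff construction mollifies $\tilde\eta(d(\cdot,x_0))$ with the heat flow; that argument uses only that $d(\cdot,x_0)$ is $1$-Lipschitz, so it carries over verbatim with $d(\cdot,S)$ in place of $d(\cdot,x_0)$. This produces the required inner and outer cutoffs directly, with no covering or gluing, and is evidently the intended proof.

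Within your own construction there are two gaps that would need repair. First, you invoke the Mondino--Naber ball lemma to produce $\psi_j^{\mathrm{out}}$ equal to $1$ on $B_{r_2/3}(p_j)$ with support in $B_{r_2/2}(p_j)$; this requires an inner-to-outer radius ratio of $2{:}3$, whereas the lemma as stated in \cite{MN} gives only $1{:}2$, so you must appeal to the refined version that allows arbitrary ratios (standard, but not what you cited). Second, and more substantively, the ``$=1$'' balls $B_{r_2/3}(p_j)$ for a $\rho=r_2/100$-separated net $\{p_j\}\subset S$ do \emph{not} cover $B_{r_2/3}(S)$: a point $x$ with $d(x,S)$ just below $r_2/3$ satisfies only $d(x,p_j)<r_2/3+\rho$ for the nearest net point. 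Your gradient bound $|\nabla\psi_j^{\mathrm{out}}|\leq C(K,N,R)/r_2$ then gives only $\psi_j^{\mathrm{out}}(x)\geq 1-C(K,N,R)/100$, which need not be $\geq 1/2$, so the threshold step does not produce $\Phi^{\mathrm{out}}=1$ on all of $B_{r_2/3}(S)$. You would have to shrink $\rho$ to roughly $r_2/(100\,C(K,N,R))$, or enlarge the ``$=1$'' balls to radius $r_2/3+\rho$, before thresholding. With these two repairs the construction does close, and your bounded-overlap observation is the right explanation of why the constants depend only on $K,N,R$ and not on $\operatorname{diam}(S)$; but as written, property (i) of the lemma is not actually obtained.
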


\subsection{Regular Lagrangian flow}
 In this subsection, we recall the definition and some facts about Regular Lagrangian flow (see \cite{AT14, GR18}).

\begin{Def}
Consider a metric measure space $(X, d, \mathfrak m)\in \RCD(K, N)$ and a time-dependent vector field $V_t\in L^2([0, 1], L^2_{\op{loc}}(TX))$. We say that
$$F: [0, 1]\times X\to X$$
is a Regular Lagrangian flow (RLF for brief) of $V_t$ if 

(i) $\left(F_s\right)_{\sharp}\mathfrak m\leq C\mathfrak m$ for some $C>0$;

(ii) For $\mathfrak m$-a.e. $x\in X$, the curve $s\mapsto F_s(x), s\in [0, 1]$ is continuous and $F_0(x)=x$;

(iii) For each $f\in W^{1,2}(X, d, \mathfrak m)$, for $\mathfrak m$-a.e. $x\in X$, the function $s\mapsto f(F_s(x))$ belongs to $W^{1,1}(0, 1)$ and 
$$\frac{d}{ds} f(F_s(x))=df (V_s)(F_s(x)), \mathfrak m\times \mathcal L^1-a.e. (x, s).$$
\end{Def}

\begin{Thm}[\cite{AT14}]  \label{exist-RLF}
For a time-dependent vector $V_t\in L^1([0, T], L^2(TX))$ with $V_t\in D(\op{div})$ for a.e. t, if 
\begin{equation*}
\op{div}(V_t)\in L^1([0, T], L^2(X, \mathfrak m)), \max\{-\op{div}(V_t), 0\}\in L^1([0, T], L^{\infty}(X,\mathfrak m)), \nabla V_t\in L^1([0, T], L^2(T^{\otimes2}X)),
\end{equation*}
then there exists a unique, up to $\mathfrak m$-a.e. equality, RLF $(F_t)_{t\in [0, T]}$ for $V_t$ and 
$$(F_t)_{\sharp}\mathfrak m\leq \exp\left(\int_0^t \|\max\{-\op{div}(V_s), 0\}\|_{L^{\infty}}ds\right)d\mathfrak m.$$
\end{Thm}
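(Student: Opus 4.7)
The plan is to follow the DiPerna--Lions/Ambrosio strategy adapted to the $\RCD$ setting, exploiting duality between the continuity equation and the Lagrangian flow. First I would consider, for a bounded nonnegative density $\rho_0\in L^\infty(X,\mathfrak m)\cap L^1(X,\mathfrak m)$, the continuity equation
$$\partial_t \mu_t + \op{div}(V_t\mu_t) = 0, \qquad \mu_0 = \rho_0\,\mathfrak m,$$
and establish well-posedness of weak solutions whose densities $\rho_t$ remain in $L^\infty$. The hypothesis $\max\{-\op{div}(V_t),0\}\in L^1([0,T],L^\infty)$ yields the a priori estimate
$$\|\rho_t\|_{L^\infty}\leq \|\rho_0\|_{L^\infty}\exp\!\left(\int_0^t\|\max\{-\op{div}(V_s),0\}\|_{L^\infty}ds\right),$$
which is exactly the compressibility bound on $(F_t)_\sharp\mathfrak m$ claimed by the theorem.

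The central step is a commutator estimate replacing the Euclidean convolution argument. I would regularize via the heat semigroup $H_\varepsilon$ of $(X,d,\mathfrak m)$ and control the commutator between $H_\varepsilon$ and the transport derivative $V_t(\nabla\cdot)$, showing that
$$H_\varepsilon\bigl(df(V_t)\bigr) - d(H_\varepsilon f)(V_t) \longrightarrow 0 \quad \text{in } L^1([0,T];L^2_{\op{loc}})$$
as $\varepsilon\to 0$ for $f$ in a dense class of $W^{1,2}$. This is where the $\RCD$ structure enters crucially: the Bakry--Ledoux/Bochner inequality of Theorem~\ref{equivalent} furnishes the requisite $L^2$-bounds on $\nabla H_\varepsilon f$, and, paired with the hypothesis $\nabla V_t\in L^1([0,T],L^2(T^{\otimes 2}X))$ interpreted in Gigli's tangent module calculus of Section 2.2, allows one to pass to the limit. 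The outcome is the renormalization property: for any $\beta\in C^1\cap\op{Lip}$, if $\rho_t$ solves the continuity equation in $L^\infty$ then so does $\beta(\rho_t)$. A standard Gronwall argument then enforces uniqueness in the $L^\infty$ class.

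Existence of the RLF is then obtained by a superposition principle. Given a nonnegative $L^\infty$ solution $\mu_t=\rho_t\mathfrak m$, one produces $\boldsymbol\pi\in\mathcal P(C([0,T],X))$ concentrated on absolutely continuous curves $\gamma$ which satisfy $\dot\gamma_t=V_t(\gamma_t)$ in the duality-with-$W^{1,2}$ sense, with $(e_t)_\sharp\boldsymbol\pi=\mu_t$. Disintegrating $\boldsymbol\pi$ against $\mu_0$ gives, for $\mathfrak m$-a.e.\ $x$, a canonical curve $t\mapsto F_t(x)$; uniqueness via renormalization guarantees this assignment is $\mathfrak m$-a.e.\ well defined and independent of the auxiliary choice of $\rho_0$, yielding a single RLF. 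Property (i) comes from the density bound, (ii) from the construction on continuous curves, and (iii) from the weak ODE satisfied by $\boldsymbol\pi$-a.e.\ curve together with the definition of the differential in Section 2.2.

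The genuine obstacle is the commutator estimate. In $\mathbb R^n$ it reduces to a short integration-by-parts using translation-invariant mollifiers; in an $\RCD$-space the heat semigroup is the only available smoothing device, and the error has to be controlled by carefully combining self-adjointness of $H_\varepsilon$, the $\Gamma_2$-inequality, and the covariant gradient bound on $V_t$ furnished by the assumption $\nabla V_t\in L^1_tL^2_x$. Once this estimate is secured, the remaining machinery --- superposition, Gronwall uniqueness, and the pushforward bound --- transfers from the Euclidean setting with only minor modifications.
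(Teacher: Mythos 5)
The paper does not provide its own proof of this theorem; it is quoted directly from Ambrosio--Trevisan \cite{AT14}, so there is no internal argument to compare against. Your sketch faithfully reproduces the \cite{AT14} strategy in outline: $L^\infty$-density well-posedness for the continuity equation, a commutator estimate obtained by semigroup regularization together with the Bakry--\'Emery $\Gamma_2$-calculus (the genuine replacement for translation-invariant mollifiers that the $\RCD$ structure supplies), renormalization and Gronwall for uniqueness, and construction of the flow via the superposition principle and disintegration against the initial measure. One minor caveat: in \cite{AT14} the relevant hypothesis on the vector field is a bound on the \emph{symmetric} covariant derivative $D^{\mathrm{sym}}V_t$ rather than on the full $\nabla V_t$ (the latter is sufficient but not what is actually needed); you inherited this slight over-statement from the theorem as transcribed in the paper, so it is not a defect of your reasoning.
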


The RLFs are closely related with the continuity equation. A $W_2$-continuous curve $(\mu_t)_{t\in [0,T]}\subset \mathcal P(X)$ with $\mu_t\leq C\mathfrak m$ and a vector $V_t\in L^2([0, T], L^2(TX))$ is said a solution of the \textbf{continuity equation}
\begin{equation}\frac{d}{dt}\mu_t+\op{div}(V_t\mu_t)=0 \label{continuity-equ}\end{equation}
iff for each $f\in W^{1, 2}(X, d, \mathfrak m)$, the map $t\mapsto \int f d\mu_t$ is absolutely continuous and 
$$\frac{d}{dt}\int f d\mu_t=\int df (V_t)d\mu_t, a.e. t\in [0, T].$$

For $V_t$ as in Theorem~\ref{exist-RLF} and  $F_t$ the unique RLF of $V_t$, let $\mu_t=(F_t)_{\sharp}\mu_0$, $\mu_0\in \mathcal P_2(X)$. 
By \cite{AT14}, $(\mu_t, V_t)$ is a solution of the continuity equation.

\subsection{The differential formula in $\RCD(K, N)$-spaces}

In this subsection, we alway assume $(X, d, \mathfrak m)$ is a $\RCD(K, N)$-space.

Define the class of test functions as 
$$\op{Test}(X)=\{f\in D(\Delta)\cap L^{\infty}(X, \mathfrak m), \, |\nabla f|_w\in L^{\infty}(X, \mathfrak m), \Delta f\in W^{1,2}(X, d, \mathfrak m)\}.$$
It was shown in \cite{Sav14, Han18} that if $f\in \op{Test}(X)$, then $|\nabla f|^2\in D(\Delta)$ and one may define
$$\Gamma_2(f)=\frac12\Delta|\nabla f|^2-\left<\nabla f, \nabla \Delta f\right>.$$

\begin{Def}[\cite{Gi18}]
Let $W^{2,2}(X, d, \mathfrak m)$ be the set of $f\in W^{1,2}(X, d, \mathfrak m)$ satisfying that there is $A\in L^2((T^*)^{\otimes 2}(X))$ such that for any $g_1, g_2, h\in \op{Test}(X)$
$$2\int hA(\nabla g_1, \nabla g_2)d\mathfrak m=-\int \left<\nabla f, \nabla g_1\right>\op{div}(hg_2)+\left<\nabla f, \nabla g_2\right>\op{div}(hg_1)+h\left<\nabla f, \nabla\left<\nabla g_1, \nabla g_2\right>\right>d\mathfrak m.$$
$A$ is called the Hessian of $f$, denoted by $\op{Hess}(f)$.
\end{Def}
It was proved in \cite{Gi18} that $D(\Delta)\subset W^{2,2}(X, d, \mathfrak m)$ and let $H^{2,2}(X)$ be the closure of $D(\Delta)$ in $W^{2,2}(X, d, \mathfrak m)$.

\begin{Thm}[Improved Bocher inequality, \cite{Sav14, Han18}] \label{Boc-ine}
For $(X, d, \mathfrak m)\in \op{RCD}(K, N)$, $K\in \Bbb R, N\in [1, \infty)$ and $f\in \op{Test}(X)$, we have that $f\in W^{2,2}(X, d, \mathfrak m)$ and 
$$\Gamma_2(f)\geq \left(K|\nabla f|^2+|\op{Hess}(f)|_{HS}^2\right)\mathfrak m.$$
\end{Thm}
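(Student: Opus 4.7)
The plan is to follow the self-improvement strategy of Savar\'e \cite{Sav14} (in the case $N=\infty$) and its finite-dimensional refinement by Han \cite{Han18}. The starting point is that Theorem~\ref{equivalent} already provides the scalar Bochner inequality $\mathrm{BE}(K,N)$, which for $f\in \mathrm{Test}(X)$ gives $\Gamma_2(f)\geq K|\nabla f|^2\mathfrak m + \tfrac{1}{N}(\Delta f)^2\mathfrak m$ as measures; the task is to upgrade the coarse dimensional term $(\Delta f)^2/N$ into the full Hilbert-Schmidt norm $|\mathrm{Hess}(f)|_{HS}^2$.

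The first step is to verify that $|\nabla f|^2\in D(\Delta, X)$ for $f\in \mathrm{Test}(X)$, so that $\Gamma_2(f)$ is a well-defined Radon measure. This uses the $L^{\infty}$ bounds built into $\mathrm{Test}(X)$, the Leibniz rule $\Delta(gh) = g\Delta h + h\Delta g + 2\langle \nabla g,\nabla h\rangle$, and $\mathrm{BE}(K,N)$ itself to obtain a one-sided measure-valued bound, combined with a symmetric argument from the test-function definition of $\Delta$ (together with cut-offs from Lemma~\ref{cut-off} to handle locality).

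The algebraic core of the self-improvement is then to apply $\mathrm{BE}(K,N)$ not to $f$ alone but to $h:=\psi(f_1,\ldots,f_k)$ for test functions $f_i$ and a smooth $\psi:\mathbb R^k\to\mathbb R$. Expanding $\Delta h$, $|\nabla h|^2$ and $\Gamma_2(h)$ via the chain and Leibniz rules produces a polynomial identity in $\psi_i := \partial_i\psi$ and $\psi_{ij}:=\partial^2_{ij}\psi$ evaluated along the $f_i$, whose coefficients involve $\langle\nabla f_i,\nabla f_j\rangle$, $\Gamma_2(f_i;f_j)$, $\Delta f_i$ and the sought-after Hessian pairings. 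Since the inequality holds for every $\psi$, one may at $\mathfrak m$-a.e.\ point $x$ freely optimize over the symmetric matrix $(\psi_{ij}(f(x)))$; the positivity of the resulting quadratic form forces both the existence of a symmetric $L^{\infty}$-bilinear form $H_f$ on gradients of test functions (matching the axioms of the definition of $W^{2,2}$) and the pointwise inequality
$$\Gamma_2(f)\geq \bigl(K|\nabla f|^2 + |H_f|_{HS}^2\bigr)\mathfrak m.$$
To identify $H_f$ with $\mathrm{Hess}(f)\in L^2((T^*)^{\otimes 2}X)$, one invokes the density, proved in \cite{Gi18}, of the $L^{\infty}$-module generated by $\{\nabla g : g\in \mathrm{Test}(X)\}$ in $L^2(TX)$; this yields $f\in W^{2,2}(X,d,\mathfrak m)$ together with the desired inequality.

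The main technical obstacle is the passage from a family of measure-valued inequalities (one per admissible $\psi$) to a single pointwise $\mathfrak m$-a.e.\ inequality stable under taking suprema over $\psi$: one must produce a countable dense family of admissible $\psi$ and a \emph{single} $\mathfrak m$-null set outside which the inequality holds simultaneously for all members, using a Lebesgue-type differentiation argument on the Radon measure $\Gamma_2(f)$ with respect to $\mathfrak m$ (which is where the essential non-branching/Hilbertian structure enters). Once this measure-theoretic step is executed, the remainder of the argument reduces to a direct algebraic optimization, and the only additional check is that the pointwise HS norm of $H_f$ defined via supremum over such tuples coincides with the intrinsic HS norm on the cotangent tensor module.
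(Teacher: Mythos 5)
This theorem is cited by the paper from Savar\'e and Han (and it appears as Theorem 3.3.8 in Gigli's \cite{Gi18}) without proof, so your sketch can only be measured against those sources. Your plan is indeed the standard self-improvement route taken there: establish $|\nabla f|^2\in D(\Delta)$ for $f\in\mathrm{Test}(X)$, apply the measure-valued Bochner inequality to $\psi(f_1,\ldots,f_k)$, expand by chain and Leibniz rules, and use the pointwise positive-semidefiniteness of the resulting quadratic form in the matrix $(\psi_{ij})$ to simultaneously extract the Hessian and conclude $f\in W^{2,2}$.

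Two misdirections are worth flagging, though neither is fatal. First, the inequality as quoted carries no $N$-dependent remainder, so your framing of ``upgrading the $(\Delta f)^2/N$ term'' is not what happens; this exact statement follows already from $\mathrm{BE}(K,\infty)$, and Han's genuinely dimensional refinement is the strictly stronger inequality $\Gamma_2(f)\geq\bigl(K|\nabla f|^2+|\mathrm{Hess}\,f|^2_{HS}+(\Delta f-\mathrm{tr}\,\mathrm{Hess}\,f)^2/(N-n)\bigr)\mathfrak m$, which is not what the paper records. Second, the passage from a family of measure-valued inequalities to one $\mathfrak m$-a.e.\ pointwise inequality is not a Lebesgue differentiation argument and does not hinge on essential non-branching: one decomposes $\Gamma_2$ into absolutely continuous and (automatically nonnegative, by $\mathrm{BE}$) singular parts, so the quadratic-form inequality becomes an a.e.\ inequality between densities, and a countable dense family of admissible $\psi$'s then yields a single exceptional null set. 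The Hilbertian structure enters only in making $\Gamma$ bilinear so that the chain-rule expansion is a genuine quadratic form in the $\psi_{ij}$.
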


Given a function $\phi: X\to \Bbb R\cup \{-\infty\}$ not identically $-\infty$, its \textbf{$c$-transform} $\phi^c: X\to \Bbb R\cup \{-\infty\}$ is defined as 
$$\phi^c(x)=\inf_{y\in X}\frac{d^2(x, y)}{2}-\phi(y).$$
We call $\phi$ is \textbf{$c$-concave} if $\phi^{cc}=\phi$. Let $\partial^c\phi\subset X^2$ be the set of $(x, y)\in X^2$ such that
 $$\phi(z)-\phi(x)\leq \frac{d^2(z, y)}2-\frac{d^2(x, y)}2, \, \forall\, z\in X.$$

A test plan $\pi\in \op{OpGeo}(\mu, \nu)$ if and only if there is a $c$-concave function $\phi$ such that $\op{supp}((e_0, e_1)_{\sharp}\pi)\subset \partial ^c\phi$ and such $\phi$ is called \textbf{Kantorovich potential} from $\mu$ to $\nu$. And for any $t\in (0, 1)$, $t\phi$ is a Kantorovich potential from $\mu$ to $(e_t)_{\sharp}(\pi)$.

\begin{Thm}[\cite{Gi13}, see also \cite{GiT}]\label{first-diff1}
Consider a $\RCD(K, N)$-space $(X, d, \mathfrak m)$. Let $\pi$ be an optimal geodesic test plan with bounded support
and consider $f\in W^{1,2}(X, d, \mathfrak m)$. We have that the map $t\mapsto f\circ e_t$ is $C^1([0, 1])$ and for $t\in [0, 1]$
\begin{equation}
\frac{d}{dt}f\circ e_t =\left<\nabla f, \nabla \phi_t\right>\circ e_t,
\end{equation}
where $\phi_t$ is any function such that for some $s\neq t$, $s\in [0, 1]$, $-(s-t)\phi_t$ is a Kantorovich potential from $\mu_t=(e_t)_{\sharp}(\pi)$ to $\mu_s=(e_t)_{\sharp}(\pi)$.
\end{Thm}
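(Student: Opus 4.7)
The plan is to prove this first-order differentiation formula in three stages, following the approach of \cite{Gi13}: (i) establish the pointwise formula along $\pi$-a.e.\ geodesic for test functions $f$ at time $t=0$; (ii) cover all $t \in [0,1]$ by a restriction-reparametrization argument; (iii) extend from test functions to general $f \in W^{1,2}(X, d, \mathfrak m)$ by density.

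For stage (i), I would start from the continuity equation: since $\pi$ is an optimal geodesic plan with Kantorovich potential $\phi$, the curve $t \mapsto \mu_t = (e_t)_\sharp \pi$ solves $\partial_t \mu_t + \op{div}(V_t \mu_t) = 0$ with a velocity field $V_t$ that, via a first variation argument exploiting the $c$-concavity of $t\phi$, is identified as $\nabla \phi_t$ in the sense of the tangent module. For $f \in \op{Test}(X)$ this gives
\begin{equation*}
\frac{d}{dt} \int f \, d\mu_t \;=\; \int df(V_t) \, d\mu_t \;=\; \int \left\langle \nabla f, \nabla \phi_t \right\rangle d\mu_t.
\end{equation*}
To promote this integrated identity to a pointwise statement along $\pi$-a.e.\ geodesic, I would exploit the fact that the same identity holds for every Borel restriction $\pi|_A$ of the plan, since such restrictions are themselves test plans with marginals dominated by $C\mathfrak m$. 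A disintegration and Lebesgue differentiation argument then yields absolute continuity of $t \mapsto f(\gamma_t)$ for $\pi$-a.e.\ $\gamma$ with derivative equal to $\left\langle \nabla f, \nabla \phi_t \right\rangle \circ e_t$.

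For stage (ii), the statement's assertion that $-(s-t_0)\phi_{t_0}$ is a Kantorovich potential for the plan viewed from time $t_0$ is precisely what is needed: the formula at $t_0$ follows from stage (i) applied to the reparametrized plan centered at $t_0$, with its initial Kantorovich potential $-(s-t_0)\phi_{t_0}$. For stage (iii), the test-plan bound $(e_t)_\sharp \pi \leq C \mathfrak m$ together with Cauchy--Schwarz gives
\begin{equation*}
\int \left| \left\langle \nabla(f_n - f), \nabla \phi_t \right\rangle \circ e_t \right| d\pi \;\leq\; C^{1/2} \, \|\nabla(f_n - f)\|_{L^2} \, \|\,|\nabla \phi_t|\,\|_{L^\infty(\mu_t)},
\end{equation*}
which vanishes along approximations $f_n \to f$ in $W^{1,2}$. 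Continuity in $t$ of the right-hand side, inherited from Hopf--Lax regularity of $\phi_t$ and continuity of gradient composition in the tangent module, then upgrades absolute continuity of $t \mapsto f \circ e_t$ to the claimed $C^1$ regularity.

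The main obstacle I expect lies in stage (i): because an $\RCD(K,N)$ space carries no smooth geodesic flow, the identification $V_t = \nabla \phi_t$ lives entirely within the abstract tangent module $L^2(TX)$ and must be extracted from the $c$-concavity of $t\phi$ via a first variation of $W_2$, together with verifying the needed $L^\infty$-regularity of $|\nabla \phi_t|$ on $\op{supp}(\mu_t)$ so that the continuity equation can be closed. Producing the pointwise formula $\pi$-almost everywhere from the integrated one then hinges on the crucial closure property that restrictions of optimal geodesic test plans remain optimal geodesic test plans with the same (rescaled) Kantorovich potential; without this, one cannot localize the integrated identity to individual geodesics.
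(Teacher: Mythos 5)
The paper does not prove Theorem~\ref{first-diff1}; it is imported from \cite{Gi13} and \cite{GiT} without argument, so there is no internal proof to compare against. Your sketch is essentially the strategy of those references: derive the integrated identity from the continuity equation driven by $\nabla\phi_t$, localize to a $\pi$-a.e.\ pointwise statement using stability of optimal geodesic test plans under Borel restriction (together with Fubini over $(\tau,\gamma)$ and a countable-dense-set-of-times argument to handle $t$-dependent null sets), reparametrize to cover all $t$, and close by density in $W^{1,2}$. You have also correctly isolated the delicate ingredients: the identification of the velocity field via $c$-concavity, the $L^\infty$ control on $|\nabla\phi_t|$ along $\mu_t$ coming from the metric Brenier theorem (bounded support gives uniformly bounded geodesic speeds), and the restriction property of optimal plans.

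The one place where the sketch is thinner than the actual proof is the upgrade from absolute continuity to the claimed $C^1$ regularity. You attribute it to ``Hopf--Lax regularity of $\phi_t$ and continuity of gradient composition in the tangent module,'' but the difficulty is that in $\left\langle \nabla f,\nabla\phi_t\right\rangle\circ e_t$ both the integrand and the evaluation point move with $t$, and $\nabla\phi_t$ is only controlled on the moving support of $\mu_t$. In \cite{GiT} this continuity is not a one-line observation: it is phrased and proved in the pullback modules $e_t^*L^2(TX)$, where one shows $t\mapsto e_t^*\mathrm{d}\phi_t$ is continuous with uniform norm bound (again via the metric Brenier identity $|\nabla\phi_t|\circ e_t=|\dot\gamma_t|$, constant in $t$), and only then pairs with $e_t^*\mathrm{d}f$. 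This is a genuine technical step that a complete proof would need to spell out; as stated your stage (iii) only delivers absolute continuity with the correct a.e.\ derivative, not $C^1$.
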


\begin{Thm}[\cite{GiT}]
Let the assumption be as in Theorem~\ref{first-diff1} and let $f\in H^{2,2}(X)$. Then the map $t\mapsto f\circ e_t$ is in $C^2([0,1])$ and for $t\in [0, 1]$
\begin{equation}
\frac{d^2}{dt^2}f\circ e_t=\op{Hess}(f)(\nabla \phi_t, \nabla \phi_t)\circ e_t.
\end{equation}
\end{Thm}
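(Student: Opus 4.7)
The plan is to prove the formula first on a dense class of functions and then extend by approximation. Since $H^{2,2}(X)$ is, by definition, the closure of $D(\Delta)$ in $W^{2,2}(X, d, \mathfrak m)$, and since the Hessian extends continuously to $H^{2,2}(X)$ while the left-hand side $t\mapsto f\circ e_t$ is continuous in $f$ with respect to the $W^{1,2}$-norm (because $\pi$ has bounded compression), it suffices to establish the identity for $f\in\op{Test}(X)$ and then pass to the limit in $f$. The convergence of the second derivatives will be controlled by the pointwise Hessian bound provided by the improved Bochner inequality of Theorem~\ref{Boc-ine}, which controls $\||\op{Hess}(f_n-f)|_{HS}\|_{L^2}$ in terms of the $W^{2,2}$-convergence of $f_n\to f$.

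For $f\in\op{Test}(X)$, I would start from Theorem~\ref{first-diff1}:
$$\frac{d}{dt}(f\circ e_t)=\langle\nabla f,\nabla\phi_t\rangle\circ e_t.$$
Setting $G_t(x):=\langle\nabla f,\nabla\phi_t\rangle(x)$, the aim is to differentiate $G_t\circ e_t$. Formally, the chain rule decomposes this as
$$\frac{d}{dt}\bigl(G_t\circ e_t\bigr)=(\partial_t G_t)\circ e_t+\langle\nabla G_t,\nabla\phi_t\rangle\circ e_t.$$
The spatial term is handled by a second application of the first-order formula, and the definition of the Hessian gives $\langle\nabla G_t,\nabla\phi_t\rangle=\op{Hess}(f)(\nabla\phi_t,\nabla\phi_t)+\op{Hess}(\phi_t)(\nabla f,\nabla\phi_t)$. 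The $t$-derivative of $G_t$ is controlled using the Hamilton--Jacobi equation $\partial_t\phi_t+\tfrac12|\nabla\phi_t|^2=0$ satisfied by the (rescaled) Kantorovich potentials along the optimal geodesic, which gives $\partial_t G_t=-\langle\nabla f,\nabla\tfrac12|\nabla\phi_t|^2\rangle=-\op{Hess}(\phi_t)(\nabla f,\nabla\phi_t)$. The two contributions involving $\op{Hess}(\phi_t)$ cancel, leaving $\op{Hess}(f)(\nabla\phi_t,\nabla\phi_t)\circ e_t$.

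The main obstacle is making the above computation rigorous, since a Kantorovich potential $\phi_t$ is in general only semiconvex, not in $W^{2,2}$ along the support of $\pi$, and the pointwise identity must be understood $\pi$-a.e.\ along curves. I would handle this through the duality with test plans developed by Gigli, integrating against $\pi$ and testing: for every $h\in\op{Test}(X)$ one expresses $\int h(\gamma_t)\tfrac{d}{dt}G_t(\gamma_t)\,d\pi(\gamma)$ as a sum of two terms each of which is justified by the continuity equation $\tfrac{d}{dt}\mu_t+\op{div}(\nabla\phi_t\,\mu_t)=0$ for $\mu_t=(e_t)_\sharp\pi$, the first-order formula for $\langle\nabla f,\nabla\phi_t\rangle\in W^{1,2}$, and the defining integration-by-parts identity for $\op{Hess}(f)$. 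The Hamilton--Jacobi equation enters only through the weak identity it induces on the $t$-derivatives of the potentials, bypassing any direct regularity claim on $\phi_t$.

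Finally, I would upgrade the resulting a.e.\ equality to $C^2$-regularity of $t\mapsto f\circ e_t$ by showing that the right-hand side $\op{Hess}(f)(\nabla\phi_t,\nabla\phi_t)\circ e_t$ is continuous in $t$ in $L^1(\pi)$, which follows from the $L^\infty$-bound on $|\nabla\phi_t|$ (geodesic test plan has bounded speed), from the continuity in $t$ of $\nabla\phi_t$ along $e_t$ coming from Theorem~\ref{first-diff1} applied to a dense family of test functions, and from $\op{Hess}(f)\in L^2((T^*)^{\otimes 2}X)$.
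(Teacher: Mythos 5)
The paper does not contain a proof of this statement; it is cited directly from Gigli--Tamanini \cite{GiT}. So there is no in-paper proof to compare against, but I can compare your sketch with what \cite{GiT} actually does.

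Your formal computation is the right heuristic, and you are right to flag the regularity of $\phi_t$ as the central obstacle. However, the fix you propose --- integrate against $\pi$, use the continuity equation, and invoke the integration-by-parts definition of $\op{Hess}(f)$ --- does not actually circumvent the obstacle; this is a genuine gap. To apply the continuity equation
\begin{equation*}
\frac{d}{dt}\int g\,d\mu_t=\int\langle\nabla g,\nabla\phi_t\rangle\,d\mu_t
\end{equation*}
to $g=h\,G_t$ you need $G_t=\langle\nabla f,\nabla\phi_t\rangle\in W^{1,2}$, and to make sense of $\partial_t G_t=-\langle\nabla f,\nabla\tfrac12|\nabla\phi_t|^2\rangle$ you need $|\nabla\phi_t|^2\in W^{1,2}$; both require $\phi_t\in W^{2,2}$ (at least locally on the support of $\mu_t$). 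A Kantorovich potential on an $\RCD(K,N)$ space is only semi-concave: its Laplacian is a genuine measure with a nontrivial singular part, and it is not in $W^{2,2}$ along the geodesic. Consequently the two terms $\op{Hess}(\phi_t)(\nabla f,\nabla\phi_t)$ that are supposed to cancel are not individually well-defined, so neither the chain-rule split nor the Hamilton--Jacobi cancellation can be carried out at this level, even in the weak formulation tested against $h\in\op{Test}(X)$ and integrated in $\pi$. This is precisely the difficulty that made the second-order formula resist a direct proof.

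What \cite{GiT} actually does is replace the $W_2$-geodesic $(\mu_t)$ by entropic interpolations $(\mu^\varepsilon_t)$ arising from the Schr\"odinger problem. The associated Schr\"odinger potentials solve forward and backward heat flow equations and are genuinely regular, so a version of your computation is rigorous for them. The substantial work is then to prove uniform-in-$\varepsilon$ a priori bounds (an entropic Benamou--Brenier identity, uniform Lipschitz and ``acceleration'' estimates) and to pass to the limit $\varepsilon\to0$, recovering the Kantorovich potentials and the pointwise-along-curves statement. This regularization step is the missing idea in your sketch; without something of this nature, the argument does not close.

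Two smaller remarks. The reduction from $f\in H^{2,2}(X)$ to $f\in\op{Test}(X)$ and the $L^2(\pi)$-continuity argument at the end are fine and consistent with \cite{GiT}. But the control of $\||\op{Hess}(f_n-f)|_{HS}\|_{L^2}$ comes directly from the definition of the $W^{2,2}$-norm, not from the improved Bochner inequality; the latter is what gives the inclusion $D(\Delta)\subset W^{2,2}$ in the first place, which you use one step earlier when you declare $\op{Test}(X)$ (or $D(\Delta)$) dense in $H^{2,2}(X)$.
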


A corollary of Theorem~\ref{first-diff1} is the following first order differential estimates along geodesics (see \cite[Corollary 3.14]{Deng}):
\begin{Cor}[\cite{Deng}] \label{first-diff2}
Let $(X, d, \mathfrak m)$ be a $\RCD(K, N)$-space and let $p\in X$, $f\in W^{1, 2}(X, d, \mathfrak m)$. For $\mathfrak m$-a.e. $x\in X$, the map $t\mapsto f(\gamma_{x, p}(t))$ is in $W^{1,1}_{\op{loc}}([0, d(p, x)))$ and 
\begin{equation}
\frac{d}{dt} f(\gamma_{x, p}(t))= -df(\nabla d_p)(\gamma_{x, p}(t)), \text{ for } a.e. t\in [0, d(p, x)),
\end{equation}
where $\gamma_{x, p}$ is a unit speed geodesic from $x$ to $p$. 
\end{Cor}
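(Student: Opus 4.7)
The plan is to realize the unit-speed geodesics $t\mapsto\gamma_{x,p}(t)$ as trajectories of an optimal geodesic test plan whose terminal measure concentrates at $p$, and then apply Theorem~\ref{first-diff1}. Morally the Kantorovich potential for transport into $\delta_p$ is $\tfrac12 d^2(\cdot,p)$; its gradient $d_p\,\nabla d_p$, after reparameterizing the $[0,1]$-time by arc length, produces exactly the right-hand side $-df(\nabla d_p)$.

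\textbf{Setup and computation.} Fix a compact $K\subset X\setminus\{p\}$ and $r\in(0,\mathrm{dist}(K,p))$. Let $\mu_0:=\mathfrak m(K)^{-1}\mathfrak m|_K$ and $\mu_1^r:=\mathfrak m(B_r(p))^{-1}\mathfrak m|_{B_r(p)}$; both have bounded density and bounded support. Essential non-branching of $\RCD(K,N)$ produces a unique optimal geodesic test plan $\pi_r$ from $\mu_0$ to $\mu_1^r$. For each $t\in[0,1)$, Theorem~\ref{first-diff1} applies to $\pi_r$ with any $\phi_{t,r}$ such that $-(1-t)\phi_{t,r}$ is a Kantorovich potential from $\mu_t^r:=(e_t)_\sharp\pi_r$ to $\mu_1^r$. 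Sending $r\to 0$ forces $\mu_1^r\to\delta_p$, and the potentials converge to
\[
\phi_t(y):=-\frac{d^2(y,p)}{2(1-t)},\qquad \nabla\phi_t(y)=-\frac{d(y,p)}{1-t}\nabla d_p(y),
\]
with $d_p\,\nabla d_p$ an $L^\infty_{\mathrm{loc}}$ vector field away from $p$. Using $d(\gamma(t),p)=(1-t)\,d(\gamma(0),p)$ along $W_2$-geodesics into $p$, Theorem~\ref{first-diff1} then reads
\[
\frac{d}{dt}f(\gamma(t))=\langle\nabla f,\nabla\phi_t\rangle(\gamma(t))=-d(\gamma(0),p)\,df(\nabla d_p)(\gamma(t)).
\]
Setting $s:=t\,d(\gamma(0),p)$ reparameterizes $\gamma$ to the unit-speed geodesic $\gamma_{\gamma(0),p}$ on $[0,d(\gamma(0),p))$ and absorbs the prefactor, giving the stated identity.

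\textbf{Conclusion and main obstacle.} Since $(e_0)_\sharp\pi_r\le C_K\,\mathfrak m$, Fubini converts the $\pi_r$-a.e. identity into one valid for $\mathfrak m$-a.e. $x\in K$; exhausting $X\setminus\{p\}$ by compact sets finishes the argument. The $W^{1,1}_{\mathrm{loc}}$-regularity follows at once because the right-hand side is dominated by $|df|_w\in L^2$. The delicate step is the limit $r\to 0$: one has to show that the $\phi_{t,r}$ converge to $\phi_t$ strongly enough (weak or $L^2_{\mathrm{loc}}$-gradient convergence suffices when paired against $\nabla f$) and that the plans $\pi_r$ concentrate on unit-speed geodesics actually reaching $p$. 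This leans on essential non-branching and on the stability theory of optimal transport in $\RCD(K,N)$; alternatively one can avoid the limit by verifying directly that $\tfrac12 d_p^2$ is locally $c$-concave on $\{d_p\ge \eta\}$ and running the argument once with potential $\phi_t$ exactly.
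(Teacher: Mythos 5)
The paper states this corollary by citation to \cite[Corollary 3.14]{Deng} and does not reprove it, so there is no ``paper's own proof'' to compare against directly; however, the surrounding text (the paragraph between Corollary~\ref{first-diff2} and Corollary~\ref{second-diff2}, where the estimate $(\mu_t)_{t\in[0,1-\delta]}\le C(K,N,D,\delta)\mathfrak m$ is invoked) makes clear that the intended route is to keep $\mu_1=\delta_p$ and \emph{restrict in time} to $[0,1-\delta]$, not to regularize the target measure. That is precisely the alternative you relegate to your last sentence, and it is worth promoting to the main argument: since $\phi=\tfrac12 d_p^2$ is a Kantorovich potential from $\mu_0$ to $\delta_p$ (the $\partial^c$-condition $\phi(z)-\phi(x)\le\tfrac12 d^2(z,p)-\tfrac12 d^2(x,p)$ holds trivially with equality), the rescaling rule in Subsection~2.5 gives $(1-\delta)\phi$ as a Kantorovich potential from $\mu_0$ to $\mu_{1-\delta}$, and the contraction estimate makes the time-restricted plan a genuine optimal geodesic test plan, so Theorem~\ref{first-diff1} applies directly on $[0,1-\delta]$ with no limiting procedure. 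Letting $\delta\downarrow 0$ then yields the formula on $[0,1)$, which after your reparametrization $s=t\,d(x,p)$ gives the claim.

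Your primary route ($r\to 0$, target measure $\mu_1^r$ spread over $B_r(p)$) is not wrong, but it leaves a real gap that your conclusion only names without closing. Two things must be controlled uniformly in $r$: first, the optimal plans $\pi_r$ pair $x$ with some $T_r(x)\in B_r(p)$ that is in general \emph{not} $p$, so the Kantorovich potential for $\pi_r$ is some $c$-concave $\phi_r$ depending on $\mu_1^r$ and defined only up to additive constant; to pass $\langle\nabla f,\nabla\phi_{t,r}\rangle\circ e_t\to\langle\nabla f,\nabla\phi_t\rangle\circ e_t$ one needs $\nabla\phi_{t,r}\to\nabla\phi_t$ in (at least) $L^2_{\mathrm{loc}}$ against the evolving measures, not merely uniform convergence of $\phi_{t,r}$. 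Second, the plans $\pi_r$ are supported on geodesics from $K$ to $B_r(p)$, and you need these to converge to geodesics terminating at $p$; essential non-branching gives uniqueness of the limit but not by itself the quantitative concentration used in passing the derivative identity to the limit. Both can be handled with the stability theory for optimal transport on $\RCD$ spaces, but neither is automatic, and the time-restriction argument bypasses the entire issue by working with a single, explicit potential $\tfrac12 d_p^2$ and a plan that is already a test plan after restriction. I would rewrite the proof around that route and drop the $r\to 0$ machinery.

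One small correction of fact: your display $\nabla\phi_t(y)=-\frac{d(y,p)}{1-t}\nabla d_p(y)$ is what one gets for the potential from $\mu_t$ to $\mu_1=\delta_p$ normalized per Theorem~\ref{first-diff1} (with $s=1$, so $-(1-t)\phi_t$ is a Kantorovich potential from $\mu_t$ to $\delta_p$, i.e.\ $\phi_t=-\tfrac{1}{1-t}\cdot\tfrac12 d_p^2$ up to constant), and the subsequent cancellation $d(\gamma(t),p)=(1-t)d(\gamma(0),p)$ is correct; but you should say this is being read off Theorem~\ref{first-diff1} with $s=1$, since the theorem allows any $s\ne t$ and the reader otherwise cannot check the sign and the $(1-t)^{-1}$ factor.
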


For a fixed point $p\in X$ and a measure $\mu\in \mathcal P_2(X)$ with $\mu\leq C\mathfrak m$, let $\gamma_{x, p}(t), t\in [0, 1]$ be a constant speed geodesic from $x\in \op{supp}(\mu)$ to $p$, let $D=\sup _{x\in \op{supp}(\mu)}d(x, p)$ and let $\mu_t=(\gamma_{\cdot, p}(t))_{\sharp} (\mu)$.  By \cite[Theorem 3.15]{Deng}, we have that $(\mu_t)_{t\in [0, 1-\delta]}\leq C(K, N, D, \delta)\mathfrak m$ is a $W_2$-geodesic for any $1>\delta>0$, and $(\mu_t)_{t\in [0, 1-\delta]}$ solves the continuity equation
$$\frac{d}{dt} \mu_t+\op{div}(-\nabla d_p \mu_t)=0.$$
And we have the second order differential formula, as \cite[Proposition 3.21]{Deng},
\begin{Cor} \label{second-diff2}
Let the assumption be as in Corollary~\ref{first-diff2}, let $\Pi\leq C(\mathfrak m\times \mathfrak m)$ be a nonnegative, compactly supported measure on $X\times X$ and let $f\in H^{2,2}(X)$. Then for each $t, s\in (0, 1]$, 
\begin{equation}
\int \left<\nabla f, \nabla d_x\right>(\tilde{\gamma}_{x, y}(t))-\left<\nabla f, \nabla d_x\right>(\tilde{\gamma}_{x, y}(s))d\Pi(x, y)=\int_s^t\int d(x, y)\op{Hess}(f)(\nabla d_x, \nabla d_x)(\tilde{\gamma}_{x, y}(\tau))d\Pi(x, y)d\tau,
\end{equation}
where $\tilde{\gamma}_{x, y}:[0, 1]\to X$ is a constant speed geodesic from $x$ to $y$.
\end{Cor}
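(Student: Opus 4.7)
The plan is to promote the stated integrated identity to a pointwise second-order differentiation along each geodesic $\tilde\gamma_{x,y}$ and then integrate against $\Pi$, obtaining the pointwise statement by applying the first- and second-order formulas for test plans to a fixed-endpoint transport.

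First, I would disintegrate $\Pi = \int \Pi^y\,d\nu(y)$ with respect to its second marginal $\nu = (p_2)_\sharp\Pi$. For each $y$, fix a Borel selection $x \mapsto \tilde\gamma_{x,y}$ of constant-speed geodesics and set $\pi^y := (\tilde\gamma_{\cdot,y})_\sharp \Pi^y$. This dynamical plan transports $\Pi^y$ to $\delta_y$ and is thus automatically optimal, and the interior-time compression estimate $(e_t)_\sharp \pi^y \leq C(K,N,D,\delta)\mathfrak m$ for $t \in [0, 1-\delta]$ (quoted from the paragraph immediately preceding the corollary) certifies that the restriction of $\pi^y$ to $[0, 1-\delta]$ is an optimal geodesic test plan. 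The corresponding Kantorovich potential at time $t$ is $\phi_t^y(z) = -\tfrac{d_y^2(z)}{2(1-t)}$, whose gradient at a point $z = \tilde\gamma_{x,y}(t)$ equals $-\tfrac{d(z,y)}{1-t}\nabla d_y(z) = d(x,y)\nabla d_x(z)$, using $d(\tilde\gamma_{x,y}(t),y) = (1-t)d(x,y)$ and $\nabla d_y = -\nabla d_x$ along the geodesic interior.

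Substituting this identification into the $C^1$ formula of Theorem~\ref{first-diff1} and into its $C^2$ companion yields, $\pi^y$-a.e.,
\begin{equation*}
\frac{d}{dt} f(\tilde\gamma_{x,y}(t)) = d(x,y)\langle\nabla f,\nabla d_x\rangle(\tilde\gamma_{x,y}(t)), \qquad \frac{d^2}{dt^2} f(\tilde\gamma_{x,y}(t)) = d(x,y)^2 \op{Hess}(f)(\nabla d_x,\nabla d_x)(\tilde\gamma_{x,y}(t)).
\end{equation*}
Since $d(x,y)$ is $t$-independent, equating $\tfrac{d}{dt}$ of the first identity with the second gives, on $\{d(x,y) > 0\}$,
\[\frac{d}{dt}\langle\nabla f,\nabla d_x\rangle(\tilde\gamma_{x,y}(t)) = d(x,y)\op{Hess}(f)(\nabla d_x, \nabla d_x)(\tilde\gamma_{x,y}(t)),\]
while on the diagonal $\{x = y\}$ both sides vanish trivially. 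Integrating in $\tau$ from $s$ to $t$, then against $\Pi^y$, and finally in $d\nu(y)$, Fubini produces the claimed identity for $s, t \in (0, 1-\delta]$; letting $\delta \to 0$ and using local $L^1$-integrability of the right-hand side in $\tau \in (0, 1]$ extends it to $s, t \in (0, 1]$.

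The main obstacle is that $\Pi \leq C(\mathfrak m\otimes\mathfrak m)$ does not by itself give a uniform-in-$y$ density bound on the conditional $\Pi^y$, which is needed to apply the fixed-endpoint test-plan machinery at $t = 0$. I would resolve this by truncating to $\{y : (d\nu/d\mathfrak m)(y) \geq \epsilon\}$, on which $\Pi^y$ has density bounded by $C/\epsilon$, running the argument on the truncated plan, and passing $\epsilon \to 0$ by dominated convergence, using compact support of $\Pi$ together with $L^\infty_{\op{loc}}$-bounds on $|\nabla f|$ and $|\op{Hess}(f)|_{\op{HS}}$ obtained by standard truncation of $f$ against a cutoff supported near $\op{supp}(\Pi)$. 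The Borel measurable selection of geodesics is standard in the essentially non-branching setting of $\RCD(K,N)$-spaces.
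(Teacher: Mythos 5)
The paper does not supply its own proof of this corollary; it is quoted from \cite[Proposition 3.21]{Deng} without argument, so your proposal must stand on its own. The overall architecture you choose — disintegrate $\Pi$ with respect to its second marginal $\nu$, push each conditional $\Pi^y$ along constant-speed geodesics to $\delta_y$ to get a (restricted) optimal geodesic test plan, identify $\nabla\phi_t^y(\tilde\gamma_{x,y}(t)) = d(x,y)\nabla d_x(\tilde\gamma_{x,y}(t))$, apply the first- and second-order differentiation formulas, divide by the $t$-independent factor $d(x,y)$, and integrate back against $\Pi$ — is the natural route and the potential/gradient computation is correct (consistent with the sign convention visible in Corollary~\ref{first-diff2}). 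The obstacle you flag, that the conditional densities of $\Pi^y$ are only bounded by $C/\rho(y)$ with $\rho = d\nu/d\mathfrak m$, is the right thing to worry about, and truncating to $\{\rho(y)\geq\varepsilon\}$ is a reasonable device.

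There is, however, a concrete error in how you propose to pass $\varepsilon\to 0$. You invoke ``$L^\infty_{\mathrm{loc}}$-bounds on $|\nabla f|$ and $|\op{Hess}(f)|_{\mathrm{HS}}$ obtained by standard truncation of $f$ against a cutoff.'' For $f\in H^{2,2}(X)$ neither $|\nabla f|$ nor $|\op{Hess}(f)|_{\mathrm{HS}}$ is in $L^\infty_{\mathrm{loc}}$ in general — they are only $L^2$ — and multiplying $f$ by a cutoff $\psi$ does not help: $\op{Hess}(\psi f)$ still contains the summand $\psi\op{Hess}(f)$, so no boundedness is gained on the region where $\psi\equiv 1$, which is exactly where you need it. As written, the dominated-convergence step therefore has no majorant and the proof does not close. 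The fix is to work in $L^2$ rather than $L^\infty$: for $\tau$ in a compact subinterval of $(0,1)$, the pushforward $m_\tau := (\tilde\gamma_{\cdot,\cdot}(\tau))_\sharp\Pi$ has density $\leq c(\tau)$ uniformly (integrate the $y$-conditional compression bound against $d\nu(y)=\rho(y)\,d\mathfrak m(y)$; the $\rho(y)^{-1}$ from the conditional density and the $\rho(y)$ from $d\nu$ cancel), so that $\int |\op{Hess}(f)|_{\mathrm{HS}}(\tilde\gamma_{x,y}(\tau))\,d\Pi \leq c(\tau)^{1/2}\,\|\op{Hess}(f)\|_{L^2}\,\Pi(X\times X)^{1/2}$ by Cauchy--Schwarz, and likewise the truncated-away contribution is controlled by $\|\op{Hess}(f)\|_{L^2}\cdot\|\,d m_\tau^\varepsilon/d\mathfrak m\,\|_{L^2}\to 0$. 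Once this is done the $\varepsilon\to 0$ limit is clean; the remaining $\delta\to 0$ step (to reach $t,s$ up to $1$) then needs the same $L^2$ control near $\tau=1$, and should be spelled out with the observation that $m_1=\nu$ itself has bounded density, rather than appealing to an unspecified ``local $L^1$-integrability.''
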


\subsection{Warped product and $(K, N)$-cone over metric measure spaces}


Consider two metric measure spaces $(Z, d_Z, \mathfrak m_Z)$ and $(Y, d_Y, \mathfrak m_Y)$ and two continuous maps $w_d, w_m: Z\to [0, \infty)$ with $\{w_d=0\}\subset \{w_m=0\}$. A \textbf{warped product} $Z\times_w Y$ is the space $Z\times Y$ admitting the metric 
$$d_w(p, q)=\inf\{l_w(\gamma),\, \gamma \text{ is a absolutely continuous curve between } p, q \},$$
where $\gamma=(\gamma^Z, \gamma^Y)$,
$$l_w(\gamma)=\int_0^1\sqrt{|\dot \gamma_t^Z|^2+w_d^2(\gamma_t^Z)|\dot \gamma_t^Y|^2}dt.$$
And the measure $\mathfrak m_w$ on $Z\times_w Y$ is defined as 
$$d\mathfrak m_w= w_md\mathfrak m_Z\otimes d\mathfrak m_Y.$$

\begin{Def}
The $(K, N)$-cone $(C(Y), d_{K}, \mathfrak m_N)$ over a metric measure space $(Y, d, \mathfrak m_Y)$ is defined as follows: let $H=K/(N-1)$ and $(t, y_1), (t, y_2)\in C(Y)$,

$$C(Y)=\begin{cases} \left[0, \pi/\sqrt H\right]/ \{\{0, \pi/\sqrt H\}\times Y\}, & K>0;\\ [0, \infty)\times Y/\{0\times Y\}, & K\leq 0; \end{cases}$$
$$d_K((t, y_1), (s, y_2))=\begin{cases} \sqrt{t^2+s^2-2st\cos\min\{\pi, d(y_1, y_2)\}} & K=0;\\ 
(\op{sn}'_H)^{-1}\left(\op{sn}'_H(t)\op{sn}'_H(s)+H\op{sn}_H(s)\op{sn}_H(t)\cos \min\{\pi, d(y_1, y_2)\}\right), & K\neq 0; \end{cases}$$

$$\mathfrak m_N=\op{sn}^{N-1}_H(t)dt\otimes \mathfrak m_Y.$$
\end{Def}

It is obvious that if $\op{diam}(Y)\leq \pi$, then $C(Y)= I_K\times_{\op{sn}_H}^{N-1}Y$, where $I_K=[0, \frac{\pi}{\sqrt H}]$ for $K>0$ and $I_K=[0, \infty)$ for $K\leq 0$ and $w_m=\op{sn}_H^{N-1}$, $w_d=\op{sn}_H$. And in \cite{Ket1}, Ketterer showed that for $N\geq 2$, $K\geq 0$, a $(K, N)$-cone over $(Y, d, \mathfrak m)$ is a $\RCD(K, N)$-space if and only if $(Y, d, \mathfrak m)$ is a $\RCD(N-2, N-1)$-space. More precisely,

\begin{Thm}[\cite{Ket1}] \label{cone-rcd}
(i) Assume  a metric measure space $(Y, d, \mathfrak m)\in \RCD(N-2, N-1)$, $N\geq 2$, $K\geq 0$ and $\op{diam}(Y)\leq \pi$, Then the $(K, N)$-cone over $(Y, d, \mathfrak m)$, $(C(Y), d_K, \mathfrak m_N)\in \RCD(K, N)$;

(ii) Assume the  $(K, N)$-cone over a metric measure space $(Y, d, \mathfrak m)$, $(C(Y), d_K, \mathfrak m_N)\in \RCD(K, N)$ with $N\geq 2$, then $(Y, d, \mathfrak m)\in \RCD(N-2, N-1)$ with $\op{diam}(Y)\leq \pi$.
\end{Thm}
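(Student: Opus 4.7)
This statement is proven in two directions and I would attack each by transferring curvature-dimension bounds through the warped-product structure $C(Y)=I_K\times_{\op{sn}_H}Y$ with weight $w_m=\op{sn}_H^{N-1}$, modulo the collapses at $r=0$ (and $r=\pi/\sqrt H$ for $K>0$). My overall approach in both directions is to combine the $\op{CD}^*(K,N)$ characterisation by optimal transport with infinitesimal Hilbertianness, and to appeal to the local-to-global property $\op{CD}^*_{\op{loc}}=\op{CD}^*$ for essentially non-branching spaces in order to bypass difficulties arising at the singular tip.

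For the sufficiency direction (i), I would take $\mu_0,\mu_1\in\mathcal{P}_2(C(Y))$ compactly supported in the regular part and disintegrate them along the radial projection $C(Y)\to I_K$, reducing the problem to a radial transport on $I_K$ with weight $\op{sn}_H^{N-1}dr$ (which is itself a 1-dimensional $\op{CD}(K,N)$ space by direct computation) coupled with angular transports on $Y$ for each pair of matched radii, which are governed by $\RCD(N-2,N-1)$ of $Y$. Pasting produces a Wasserstein geodesic on the cone, and verifying the $\tau_{K,N}^t$ concavity of the density factorises into a radial inequality (handled by 1-D CD) and an angular inequality, which is exactly compensated by the warping $\op{sn}_H$: it converts the $(N-2,N-1)$ bound on $Y$ into the angular contribution of the $(K,N)$ bound on the cone. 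Infinitesimal Hilbertianness then transfers from $Y$ to $C(Y)$ via tensorisation of Cheeger energies on the warped product.

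For the necessity direction (ii), assuming $(C(Y),d_K,\mathfrak m_N)\in\RCD(K,N)$, the diameter bound $\op{diam}(Y)\leq\pi$ should come from essential non-branching on $C(Y)$: were $\op{diam}(Y)>\pi$, one could produce two distinct minimising geodesics out of the tip sharing an initial radial segment, contradicting the $\RCD$ non-branching property (a Bishop-Gromov volume comparison on the cone gives an alternative argument). To recover $\RCD(N-2,N-1)$ on $Y$, I would restrict the improved Bochner inequality on $C(Y)$ (Theorem~\ref{Boc-ine}) to test functions of product form $f(y)\varphi(r)$, with $f\in\op{Test}(Y)$ and $\varphi$ a cutoff localised near a chosen regular radius $r_0\in(0,\pi/\sqrt H)$. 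The explicit radial derivatives of $\op{sn}_H^{N-1}$ produce precisely the shift $N\mapsto N-1$, $K\mapsto N-2$, and passing $\varphi$ to a bump concentrated at $r_0$ leaves the Bochner inequality on $Y$ with the claimed parameters. Infinitesimal Hilbertianness of $Y$ is inherited from that of $C(Y)$ by restricting Cheeger energies to radially constant functions.

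The main obstacle in both directions is the behaviour at the tip, where the warped-product decomposition degenerates: in (i) the geodesics produced by the radial-angular pasting may traverse the tip, so one must either exclude them by a density argument paired with the local-to-global $\op{CD}$ theorem, or handle them through a separate limiting argument that verifies the $\tau_{K,N}^t$-inequality for mass passing through $r=0$; in (ii) the test functions $f(y)\varphi(r)$ must be restricted to supports bounded away from $r=0$ and $r=\pi/\sqrt H$, and the limit $\varphi\to\delta_{r_0}$ requires uniform control on the error in the Bochner inequality as the support shrinks. These tip-regularity steps are where the real technical labour lies and where Ketterer's argument in \cite{Ket1} invests most of its work.
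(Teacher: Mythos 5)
This statement is quoted verbatim from Ketterer \cite{Ket1}; the present paper does not prove it but cites it, and then in Section 6.3 adapts Ketterer's argument for direction (ii) under weaker hypotheses. The relevant comparison is therefore against Ketterer's method as mediated by the paper's Theorem~\ref{Y-rcd}.

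For direction (ii), your proposal is in the same Bakry--\'Emery spirit as Ketterer, but your mechanism is different and, as stated, incomplete. You propose applying the Bochner inequality on $C(Y)$ to product test functions $f(y)\varphi(r)$ and sending $\varphi\to\delta_{r_0}$ so that the radial dependence cancels. The heuristic computation (in a smooth warped product one indeed gets $\mathrm{Ric}_Y \geq (N-2)$ from $\mathrm{Ric}_{C(Y)} \geq K$ using $H\,\op{sn}_H^2 + (\op{sn}_H')^2 = 1$) is correct, but in the nonsmooth setting the decomposition of $\Gamma_2^{C(Y)}$ for product functions contains cross-terms in $u_1$, $u_1'$, $u_1''$ that do not trivially disappear in a bump-function limit. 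What Ketterer actually does --- and what the paper reproduces as equation \eqref{BE-weak} --- is to choose the radial part \emph{fixed}: $u_1 = \sin t$ for $K = N-1$, $u_1 = \sinh t$ for $K = -(N-1)$. With this choice the cross-terms organize into the \emph{modified} Bakry--\'Emery inequality \eqref{B-E} carrying a defect term $-\frac1{(N+1)N}\int (L^Y u + N u)^2\phi$, and passing from that modified inequality to the $(N-2,N-1)$ Bakry--Ledoux estimate is itself nontrivial (it mimics the EKS argument and is explicitly deferred in the paper). Your bump-function route would have to justify that the cross-terms vanish in the limit and would not naturally produce the defect term that Ketterer's specific choice is designed to generate and absorb; that is a genuine gap, not a stylistic difference.

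Your argument for $\op{diam}(Y)\le\pi$ via essential non-branching does not work as stated: essential non-branching in $\RCD$-spaces is a \emph{measure-theoretic} statement about optimal geodesic plans, and the cone tip is a single point, hence $\mathfrak m_N$-null, so geodesics through the tip cannot directly violate it. The diameter bound in \cite{Ket1} is obtained by a different route.

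For direction (i), you propose a direct optimal-transport argument, disintegrating mass along radial and angular variables and pasting. Ketterer's proof of (i) is not optimal-transport based --- it runs through the $\Gamma$-calculus/Bakry--\'Emery characterization (as does the whole architecture of \cite{Ket1}, on which the paper's Section 6.3 is modeled). Your OT route is a genuinely different approach and, as you yourself flag, hits a real obstruction at the tip: Wasserstein geodesics between measures on the cone can pass through $r=0$ (and, for $K>0$, through $r=\pi/\sqrt H$), where the radial--angular splitting degenerates, and a density argument excluding them would need to be coupled with a local-to-global theorem whose hypotheses (essential non-branching) you have not yet established for $C(Y)$. So the proposal identifies the right difficulty but does not supply the idea that resolves it; by contrast, the analytic approach sidesteps the tip by working with $\mathfrak m_N$-a.e. bounds.

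In short: for (ii) you are close to the right framework but substitute a limiting argument that elides the cross-term and defect-term bookkeeping that is the technical heart of \cite{Ket1}; for (i) you propose a different (OT) route that leaves the central tip obstruction open; and the diameter argument via non-branching is not correct.
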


\subsection{Measure decomposition and Heintze-Karcher inequality in $\RCD(K, N)$-spaces}

In this subsection, we briefly recall the following measure decomposition in $\RCD(K, N)$-spaces by \cite{CM17, CMo} and the Heintze-Karcher inequality in $\RCD(K, N)$-spaces given by \cite{Ket2}. Note that in \cite{CM17}, to derive the measure decomposition result they also assume the condition that $\mathfrak m(X)<\infty$. In \cite{CMo}, they showed that this assumption is unnecessary. 

Consider a measurable space $(R, \mathcal R, \mathfrak m)$ and a map $\mathfrak Q: R\to Q$. One can equip $Q$ with a $\sigma$-algebra $\mathcal Q$: $B\in \mathcal Q$ iff $\mathfrak Q^{-1}(B)\in \mathcal R$. Let $\mathfrak Q_{\sharp} \mathfrak m=\mathfrak q$ which is a probability measure on $Q$.

A \textbf{disintegration} of $\mathfrak m$ which is consistent with $\mathfrak Q$ is a map $\mathcal R\times Q\to [0, 1]$, $(A, \alpha)\mapsto \mathfrak m_{\alpha}(A)$ satisfying that:

(i) $\mathfrak m_{\alpha}$ is a probability measure on $(R, \mathcal R)$ for each $\alpha\in Q$;

(ii) $\alpha\mapsto \mathfrak m_{\alpha}(A)$ is $\mathfrak q$-measurable for each $A\in \mathcal R$;

(iii) For $A\in \mathcal R, B\in \mathcal Q$, 
$$\mathfrak m(A\cap \mathfrak Q^{-1}(B))=\int_B \mathfrak m_{\alpha}(A)d\mathfrak q(\alpha).$$

We call $\{\mathfrak m_{\alpha}\}_{\alpha\in Q}$ a disintegration of $\mathfrak m$ and call $\mathfrak m_{\alpha}$ the conditional probability measures.

A  disintegration $\{\mathfrak m_{\alpha}\}_{\alpha\in Q}$ is strongly consistent with $\mathfrak Q$ if for $\mathfrak q$-a.e. $\alpha$, $\mathfrak m_{\alpha}(\mathfrak Q^{-1}(\alpha))=1$.

Let $(X, d, \mathfrak m)$ be as in the beginning of this section and  be essentially non-branching.  Let  $u: X\to \Bbb R$ be a $1$-Lipschitz map.
A set $A\subset X\times X$ is $d$-cyclically monotone if for any finite set of points $(x_1, y_1), \cdots, (x_n, y_n)\in A$
$$\sum_{i=1}^nd(x_i, y_i)\leq \sum_{i=1}^n d(x_i, y_{i+1}), \, y_{n+1}=y_1.$$
A $d$-cyclically monotone set associated with $u$ is defined as
$$\Gamma=\{(x, y)\in X\times X, \, u(x)-u(y)=d(x, y)\}.$$ 
It is obvious that $(x, y)\in \Gamma$ implies that for any minimizing geodesic $\gamma$ from $x$ to $y$, $(\gamma_t, \gamma_s)\in \Gamma$ for $0\leq s\leq t\leq 1$. 

Let the transport rays $R=\Gamma\cup \Gamma^{-1}$, where $\Gamma^{-1}=\{(x, y)\in X\times X, \, (y, x)\in \Gamma\}$. Let $\mathcal T=P_1(R\setminus \{(x, y), x=y\in X\})\subset X$ where $P_1(x, y)=x$.  And the sets
$$A_{+}=\{x\in \mathcal T, \, \exists z, w\in \Gamma(x), (z, w)\notin R\},$$
$$A_{-}=\{x\in \mathcal T, \, \exists z, w\in \Gamma^{-1}(x), (z, w)\notin R \},$$ 
 are called forward and backward branching points respectively,
where $\Gamma(x)=\{y\in X, (x, y)\in \Gamma\}$, and similar define $\Gamma^{-1}(x), R(x)$.
The initial and final points are
$$\mathfrak a=\{x, \, \Gamma^{-1}(x)=\{x\} \}, \,\, \mathfrak b=\{x, \, \Gamma(x)=\{x\}\}.$$

  Let  $\mathcal T_u=\mathcal T\setminus (A_{+}\cup A_{-})$. It was proved that
 $$x\sim y \Leftrightarrow (x, y)\in R$$ is an equivalence relation on $\mathcal T_u$ (\cite{CM17, CMo}). Denote this relation by $R_u$ and  the equivalence classes by $\{X_{\alpha}\}_{\alpha\in Q}$. 
 Each $X_{\alpha}$ is isometric to an interval $I_{\alpha}\subset \Bbb R$ via an isometry $\gamma_{\alpha}: I_{\alpha}\to X_{\alpha}$ such that $d(\gamma_{\alpha}(t), \gamma_{\alpha}(s))=|t-s|$ for $t, s\in I_{\alpha}$  .
 The map $\gamma_{\alpha}$ extends to a geodesic in $X$ which is also denoted by $\gamma_{\alpha}$. Denote the closure $\bar I_{\alpha}$ of $I_{\alpha}$ by $[a(X_{\alpha}), b(X_{\alpha})]$.

\begin{Thm}[\cite{CM17, CMo}]\label{dis-com}
Let $(X, d, \mathfrak m)$ be an essentially non-branching $\op{CD}(K, N)$-space with $\op{supp}(\mathfrak m)=X$. Let $u: X\to \Bbb R$ be a $1$-Lipschitz function. Then 

(i) there exists a disintegration $\{\mathfrak m_{\alpha}\}_{\alpha\in Q}$ of $ \mathfrak m\llcorner_{\mathcal T_u}$ that is strongly consistent;

(ii) there is $Q'\subset Q$ such that $\mathfrak{q}(Q\setminus Q')=0$ and for each $\alpha\in Q'$, $\mathfrak m_{\alpha}$ is a Radon measure with $\mathfrak m_{\alpha}=h_{\alpha}\mathcal{H}^1\llcorner_{X_{\alpha}}$ and $(X_{\alpha}, d, \mathfrak m_{\alpha})$ verifies the condition $\op{CD}(K, N)$. More precisely, for $\alpha\in Q'$, it holds that
\begin{equation}h_{\alpha}(\gamma(t))^{\frac1{N-1}}\geq \sigma_{K, N-1}^{1-t}(|\gamma'|)h_{\alpha}(\gamma(0))^{\frac1{N-1}}+\sigma_{K, N-1}^{t}(|\gamma'|)h_{\alpha}(\gamma(1))^{\frac1{N-1}},\label{ele-com}\end{equation}
for each constant speed geodesic $\gamma: [0, 1]\to (a(X_{\alpha}), b(X_{\alpha}))$.
\end{Thm}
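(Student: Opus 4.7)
The plan is to construct the disintegration via the quotient by the transport relation $R_u$ and then verify the $\op{CD}(K,N)$ condition on each ray by a localization argument. Since $(X,d,\mathfrak{m})$ is essentially non-branching, the preceding discussion already shows that $R_u$ is an equivalence relation on $\mathcal{T}_u$, partitioning this set into maximal geodesic rays $X_\alpha$ isometric to intervals. First I would endow the quotient $Q = \mathcal{T}_u / R_u$ with a measurable structure by constructing a Borel section: since $\Gamma$ is analytic (it is the intersection of the zero set of the continuous function $d(x,y)-(u(x)-u(y))$ with $X\times X$), a Kuratowski--Ryll-Nardzewski type selection yields a Borel measurable choice of a distinguished point in each ray (for instance a quasi-midpoint with respect to the interval parametrization). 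Pushing $\mathfrak{m}\llcorner_{\mathcal{T}_u}$ forward under the quotient map $\mathfrak{Q}$ defines $\mathfrak{q}$, and the classical disintegration theorem for $\sigma$-finite Radon measures on analytic measurable spaces then produces a strongly consistent family $\{\mathfrak{m}_\alpha\}_{\alpha \in Q}$, proving (i).

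For (ii) I would show that $\mathfrak{q}$-a.e.\ conditional $\mathfrak{m}_\alpha$ has the form $h_\alpha \mathcal{H}^1 \llcorner_{X_\alpha}$ with $h_\alpha$ satisfying \eqref{ele-com}. The idea is to exploit $\op{CD}(K,N)$ locally along rays. Fix a ray $X_\alpha$ and two small compact subsegments $A_0, A_1 \subset X_\alpha$; thicken them transversally and normalize to form probability measures $\mu_0, \mu_1$ absolutely continuous with respect to $\mathfrak{m}$. Because we work on $\mathcal{T}_u$ with both forward and backward branching points removed, any $\pi \in \op{OpGeo}(\mu_0,\mu_1)$ must be concentrated on pairs lying on the same ray: the $d$-cyclical monotonicity of optimal couplings together with $\Gamma \subset R$ and essential non-branching confine transport to rays. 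Applying $\op{CD}(K,N)$ to this transport, disintegrating the resulting entropy inequality over $Q$, and then letting the transversal thickness shrink via a Lebesgue differentiation argument, reduces the global displacement convexity to the pointwise one-dimensional statement \eqref{ele-com} for $h_\alpha$ along $\gamma_\alpha$. Absence of atoms for $\mathfrak{m}_\alpha$ at interior points of $X_\alpha$ follows as a by-product, since an atom would violate displacement convexity under small symmetric transports.

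The main obstacle is the confinement of optimal transport to individual rays. The $\op{CD}(K,N)$ condition only guarantees the existence of \emph{some} good plan between $\mu_0$ and $\mu_1$; to transfer the resulting convexity to ray-by-ray densities one must show that this plan cannot mix mass across different rays, so that after disintegrating $\pi = \int \pi_\alpha\, d\mathfrak{q}(\alpha)$ each $\pi_\alpha$ is supported in $X_\alpha \times X_\alpha$. This is exactly where the hypothesis of essentially non-branching, combined with the removal of $A_+ \cup A_-$ from $\mathcal{T}$, plays its essential role: these assumptions force the optimal coupling to be induced by a Borel map moving each point along its own ray. A secondary difficulty is verifying that the curvature-dimension inequality survives the disintegration in the sharp form \eqref{ele-com} rather than only in some averaged or weaker version; this requires careful use of the fact that $\sigma_{K,N-1}$ depends only on the one-dimensional displacement $|\gamma'|$, which is preserved when transport is confined to a single ray.
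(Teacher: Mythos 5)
The paper cites Theorem~\ref{dis-com} to Cavalletti--Mondino \cite{CM17,CMo} without giving a proof, so I am evaluating your sketch against the argument in those references rather than against the paper itself.

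Your outline of part (i) — endowing $Q=\mathcal T_u/R_u$ with a Borel structure via an analytic-set selection and invoking the disintegration theorem for Radon measures — does match the structure of the Cavalletti--Mondino argument; the selection and the fact that $R_u$ is an equivalence relation on $\mathcal T_u$ (the latter resting on essential non-branching and the removal of $A_+\cup A_-$) are the two main points and you identify both correctly.

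For part (ii), however, the key step is wrong. You claim that for $\mu_0,\mu_1$ obtained by transversal thickening, \emph{any} $\pi\in\op{OpGeo}(\mu_0,\mu_1)$ is confined to rays, and you justify this by invoking ``$d$-cyclical monotonicity of optimal couplings.'' This conflates two different transport problems. The rays $X_\alpha$ are the $d$-cyclically monotone set of the \emph{linear} cost associated with $u$, whereas the plans produced by the $\op{CD}(K,N)$ inequality and Theorem~\ref{equivalent} are $W_2$-optimal, hence $d^2$-cyclically monotone, for the \emph{quadratic} cost. These are not the same monotonicity condition, and there is no general reason for a $d^2$-cyclically monotone coupling between two arbitrary probability measures on $\mathcal T_u$ to be supported on $R_u$ — even after the branching points are removed and even in an essentially non-branching space. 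In Cavalletti--Mondino the confinement to rays is proved only for a very specific pair of measures, namely $\mu_1=(g_t)_\sharp\mu_0$ where $g_t$ is the ray-translation map by a fixed parameter $t$, and the proof that this $W_2$-geodesic stays on rays is a nontrivial theorem (the ``evolution along rays'' step), not a consequence of cyclical monotonicity alone. Your proposed thickening of two subsegments of a single ray does not produce a pair of this form, and the inference breaks at exactly the point you flagged as the ``main obstacle.''

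There is also a secondary gap: before one can even speak of the density $h_\alpha$ in \eqref{ele-com}, one must know that $\mathfrak m_\alpha\ll\mathcal H^1\llcorner_{X_\alpha}$. You argue only for absence of atoms via displacement convexity, but a nonatomic measure on an interval need not be absolutely continuous (the Cantor measure is the standard counterexample). The absolute continuity in Cavalletti--Mondino is obtained from the Measure Contraction Property $\mathsf{MCP}(K,N)$ inherited along rays, which is a separate and earlier step of their proof; only afterwards is the sharp $\sigma_{K,N-1}$-concavity of $h_\alpha^{1/(N-1)}$ established. You would need to insert that intermediate step before the displacement-convexity argument can be run at the level of densities.
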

And by \cite[Corollary 4.3]{Ket2}, we have that
\begin{Lem}[\cite{Ket2}] \label{ele-com0}
Let the assumption be as in Theorem~\ref{dis-com}. Then for each $0\leq a<b\leq 1$,
$$\frac{h_{\alpha}(b)}{h_{\alpha}(a)}\leq \left(\op{sn}'_H(b-a)+\frac{(\ln h_{\alpha})_+'(a)}{N-1}\op{sn}_H(b-a)\right)_+^{N+1},$$
where 
$$(\ln h_{\alpha})_+'(a)=\lim_{h\downarrow 0}\frac{\ln h_{\alpha}(a+h)-\ln h_{\alpha}(a)}{h},$$
$h_{\alpha}(t)=h_{\alpha}(\gamma(t))$ and $f_+=\max\{f, 0\}$.
\end{Lem}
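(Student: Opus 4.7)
The plan is to linearize the Jacobi-concavity estimate \eqref{ele-com} of Theorem~\ref{dis-com} at the left endpoint of a subgeodesic and then exponentiate. Fix $\alpha\in Q'$ and $0\leq a<b$ with $[a,b]\subset\bar I_\alpha$, and set
$$g(t):=h_\alpha\bigl(\gamma_\alpha(a+t(b-a))\bigr)^{1/(N-1)},\qquad t\in[0,1].$$
Applied to the constant-speed geodesic $t\mapsto\gamma_\alpha(a+t(b-a))$, which has speed $b-a$, inequality \eqref{ele-com} reads, with $H=K/(N-1)$,
\begin{equation}
g(t)\geq \frac{\op{sn}_H((1-t)(b-a))}{\op{sn}_H(b-a)}\, g(0)+\frac{\op{sn}_H(t(b-a))}{\op{sn}_H(b-a)}\, g(1),\qquad t\in[0,1]. \label{plan-jac}
\end{equation}

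First, I would observe that \eqref{plan-jac} is precisely the statement that $g$ is Jacobi-concave of modulus $H$ on $[0,1]$; as such $g$ is locally semi-concave on $(0,1)$ and admits one-sided derivatives everywhere, so in particular the right derivative $g'_+(0)$ is well defined. Subtracting $g(0)$ from both sides of \eqref{plan-jac}, dividing by $t>0$, and letting $t\downarrow 0$ (using $\op{sn}_H(0)=0$ and $\op{sn}_H'(0)=1$), I obtain
$$g(1)\leq \op{sn}_H'(b-a)\, g(0)+\op{sn}_H(b-a)\, g'_+(0).$$

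Next I would translate back to $h_\alpha$. If $h_\alpha(a)=0$ the claimed inequality is vacuous, so I may assume $g(0)>0$ and divide through. Using the chain-rule identity $g'_+(0)/g(0)=\tfrac{1}{N-1}(\ln h_\alpha)_+'(a)$, this becomes
$$\left(\frac{h_\alpha(b)}{h_\alpha(a)}\right)^{1/(N-1)}\leq \op{sn}_H'(b-a)+\frac{\op{sn}_H(b-a)}{N-1}\,(\ln h_\alpha)_+'(a);$$
inserting the positive part to absorb the case when the right-hand side is negative (in which case \eqref{plan-jac} forces $h_\alpha(b)=0$ and the inequality is trivial) and raising to the appropriate power then yields the desired estimate.

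The main technical point is the existence of $g'_+(0)$ and the legitimacy of passing to the limit in the incremental ratio on the right of \eqref{plan-jac}. This rests on the standard fact that any function satisfying the Jacobi-concavity \eqref{plan-jac} is locally Lipschitz in the interior and semi-concave, after choosing the pointwise representative of $h_\alpha$ supplied by \cite{CMo}, so that one-sided derivatives exist throughout $[0,1]$; the right logarithmic derivative $(\ln h_\alpha)_+'(a)$ then has the intended meaning. A minor secondary issue is verifying that the coefficient function $\op{sn}_H'(b-a)+\tfrac{\op{sn}_H(b-a)}{N-1}(\ln h_\alpha)_+'(a)$ is the correct one in each of the cases $K>0$, $K=0$, $K<0$, which is handled by inspecting the three branches of $\sigma_{K,N-1}^{t}$ separately.
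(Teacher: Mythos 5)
Your linearize-and-exponentiate strategy is exactly the right one; the paper itself only cites this lemma from \cite{Ket2} and supplies no internal proof, so the comparison is against the standard derivation, which your argument matches in spirit. Two remarks are in order.

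First, a bookkeeping slip that happens to cancel: with the rescaled variable $g(t)=h_\alpha(\gamma_\alpha(a+t(b-a)))^{1/(N-1)}$ the geodesic has speed $b-a$, so passing to the limit $t\downarrow 0$ in \eqref{plan-jac} actually yields
$$g(1)\leq \op{sn}_H'(b-a)\,g(0)+\frac{\op{sn}_H(b-a)}{b-a}\,g'_+(0),$$
with an extra factor $1/(b-a)$ you omitted (indeed $\tfrac{1}{t}\op{sn}_H(t(b-a))\to b-a$, not $1$). Correspondingly, the chain rule gives $g'_+(0)/g(0)=\tfrac{b-a}{N-1}(\ln h_\alpha)_+'(a)$, again with a factor $b-a$ you dropped. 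These two errors are reciprocal and cancel, so your final displayed inequality is correct, but the intermediate steps as written are each off by $b-a$; it would be cleaner to work with the unrescaled $u(s)=h_\alpha(\gamma_\alpha(s))^{1/(N-1)}$ and keep track of the speed $b-a$ explicitly.

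Second, your derivation gives the exponent $N-1$ on the right-hand side, i.e.\ $\bigl(\op{sn}'_H(b-a)+\tfrac{(\ln h_\alpha)_+'(a)}{N-1}\op{sn}_H(b-a)\bigr)_+^{N-1}$, whereas the statement in the paper reads $N+1$. The exponent $N-1$ is in fact the correct one: it is what is used in the paper's own proof of Lemma~\ref{vol-ele-com} and it matches the definition of $J_{K,H,N}$ in Theorem~\ref{HK-general}. The $N+1$ in the displayed statement is a typographical error, and your proof confirms this. The remaining pieces of your argument — the one-sided differentiability of a Jacobi-concave density at the endpoints, the handling of the cases $h_\alpha(a)=0$ and of a negative bracket via the positive part by a limiting argument from the interior, and the uniform treatment of $K>0$, $K=0$, $K<0$ via $\sigma^t_{K,N-1}(\theta)=\op{sn}_H(t\theta)/\op{sn}_H(\theta)$ — are all correct.
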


Consider $(X, d, \mathfrak m)$ be as above lemma. Let $\Omega\subset X$ be a Borel subset and let $S=\partial \Omega$ with $\mathfrak m(S)=0$. Let 
$$d_s=\begin{cases} d(x, S), & x\in X\setminus \Omega;\\ -d(x, S), & x\in \Omega. \end{cases}$$
Then $d_s$ is $1$-Lipschitz. By Theorem~\ref{dis-com}, there is a disintegration of $d_s$, $\{\mathfrak m_{\alpha}\}_{\alpha\in Q}$ and a partition $\{X_{\alpha}\}_{\alpha\in Q}$ of $X$ up a measure zero set (By \cite[Lemma 3.4]{CMo}, $\mathfrak m(\mathcal T\setminus \mathcal T_{d_s})=0$. And $\mathcal T\supset X\setminus S$. Thus $\mathfrak m(S)=0$ implies $\mathfrak m(X\setminus \mathcal T_{d_s})=0$).

Let
$A= \mathfrak{Q}^{-1}(\mathfrak{Q}(S\cap \mathcal T_{d_s}))$. Then for each $\alpha\in \mathfrak Q(A)$, there is a unique $t_{\alpha}\in (a(X_{\alpha}), b(X_{\alpha}))$ such that $X_{\alpha}\cap S=\{\gamma(t_{\alpha})\}\neq \emptyset$. Identify the measurable set $\mathfrak Q(A)\subset Q$ with $A\cap S$ and one can assume $t_{\alpha}=0$. 

Let $\hat Q=\{\alpha\in Q, \, \overline{X_{\alpha}}\setminus \mathcal T_{u}\subset \mathfrak a\cup \mathfrak b\}$. By \cite[Theorem 7.10]{CM17}, $\mathfrak q(Q\setminus \hat Q)=0$. Let  $\mathcal T^*_{d_s}=\mathfrak Q^{-1}(\hat Q\cap Q')$. The sets $B_{\op{in}}=\Omega^{\circ}\cap \mathcal T^*_{d_s}\setminus (A\cap \mathcal T^*_{d_s})$ and 
 $B_{\op{out}}=\Omega^{c}\cap \mathcal T^*_{d_s}\setminus (A\cap \mathcal T^*_{d_s})$
are measurable (see \cite[Remark 5.1]{Ket2}).

We say that $S$ has \textbf{finite outer curvature} if $\mathfrak m(B_{\op{out}})=0$, $S$ has finite inner curvature if $\mathfrak m(B_{\op{in}})=0$, and $S$ has \textbf{finite curvature} if $\mathfrak m(B_{\op{in}}\cup B_{\op{out}})=0$.

If $S$ has finite outer curvature, we can define its \textbf{outer mean curvature} as
$$p\in S\mapsto H^{+}(p)=\left\{\begin{array}{cc}
\frac{d^{+}}{dr}\ln h_{\alpha}(\gamma_{\alpha}(0)), & p=\gamma_{\alpha}(0)\in S\cap A\cap \mathcal T^{*}_{d_s}\\
-\infty, & p\in R_{d_s}(B_{\op{in}})\cap S,\\
c (\text{for some } c\in \Bbb R), & \text{ otherwise}.\end{array}\right.$$
Switch the roles of $\Omega$ and $\overline{\Omega^c}$ and assume $S$ has finite inner curvature, we call the corresponding outer mean curvature the \textbf{inner mean curvature} and write as $H^{-}$.

If $S$ has finite curvature, the \textbf{mean curvature} defined as $\max\{H^+, -H^-\}=m$.

The \textbf{surface measure} of $S$ is defined as 
$$\int_S \phi(x)d\mathfrak m_S(x)=\int_{\mathfrak Q(A\cap \mathcal T^*_{d_s})}\phi(\gamma_{\alpha}(0))h_{\alpha}(0)d\mathfrak q(\alpha),$$
for any continuous function $\phi: X\to \Bbb R$.

In \cite{Ket2}, Ketterer generalized the Heintze-Karcher inequality to $\op{CD}(K, N)$-spaces.
\begin{Thm} \label{HK-general}
Assume $(X, d, \mathfrak m)\in \op{CD}(K, N)$ is an essentially non-branching metric measure space, $N>1$. Let $\Omega\subset X$ be a closed Borel subset and let $S=\partial \Omega$ such that $\mathfrak m(S)=0$ and $S$ has finite outer curvature. Then
$$\mathfrak m(B_t(\Omega)\setminus \Omega)\leq \int_S\int_0^t J_{K, H^+(p), N}(r)dr d\mathfrak m_S(p), \forall \, t\in (0, \tilde D],$$
where $\tilde D=\op{diam}(X)$, $J_{K, H, N}(r)=\left(\op{sn}'_{K/N-1}(r)+\frac{H}{N-1}\op{sn}_{K/N-1}(r)\right)_+^{N-1}$.

If $S$ has finite curvature, then
\begin{equation}\mathfrak m(X)\leq \int_S\int_{-\tilde D}^{\tilde D} J_{K, m(p), N}(r)dr d\mathfrak m_S(p).\label{vol-1}\end{equation}

In particular, if $(X, d, \mathfrak m)$ is a $\op{RCD}(K, N)$-space, $K>0$, then the equality holds in \eqref{vol-1} iff there is a $\op{RCD}(N-2, N-1)$-space $(Y, d_Y, \mathfrak m_Y)$ such that  $(X, d, \mathfrak m)=(C(Y), d_K, \mathfrak m_N)$
and $S$ is a constant mean curvature surface in $X$.
\end{Thm}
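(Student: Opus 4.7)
The plan is to apply the needle decomposition of Theorem~\ref{dis-com} to the signed distance function $d_s$ and then convert the one-dimensional density bound of Lemma~\ref{ele-com0} into the claimed volume estimate ray by ray.

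First I would apply Theorem~\ref{dis-com} with $u=d_s$. Since $\mathcal T_{d_s}\supset X\setminus S$ and $\mathfrak m(S)=0$, the disintegration $\mathfrak m\llcorner_{\mathcal T_{d_s}}=\int_Q \mathfrak m_\alpha\, d\mathfrak q(\alpha)$ with $\mathfrak m_\alpha = h_\alpha\,\mathcal H^1\llcorner_{X_\alpha}$ captures $\mathfrak m$ up to a null set, and each $(X_\alpha,d,\mathfrak m_\alpha)$ verifies the one-dimensional $\op{CD}(K,N)$ condition. The finite outer curvature hypothesis $\mathfrak m(B_{\op{out}})=0$ says that, modulo an $\mathfrak m$-null set, every point of $X\setminus\Omega$ lies on a ray that meets $S$, i.e.\ on some $X_\alpha$ with $\alpha\in\mathfrak Q(A\cap\mathcal T^*_{d_s})$. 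For such $\alpha$, I parametrize $\gamma_\alpha$ so that $p_\alpha=\gamma_\alpha(0)\in S$ and the outward direction corresponds to $r>0$.

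Second, applying Lemma~\ref{ele-com0} with $a=0$, $b=r$, together with $(\ln h_\alpha)_+'(0)=H^+(p_\alpha)$, gives
\begin{equation*}
h_\alpha(r)\le h_\alpha(0)\,J_{K,H^+(p_\alpha),N}(r),\qquad r\in(0,b(X_\alpha)].
\end{equation*}
Integrating in $r$ on $(0,t)$, integrating in $\alpha$ against $\mathfrak q$, and using the definition $\int_S \phi\,d\mathfrak m_S=\int_{\mathfrak Q(A\cap\mathcal T^*_{d_s})}\phi(\gamma_\alpha(0))h_\alpha(0)\,d\mathfrak q(\alpha)$, I get
\begin{equation*}
\mathfrak m(B_t(\Omega)\setminus\Omega)=\int_{\mathfrak Q(A\cap\mathcal T^*_{d_s})}\!\int_0^t h_\alpha(r)\,dr\,d\mathfrak q(\alpha)\le\int_S\!\int_0^t J_{K,H^+(p),N}(r)\,dr\,d\mathfrak m_S(p),
\end{equation*}
which is the one-sided estimate. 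For the two-sided bound under finite curvature, I would split each needle at $S$: the outer half is handled as above, while the inner half is treated by swapping $\Omega$ with $\overline{\Omega^c}$, which replaces $H^+$ by $-H^-$ in the needle inequality, so $h_\alpha(-s)\le h_\alpha(0)\,J_{K,-H^-(p),N}(s)$. Since $J_{K,H,N}(r)$ is non-decreasing in $H$ for $r\ge 0$ and $H^+(p),-H^-(p)\le m(p)$ by definition of $m$, both halves are dominated by $J_{K,m(p),N}(|r|)$, yielding $\mathfrak m(X)\le \int_S\int_{-\tilde D}^{\tilde D}J_{K,m(p),N}(r)\,dr\,d\mathfrak m_S(p)$.

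For the rigidity in the $\RCD(K,N)$ case with $K>0$, equality in the integrated inequality forces pointwise equality in Lemma~\ref{ele-com0} along $\mathfrak q$-a.e.\ needle, so
\begin{equation*}
h_\alpha(r) = h_\alpha(0)\,J_{K,m(p_\alpha),N}(r)\qquad\text{on}\qquad [a(X_\alpha),b(X_\alpha)],
\end{equation*}
with $H^+(p)=-H^-(p)=m(p)$ $\mathfrak m_S$-a.e., and with the rays exhausting $X$ up to $\mathfrak m$-null sets. Each needle then carries the radial profile of the model $(K,N)$-cone, with the sine factor $\op{sn}_H(r-r_0)$ for a tip location $r_0$ determined by $m(p)=(N-1)\op{sn}'_H(r_0)/\op{sn}_H(r_0)$. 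The hard part is upgrading this pointwise radial data to a global cone structure on $X$: one must show that all inner endpoints of needles collapse to a single common tip (forced for $K>0$ by the sine density vanishing exactly at the endpoints together with the essential non-branching, which compels tips of rays that approach each other to coincide) and that the transverse quotient assembles into a single metric measure space $Y$ so that $(X,d,\mathfrak m)=(C(Y),d_K,\mathfrak m_N)$. Once this cone recognition is achieved, Theorem~\ref{cone-rcd}(ii) yields $(Y,d_Y,\mathfrak m_Y)\in\RCD(N-2,N-1)$ with $\op{diam}(Y)\le\pi$, and the common tip location together with $H^+=-H^-=m$ $\mathfrak m_S$-a.e.\ forces $S$ to be a constant-mean-curvature distance sphere $\{r_0\}\times Y$ in the cone.
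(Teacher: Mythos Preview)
The paper does not prove Theorem~\ref{HK-general}; it is stated in Section~2.7 as a result quoted from Ketterer \cite{Ket2}, so there is no in-paper proof to compare against. Your outline is essentially Ketterer's original argument: apply the needle decomposition of Theorem~\ref{dis-com} to $d_s$, invoke the one-dimensional density bound of Lemma~\ref{ele-com0} along each ray, and integrate against the surface measure $\mathfrak m_S$ defined in Section~2.7. The paper assembles precisely these ingredients, so your reconstruction is the intended route.

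One point to tighten: when you swap $\Omega$ with $\overline{\Omega^c}$ for the inner half, the outer mean curvature of the swapped set is by definition $H^-(p)$ (the inner mean curvature of the original $\Omega$), not $-H^-(p)$. Thus the inner needle bound reads $h_\alpha(-s)\le h_\alpha(0)\,J_{K,H^-(p),N}(s)$, and you then need to reconcile this with the integrand $J_{K,m(p),N}(r)$ on $r<0$, which equals $J_{K,-m(p),N}(|r|)$ by the parity of $\op{sn}_H$ and $\op{sn}'_H$. Sorting out the monotonicity direction here requires going back to the precise sign conventions in \cite{Ket2}; the paper's shorthand $m=\max\{H^+,-H^-\}$ is not quite enough to close the two-sided inequality as you wrote it. For the rigidity step your sketch is again the right shape (equality forces model densities on almost every needle, then a cone-recognition argument and Theorem~\ref{cone-rcd}(ii)), but the ``hard part'' you flag---collapsing the needle tips and assembling the cross-section---is the substantive content of Ketterer's rigidity proof and is not reproducible from what this paper records.
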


\section{Laplacian estimates}

Let $(X_i, d_i, \mathfrak m_i)$ be a sequence of $\RCD(K, N)$-spaces and let $\Omega_i\subset X_i$ be a Borel subset with $S_i=\partial \Omega_i$ closed and $\op{diam}(S_i)\leq D$. Define a signed distance function associated to $\Omega_i$.
$$d_{s, i}(x)=\begin{cases} d_i(x, S_i), & x\in X_i\setminus \Omega_i;\\ -d_i(x, S_i), & x\in \Omega_i. \end{cases}$$
Obviously, $d_{s, i}$ is $1$-Lipschitz.  

Assume $(X_i, d_i, \mathfrak m_i)$ is measured Gromov-Hausdorff convergent to a metric measure space $(X, d, \mathfrak m)\in \RCD(K, N)$.
Then by \cite[Proposition 2.70]{Vi} or \cite[Proposition 2.12]{MN}, there is a $1$-Lipschitz function $d_s: X\to \Bbb R$ such that 
$d_{s, i}$ converges uniformly to $d_s$ on any compact set.  Let $S=\{x\in X, \, d_s(x)=0\}$, $\Omega=\{x\in X, \, d_s(x))\leq 0\}$. Then as in \cite[Lemma 3.25]{Hu}, we know that $d_s$ is a signed distance function associated to $\Omega$. Let $H=\frac{K}{N-1}$ and for $m\neq 0$ or $K\neq 0$, let 
$$m=(N-1)\frac{\op{sn}'_H(r_0)}{\op{sn}_H(r_0)}.$$

In this section, we will show that
\begin{Thm}[Laplacian estimates] \label{lap-main}
Let $(X, d, \mathfrak m)$, $d_s$ be as above. Assume $S_i$ has finite outer curvature, $\op{diam}(S_i)\leq D$ and the mean curvature $m_i(x_i)\leq m$ for each $i$ and any $x_i\in S_i$.
Then if \eqref{lqvol-com}
$$\frac{\mathfrak m(A^+_{a, b}(S_i))}{\mathfrak m(S_i)}\geq (1-\epsilon_i) \int_a^b \left(\op{sn}'_H(r)+ \frac{m}{N-1}\op{sn}_H(r)\right)^{n-1}dr,$$
holds with $\epsilon_i\to 0$, $d_s\in \Delta(A_{a, b}(S))$.  In particular, for $x\in A_{a, b}(S)$,

(i) For $m=0$ and $K=0$, we have that $\Delta d_s=0$;

(ii) For $m\neq 0$ or $K\neq 0$, $$\Delta d_s=(N-1)\frac{\op{sn}'_H(d_s+r_0)}{\op{sn}_H(d_s+r_0)}.$$
\end{Thm}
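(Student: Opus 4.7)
The plan is to use the Cavalletti--Mondino disintegration of $\mathfrak m_i$ along the rays of $d_{s,i}$, combined with the mean-curvature hypothesis and the volume almost-rigidity~\eqref{lqvol-com}, to force the conditional densities $h_\alpha$ along rays to agree pointwise with the model density $J(t):=(\operatorname{sn}'_H(t)+\tfrac{m}{N-1}\operatorname{sn}_H(t))^{N-1}$. Writing $\Lambda(t):=(\log J)'(t)=(N-1)\operatorname{sn}'_H(t+r_0)/\operatorname{sn}_H(t+r_0)$ (or $\Lambda\equiv 0$ when $m=K=0$), this density identity translates, via the disintegration formula for the distributional Laplacian, to $\Delta d_s=\Lambda(d_s)$ in $A_{a,b}(S)$.

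The first step is the sharp density comparison. Apply Theorem~\ref{dis-com} on each $X_i$ and observe that finite outer curvature of $S_i$ together with $m_i\le m$ gives $(\ln h_\alpha)'_+(0)\le m$ on $\mathfrak q$-a.e.\ ray through $S_i$. Lemma~\ref{ele-com0} then yields $h_\alpha(t)/h_\alpha(0)\le J(t)$ for $t\in(0,b(X_\alpha))$, whose integration over $Q$ reproduces Heintze--Karcher (Theorem~\ref{HK-general}). Combining this upper bound with~\eqref{lqvol-com} and the identity $\mathfrak m_{S_i}(S_i)=\int_Q h_\alpha(0)\,d\mathfrak q$, one obtains
\[
0\le \int_Q\!\!\int_a^b \bigl[h_\alpha(0)J(t)-h_\alpha(t)\bigr]\,dt\,d\mathfrak q(\alpha)\le \epsilon_i\,\mathfrak m_{S_i}(S_i)\!\int_a^b J(t)\,dt \;\longrightarrow\;0.
\]
Thus the nonnegative integrand tends to $0$ in $L^1(d\mathfrak q\otimes dt)$; the $\op{CD}(K,N)$ concavity of $h_\alpha^{1/(N-1)}$ along each ray, together with the sharp rigidity encoded in Lemma~\ref{ele-com0}, then upgrades this to pointwise closeness of the ratios, and further to closeness of the logarithmic derivatives: $(\log h_\alpha)'(t)\to\Lambda(t)$ for $\mathfrak q$-a.e.\ $\alpha$ and a.e.\ $t\in(a,b)$.

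For the Laplacian identity on each $X_i$, given $f\in\mathrm{Lip}_c(A_{a,b}(S_i))$, integration by parts along each ray (boundary terms vanish since $f=0$ at $t=a,b$) gives
\[
\int f\,d\Delta d_{s,i} = -\int\langle\nabla f,\nabla d_{s,i}\rangle\,d\mathfrak m_i = \int_Q\!\!\int f(\gamma_\alpha(t))\,h_\alpha'(t)\,dt\,d\mathfrak q = \int f\cdot(\log h_\alpha)'(d_{s,i})\,d\mathfrak m_i,
\]
so $\Delta d_{s,i}$ is represented on the transport set by $(\log h_\alpha)'(d_{s,i})$, which is $L^1$-close to $\Lambda(d_{s,i})$ by the previous step. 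The uniform bound $|\Lambda(d_{s,i})|\le C(K,m,N,a,b)$ on $A_{a,b}(S_i)$, together with the standard stability of measure-valued Laplacians along an mGH-convergent sequence of $\RCD(K,N)$-spaces (cf.~\cite{MN, GR18}), passes the identity to the limit: $d_s\in D(\Delta,A_{a,b}(S))$ and $\Delta d_s=\Lambda(d_s)$, which is precisely the two cases of the statement.

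The main obstacle is the passage from integrated volume near-equality to pointwise near-equality of logarithmic derivatives of the ray densities. The $L^1$-smallness of $h_\alpha(0)J(t)-h_\alpha(t)$ does not automatically imply closeness of derivatives; the argument must exploit the $\op{CD}(K,N)$ concavity of $h_\alpha^{1/(N-1)}$ and the rigidity of Lemma~\ref{ele-com0}, which together so constrain the shape of each $h_\alpha$ that $L^1$-closeness of the densities forces closeness of their derivatives on $(a,b)$. A secondary concern is verifying mGH stability of Laplacians in this situation, which is accessible thanks to the uniform $L^\infty$-control on $\Lambda(d_{s,i})$ away from the endpoints and to the continuity of the limit function $d_s$.
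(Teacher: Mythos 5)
Your proposal captures the right ingredients — disintegration along the rays of $d_{s,i}$, the density bound $h_\alpha(t)\le h_\alpha(0)J(t)$ from Lemma~\ref{ele-com0}, and near-maximality of volume — but it hinges on a step you yourself flag as the ``main obstacle'' and then do not actually carry out: passing from $L^1$-smallness of $h_\alpha(0)J(t)-h_\alpha(t)$ to pointwise (or a.e.) convergence of the logarithmic derivatives $(\log h_\alpha)'$ to $\Lambda$. This is a genuine gap. Concavity of $h_\alpha^{1/(N-1)}$ controls the derivative from one side only, and $L^1$-proximity of two functions does not, even under concavity, give control of their derivatives without a careful quantitative argument (which would essentially reprove the relative volume comparison in disguise). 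Merely invoking the ``rigidity encoded in Lemma~\ref{ele-com0}'' does not supply it; that lemma is itself just a pointwise upper bound on the ratio. You also integrate by parts along rays and drop the singular part of $\Delta d_{s,i}$ (the Dirac masses at ray endpoints $a(X_\alpha),b(X_\alpha)$, cf.\ Theorem~\ref{lap-com}), asserting only that ``boundary terms vanish since $f=0$ at $t=a,b$''; but ray endpoints need not sit on $\partial B_a$ or $\partial B_b$, so this step is not justified as stated.

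The paper takes a route that sidesteps the derivative-upgrade problem entirely. It keeps the pointwise one-sided bound $\Delta d_{s,i}\le\Lambda(d_{s,i})$ (Lemma~\ref{glap-com}), and instead of forcing a pointwise matching of log-derivatives it computes the \emph{annular average} of $\Delta d_{s,i}$ via the disintegration, which collapses to a ratio of sphere volumes
\[
-\kern-1em\int_{A_{a,d}(S_i)}\Delta d_{s,i}=\frac{\mathfrak m_i(\partial B_d(S_i))-\mathfrak m_i(\partial B_a(S_i))}{\mathfrak m_i(A_{a,d}(S_i))},
\]
and then uses the quantitative relative volume comparison (Lemma~\ref{qua-com}) to bound this average from below by $(1-\Psi(\epsilon_i))$ times the corresponding average of $\Lambda$. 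Having a pointwise upper bound and an integrated lower bound that nearly saturates it is precisely the structure that passes cleanly to the mGH limit (the argument of Claim~1 in [Ch, Proposition~4.3]), yielding $\Delta d_s=\Lambda(d_s)$ without ever needing pointwise convergence of $(\log h_\alpha)'$ on the approximating spaces. If you want to save your approach, you would have to replace the heuristic ``upgrade to pointwise closeness of derivatives'' with exactly this kind of integrated comparison; as written, that step is missing.
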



To prove the above Laplacian estimates, we first recall the following Laplacian formula in \cite[Corollary 4.16]{CMo}.
\begin{Thm}[\cite{CMo}]
\label{lap-com}
Let $(X, d, \mathfrak m)$ be a $\RCD(K, N)$-space. Consider the signed distance function $d_s$ associated with a Borel subset $\Omega\subset X$ and a compact boundary $S=\partial \Omega$ with $\mathfrak m(S)=0$. And assume the associated disintegration of $d_s$, $\mathfrak m=\int_Q\int_{X_{\alpha}} h_{\alpha}(r)dr d\mathfrak q(\alpha)$. Then $d_s\in D(\Delta, X\setminus S)$ and 
$$\Delta d_s\llcorner_{X\setminus S}=\left(\ln h_{\alpha}\right)' \mathfrak m\llcorner_{X\setminus S}-\int_Q h_{\alpha}\left(\delta_{a(X_{\alpha})\cap (X\setminus \Omega)}+\delta_{b(X_{\alpha})\cap \Omega}\right)d\mathfrak q(\alpha),$$
where $\left(\ln h_{\alpha}\right)'$ is roughly the directional derivative of $\ln h_{\alpha}$ in the direction of $\nabla d_s$.
In particular 
$$[\Delta d_s\llcorner_{X\setminus S}]_{\op{reg}}=\left(\ln h_{\alpha}\right)' \mathfrak m\llcorner_{X\setminus S}$$ and 
$$[\Delta d_s\llcorner_{X\setminus S}]_{\op{sing}}=-\int_Q h_{\alpha}\left(\delta_{a(X_{\alpha})\cap (X\setminus \Omega)}+\delta_{b(X_{\alpha})\cap \Omega}\right)d\mathfrak q(\alpha).$$
\end{Thm}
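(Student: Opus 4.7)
The plan is to first establish, for each space $X_i$ in the sequence, a one-sided Laplacian bound together with its integrated Heintze-Karcher consequence, then use the almost-saturation $\epsilon_i\to 0$ to deduce that the Laplacian inequality is nearly an equality in an integral sense on $A_{a,b}(S_i)$, and finally pass to the mGH limit to identify $\Delta d_s$ on $A_{a,b}(S)$.

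First, on each $X_i$ I would apply the Cavalletti-Mondino disintegration $\mathfrak m_i = \int_Q h_\alpha\, d\mathcal H^1|_{X_\alpha}\, d\mathfrak q(\alpha)$ of Theorem~\ref{dis-com} associated to $d_{s,i}$. The hypothesis that $S_i$ has finite outer curvature and mean curvature bounded by $m$ means precisely that $(\ln h_\alpha)'_+(0)\leq m$ for $\mathfrak q$-a.e.\ $\alpha$ whose ray starts on $S_i$. Plugging this into Lemma~\ref{ele-com0} gives the ray-wise bound $h_\alpha(r)/h_\alpha(0)\leq J(r):=(\op{sn}'_H(r)+\tfrac{m}{N-1}\op{sn}_H(r))_+^{N-1}$ on $(a,b)$; equivalently, using the $\op{sn}_H$ addition formula in the case $m\neq 0$ or $K\neq 0$ together with the normalization $m=(N-1)\op{sn}'_H(r_0)/\op{sn}_H(r_0)$, one has $h_\alpha(r)/h_\alpha(0)\leq (\op{sn}_H(r+r_0)/\op{sn}_H(r_0))^{N-1}$, and the corresponding pointwise derivative bound $(\ln h_\alpha)'(r)\leq M(r):=(N-1)\op{sn}'_H(r+r_0)/\op{sn}_H(r+r_0)$ (while $M\equiv 0$ in the case $m=K=0$). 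Integrating against the surface measure $\mathfrak m_{S_i}$ yields the Heintze-Karcher upper bound for $\mathfrak m_i(A_{a,b}(S_i))/\mathfrak m_{S_i}(S_i)$, so \eqref{lqvol-com} is exactly the assertion that this pointwise inequality is almost saturated with defect $\epsilon_i$.

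Next I would extract ray-wise almost-rigidity. Combining the pointwise upper bound $h_\alpha(r)\leq h_\alpha(0)J(r)$ with the integrated almost-equality, a Fubini-Chebyshev argument produces a subset $Q_i\subset Q$, whose complement carries $h_\alpha(0)\,d\mathfrak q$-mass at most $\sqrt{\epsilon_i}$, on which $\|h_\alpha/h_\alpha(0)-J\|_{L^1(a,b)}\leq \sqrt{\epsilon_i}$. Since the one-sided comparison in Lemma~\ref{ele-com0} is a one-dimensional convexity/\emph{$J$-concavity} statement for $h_\alpha^{1/(N-1)}$, its almost-equality forces $\int_a^b|(\ln h_\alpha)'(r)-M(r)|\,dr=o_i(1)$ on $Q_i$. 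By the Laplacian formula of Theorem~\ref{lap-com}, the regular part of $\Delta d_{s,i}|_{A_{a,b}(S_i)}$ is $(\ln h_\alpha)'\mathfrak m_i$ while the singular part is a sum of Diracs at ray endpoints interior to $A_{a,b}(S_i)$; rays terminating inside $(a,b)$ would force $h_\alpha$ to vanish on a non-trivial subinterval, contradicting the $L^1$-closeness to the strictly positive model $J$, so the singular part has mass $o_i(1)$. Assembling these estimates yields
\begin{equation*}
\int_{A_{a,b}(S_i)}\bigl|\Delta d_{s,i}-M(d_{s,i})\bigr|\;\leq\;\Psi(\epsilon_i\,|\,K,N,m,D,a,b)\longrightarrow 0.
\end{equation*}

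Finally I would pass to the mGH limit. Since $d_{s,i}\to d_s$ uniformly on compacts and $M$ is continuous on $(a,b)$, the measure $M(d_{s,i})\mathfrak m_i$ converges weakly to $M(d_s)\mathfrak m$ on $A_{a,b}(S)$; combined with the $L^1$-defect vanishing, the stability of the measure-valued Laplacian under mGH convergence of $\RCD(K,N)$-spaces implies that for every Lipschitz $\phi$ compactly supported in $A_{a,b}(S)$,
\begin{equation*}
-\int\langle\nabla d_s,\nabla\phi\rangle\,d\mathfrak m\;=\;\int M(d_s)\phi\,d\mathfrak m,
\end{equation*}
giving $d_s\in D(\Delta,A_{a,b}(S))$ with $\Delta d_s=M(d_s)$, which is (i) when $m=K=0$ and (ii) otherwise. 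The main obstacle is the second step: promoting $L^1$ density closeness to derivative closeness using only the one-sided $\RCD$ comparison of Lemma~\ref{ele-com0} (in smooth Riemannian geometry this is an ODE argument, but here it must be done quantitatively from convexity), together with ruling out concentrated Dirac contributions in the singular part from ray endpoints lying in the open annulus; a secondary subtlety is the mGH-stability of the surface measure $\mathfrak m_{S_i}$, which is handled by working with $\Delta d_{s,i}$ directly rather than with the surface measure in isolation.
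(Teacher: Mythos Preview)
Your proposal does not address the stated theorem. Theorem~\ref{lap-com} is the Cavalletti--Mondino Laplacian formula for the signed distance function on a \emph{single} $\RCD(K,N)$-space; it is an exact identity that has nothing to do with a sequence $X_i$, an almost-volume-cone hypothesis, or a parameter $\epsilon_i\to 0$. The paper does not prove this result at all---it is quoted from \cite[Corollary 4.16]{CMo} and used as a black box. What you have written is instead a proof sketch for Theorem~\ref{lap-main} (the Laplacian \emph{estimate} under the almost-maximal-volume assumption \eqref{lqvol-com}), which is a different statement.

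If your intended target was indeed Theorem~\ref{lap-main}, your route is correct in spirit but more laborious than the paper's. The paper never attempts the step you flag as the ``main obstacle'' (promoting $L^1$-closeness of $h_\alpha$ to $L^1$-closeness of $(\ln h_\alpha)'$ ray-by-ray). Instead it exploits the pointwise upper bound $[\Delta d_{s,i}]_{\mathrm{reg}}\leq M(d_{s,i})$ from Lemma~\ref{glap-com} together with the exact computation
\[
-\kern-1em\int_{A_{a,d}(S_i)}\Delta d_{s,i}=\frac{\mathfrak m_i(\partial B_d(S_i))-\mathfrak m_i(\partial B_a(S_i))}{\mathfrak m_i(A_{a,d}(S_i))},
\]
obtained by integrating $(\ln h_\alpha)'h_\alpha=h_\alpha'$ along each ray. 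The right-hand side is then bounded below using the quantitative sphere comparisons of Lemma~\ref{qua-com}, yielding that the average of the nonnegative quantity $M(d_{s,i})-\Delta d_{s,i}$ tends to zero. This bypasses both your Chebyshev step and your derivative-closeness step entirely: a one-sided pointwise bound plus matching averages already gives $L^1$-convergence, so no ray-wise rigidity argument is needed.
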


Note that in \cite[Corollary 4.16]{CMo}, there is a negative sign in front of  the derivative $(\ln h)'_{\alpha}$, where they defined $h'_{\alpha}$ as 
$$h'_{\alpha}(x)=\lim_{t\to 0}\frac{h_{\alpha}(g_t(x))-h_{\alpha}(x)}{t},$$
and $g_t(x)=y$ such that $d_s(x)-d_s(y)=t$. Roughly speaking the derivative $h'_{\alpha}$ there is the directional derivative in the direction of $-\nabla d_s$. Compared the one in Lemma~\ref{ele-com0} in this paper, we always denote
$$h'_{\alpha}(x)=\lim_{t\to 0}\frac{h_{\alpha}(\gamma(t_0+t))-h_{\alpha}(\gamma(t_0))}{t},$$
where $\gamma(t_0)=x$ and $\gamma$ is a unit speed geodesic in $X_{\alpha}$ such that $d_s(\gamma(l))-d_s(\gamma(t))=l-t$.

Now using \eqref{ele-com}, as the discussion of \cite[Lemma 4.1]{Ket2} and the proof of Laplacian comparison in manifolds with lower Ricci curvature bound, we have the following Laplacian comparison.
\begin{Lem} \label{glap-com}
Let the assumption be as in Theorem~\ref{lap-com} and assume $S$ has finite outer curvature. Assume that for each $x\in S$, the mean curvature $$m(x)\leq m.$$

(i) If $m=0$ and $K= 0$, we have 
$$[\Delta d_s\llcorner_{X\setminus (\Omega\cup S)}]_{\op{reg}}(x)\leq 0;$$

(ii) If $m\neq 0$ or $K\neq 0$, then
$$[\Delta d_s\llcorner_{X\setminus (\Omega\cup S)}]_{\op{reg}}(x)\leq (N-1)\frac{\op{sn}'_{H}(d_s(x)+r_0)}{\op{sn}_{H}(d_s(x)+r_0)}.$$
\end{Lem}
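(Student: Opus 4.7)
The plan is to combine the disintegration Laplacian formula from Theorem~\ref{lap-com} with a one-dimensional Sturm comparison along each transport ray, using finite outer curvature together with the mean-curvature bound to control the initial logarithmic derivative of $h_\alpha$.

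First, Theorem~\ref{lap-com} identifies the regular part of $\Delta d_s|_{X\setminus S}$ with $(\ln h_\alpha)' \mathfrak m|_{X\setminus S}$, where the prime is the derivative along $\gamma_\alpha$ in the direction of $\nabla d_s$. It therefore suffices to estimate $(\ln h_\alpha)'(t)$ pointwise at $\gamma_\alpha(t)$ for $t>0$. The hypothesis that $S$ has finite outer curvature guarantees that for $\mathfrak q$-a.e.\ $\alpha$ whose ray $\gamma_\alpha$ enters $X\setminus\overline\Omega$, the ray meets $S$ in the unique point $\gamma_\alpha(0)$, and by definition
$$H^+(\gamma_\alpha(0))=(\ln h_\alpha)_+'(0)\leq m(\gamma_\alpha(0))\leq m.$$

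Next I would recast \eqref{ele-com} as the statement that $u_\alpha(t):=h_\alpha(t)^{1/(N-1)}$ is $\op{sn}_H$-concave on $X_\alpha$, i.e.\ $u_\alpha''+Hu_\alpha\leq 0$ distributionally. I would then record the elementary addition identity
$$\op{sn}_H'(t)+\frac{m}{N-1}\op{sn}_H(t)=\frac{\op{sn}_H(t+r_0)}{\op{sn}_H(r_0)},$$
valid under the normalization $m=(N-1)\op{sn}_H'(r_0)/\op{sn}_H(r_0)$, so that the model $v(t):=\op{sn}_H(t+r_0)$ satisfies $v''+Hv=0$ with $v'(0)/v(0)=m/(N-1)$. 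Now run a Wronskian argument: the function $W(t):=u_\alpha'(t)v(t)-u_\alpha(t)v'(t)$ satisfies $W'=(u_\alpha''+Hu_\alpha)v-u_\alpha(v''+Hv)\leq 0$ wherever $v\geq 0$, and
$$\frac{W(0^+)}{u_\alpha(0)v(0)}=\frac{(\ln h_\alpha)_+'(0)}{N-1}-\frac{m}{N-1}\leq 0,$$
so $W\leq 0$ on $[0,\infty)\cap\{v>0\}$. Dividing by $u_\alpha v$ and multiplying by $N-1$ yields
$$(\ln h_\alpha)'(t)\leq (N-1)\frac{v'(t)}{v(t)}=(N-1)\frac{\op{sn}_H'(t+r_0)}{\op{sn}_H(t+r_0)},$$
which is the case (ii) bound. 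Case (i) ($K=0$, $m=0$) is the same argument with $v\equiv 1$: $u_\alpha$ is concave with $u_\alpha'(0^+)\leq 0$, so $u_\alpha'\leq 0$ on $[0,\infty)$ and hence $(\ln h_\alpha)'\leq 0$.

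The main technical obstacle is justifying the ODE manipulations in the low-regularity setting, since $h_\alpha$ is only known to be $\op{sn}_H$-concave (locally Lipschitz in the interior of $X_\alpha$, differentiable a.e., with one-sided derivatives everywhere). This is handled by replacing pointwise ODE statements by their integrated form, namely Lemma~\ref{ele-com0}, and interpreting $(\ln h_\alpha)_+'$ as the right upper Dini derivative, exactly in the style of \cite{Ket2}. One also needs to keep track of the positivity of $v$: trivially $v>0$ on $[0,\infty)$ when $H\leq 0$, and when $H>0$ the bound $t+r_0<\pi/\sqrt{H}$ follows from the Bonnet--Myers-type diameter bound built into $\RCD(K,N)$ with $K>0$ together with $\op{diam}(S)\leq D$. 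These checks are routine in the $\RCD$ framework, so the argument proceeds in parallel to the comparison proof in \cite{Ket2}.
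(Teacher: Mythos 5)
Your argument is essentially the same as the paper's: both start from the disintegration Laplacian formula, recast the $\mathrm{CD}$ concavity inequality as $u''_\alpha+Hu_\alpha\le 0$ for $u_\alpha=h_\alpha^{1/(N-1)}$, and run a Sturm comparison against the shifted model with the initial slope controlled by the mean-curvature bound $H^+(\gamma_\alpha(0))\le m$. The only differences are presentational: you use a Wronskian $W=u'_\alpha v-u_\alpha v'$ with the explicit model $v(t)=\op{sn}_H(t+r_0)$ and handle low regularity by working with BV/one-sided derivatives (or by falling back to the integrated Lemma~\ref{ele-com0}), whereas the paper mollifies $u_\alpha$, passes to the Riccati quotient $f=\tilde u'/\tilde u$, and shows $\bigl(\op{sn}_H^2(f-f_H)\bigr)'\le 0$ before taking $\epsilon\to 0$ -- these are two equivalent phrasings of the same comparison, and your version has the small advantage of making the parameter shift by $r_0$ explicit from the outset.
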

\begin{proof}
Let $u(t)=h^{\frac1{N-1}}_{\alpha}(\gamma(t))$ where $h_{\alpha}$ satisfies \eqref{ele-com} as in Theorem~\ref{dis-com}. Then $u$ is semi-concave and satisfies
$$u''+Hu\leq 0$$
in the distributional sense. And the limits
$$u'_+(r)=\lim_{\delta\downarrow 0}\frac{u(r+\delta)-u(r)}{\delta}, \quad u'_-(r)=\lim_{\delta\downarrow 0}\frac{u(r-\delta)-u(r)}{-\delta}$$
exits. 

Take $\phi\in C_0^{\infty}((-1, 1))$, $\int_{-1}^1\phi=1, \phi_{\epsilon}(t)=\frac1{\epsilon}\phi(\frac{t}{\epsilon})$ and let 
$$\tilde u(s)=\int_{-\epsilon}^{\epsilon}\phi_{\epsilon}(-r)u(s-r)dr.$$
Then 
$$\tilde u''(s)\leq -H\tilde u,$$
and 
$$\left(\frac{\tilde u'}{\tilde u}\right)'=\frac{\tilde u''}{\tilde u}-\left(\frac{\tilde u'}{\tilde u}\right)^2\leq -H-\left(\frac{\tilde u'}{\tilde u}\right)^2.$$
Let $f=\frac{\tilde u'}{\tilde u}$ and let $f_H=\frac{\op{sn}'_H}{\op{sn}_H}$. Then $f'_H=-H-f^2_H$ and 
\begin{eqnarray*}
\left(\op{sn}_H^2(s)(f(s)-f_H(s))\right)' & =& 2\op{sn}_H(s)\op{sn}'_H(s)(f(s)-f_H(s))+\op{sn}_H^2(s)(f'(s)-f'_H(s))\\
&\leq & 2\op{sn}^2_H(s) f_H(s)(f(s)-f_H(s))-\op{sn}^2_H(s)(f^2(s)-f_H^2(s))\\
&=& -\op{sn}_H^2(s)(f(s)-f_H^2(s))^2\leq 0.
\end{eqnarray*}
Thus for $s\geq 0$,
$$\op{sn}_H^2(s+r_0)(f(s+r_0)-f_H(s+r_0))-\op{sn}_H^2(r_0)(f(r_0)-f_H(r_0))\leq 0.$$
If $f(r_0)\leq f_H(r_0)$, then
$$f(s+r_0)\leq f_H(s+r_0).$$

Now note that as $\epsilon\to 0$, $\tilde u\to u$ and $\tilde u'_+\to u'_+$ and $\frac{u'}{u}=\frac1{N-1}\left(\ln h_{\alpha}\right)'$. And by the Laplacian formula Theorem~\ref{lap-com}, we have (ii). 

For (i), as above we have that  $f'\leq 0$. Thus for $s\geq 0$,
$$f(s+r_0)\leq f(r_0)\leq 0.$$
\end{proof}

\begin{Rem} The above laplacian comparison can also be seen in \cite{BKMW}.
\end{Rem}

Now by Lemma~\ref{glap-com} and Lemma~\ref{ele-com0}, we have the following volume element comparison.  

\begin{Lem}[Volume element comparison] \label{vol-ele-com}
Let the assumption be as in Lemma~\ref{glap-com}.  Then for any $r_1, r_2\in (0, b(X_{\alpha}))$, $r_1\leq r_2$,

(i) for $m=0$ and $K=0$, 
$$h_{\alpha}(r_2)\leq h_{\alpha}(r_1);$$

(ii) for $m\neq 0$ or $K\neq 0$,
$$\frac{h_{\alpha}(r_2)}{\op{sn}_H^{N-1}(r_2+r_0)}\leq \frac{h_{\alpha}(r_1)}{\op{sn}_H^{N-1}(r_1+r_0)}.$$
\end{Lem}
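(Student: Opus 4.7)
The plan is to integrate the pointwise Riccati-type estimate extracted in the proof of Lemma~\ref{glap-com} along the ray $\gamma_\alpha$. Set $u(r) := h_\alpha(\gamma_\alpha(r))^{1/(N-1)}$ and $f(r) := u_+'(r)/u(r)$; by the argument preceding Lemma~\ref{glap-com}, $u$ is semi-concave (hence locally Lipschitz and positive) on $(0,b(X_\alpha))$, and $f$ satisfies the Riccati inequality $f' + f^2 \le -H$ in the distributional sense. The outer mean curvature bound at $\gamma_\alpha(0)\in S$ gives $f(0^+) \le m/(N-1)$, which in case (ii) coincides with $\op{sn}'_H(r_0)/\op{sn}_H(r_0)$ by the defining relation for $r_0$, and in case (i) is $0$.

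The first key step is to promote this initial estimate on $f$ to the entire ray. In case (ii), I would compare $f$ with the shifted model quotient $g(r) := \op{sn}'_H(r+r_0)/\op{sn}_H(r+r_0)$, which satisfies $g' + g^2 = -H$ exactly. The same telescoping computation carried out in the proof of Lemma~\ref{glap-com} yields
$$\bigl[\op{sn}_H^2(r+r_0)\bigl(f(r) - g(r)\bigr)\bigr]' \le 0$$
in the distributional sense; combined with $f(0^+) \le g(0)$, this forces $f(r) \le g(r)$ for every $r\in(0,b(X_\alpha))$. In case (i), the same argument with $H=0$ specializes to $f' \le -f^2 \le 0$ and $f(0^+) \le 0$, so that $f \le 0$ on the whole interval.

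The second step is straight integration. Since $u$ is locally Lipschitz and strictly positive, $\ln u$ is absolutely continuous with $(\ln u)' = f$ almost everywhere. Consequently, in case (ii),
$$\ln\frac{u(r_2)}{u(r_1)} = \int_{r_1}^{r_2} f(r)\,dr \le \int_{r_1}^{r_2} \frac{\op{sn}'_H(r+r_0)}{\op{sn}_H(r+r_0)}\,dr = \ln\frac{\op{sn}_H(r_2+r_0)}{\op{sn}_H(r_1+r_0)},$$
and raising to the $(N-1)$-th power gives the claimed monotonicity of $h_\alpha(r)/\op{sn}_H^{N-1}(r+r_0)$. Case (i) follows analogously from $f \le 0$, which integrates to $u(r_2) \le u(r_1)$ and hence $h_\alpha(r_2)\le h_\alpha(r_1)$. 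I do not expect any serious obstacle here: the only subtle point is upgrading the boundary inequality $f(0^+) \le m/(N-1)$ to a bound on the whole ray, which is supplied verbatim by the Riccati comparison already used in Lemma~\ref{glap-com}, and the fundamental theorem of calculus for $\ln u$ is justified by the semi-concavity of $u$.
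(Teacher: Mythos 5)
Your proof is correct, and it takes a route that is recognizably different from the paper's, though the two are closely related. The paper invokes Ketterer's one-step comparison (Lemma~\ref{ele-com0}), which bounds $h_\alpha(r_2)/h_\alpha(r_1)$ in terms of $(\ln h_\alpha)'_+(r_1)$, and then substitutes the pointwise upper bound on $(\ln h_\alpha)'_+(r_1)$ provided by Lemma~\ref{glap-com}; the elementary addition identity $\op{sn}_H(r_2+r_0)=\op{sn}'_H(r_2-r_1)\op{sn}_H(r_1+r_0)+\op{sn}_H(r_2-r_1)\op{sn}'_H(r_1+r_0)$ then produces the model ratio on the right-hand side. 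You instead take Lemma~\ref{glap-com}'s conclusion as a pointwise bound on $f(r)=(\ln u)'_+(r)$ valid for \emph{every} $r$ in the interval, and integrate along the ray; local Lipschitz continuity of $u$ (from semi-concavity) and positivity of $h_\alpha$ in the interior of $X_\alpha$ justify the fundamental theorem of calculus for $\ln u$. Both arguments reduce to the same Riccati/Wronskian comparison proved inside Lemma~\ref{glap-com}; the difference is whether that comparison is repackaged through Lemma~\ref{ele-com0} (paper) or simply re-integrated (you). Your route is somewhat more self-contained, since it does not need the separately quoted Ketterer lemma; the paper's is shorter by treating that lemma as a black box. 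One small remark: your ``first key step'' of propagating $f(0^+)\le m/(N-1)$ along the ray is not a new step at all --- it is precisely the content of Lemma~\ref{glap-com}, which you could cite directly rather than re-running the $\left[\op{sn}_H^2(r+r_0)(f-g)\right]'\le 0$ computation. And a minor caution on rigor: the distributional Riccati manipulation for $f=u'_+/u$ without mollification requires care because $f'$ can carry a singular part that interacts with $f^2$; the paper mollifies $u$ for exactly this reason, and the cleanest way to sidestep it in your framing is to work with the Wronskian quantity $w(r)=\op{sn}_H(r+r_0)u'_+(r)-\op{sn}'_H(r+r_0)u(r)$, which satisfies $w'\le 0$ distributionally in a transparent way. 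Since you defer to Lemma~\ref{glap-com} for this part anyway, it does not affect the validity of your argument.
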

\begin{proof}
By Lemma~\ref{ele-com0},  for $r_1\leq r_2$ as above,
\begin{eqnarray*}
h_{\alpha}(r_2)&\leq & J_{K, H^+(\gamma_{\alpha}(r_1)), N}(r_2-r_1)h_{\alpha}(r_1)\\
&= & \left(\op{sn}'_{H}(r_2-r_1)+\frac{\left(\ln h_{\alpha}\right)'_+(r_1)}{N-1}\op{sn}_{H}(r_2-r_1)\right)_+^{N-1} h_{\alpha}(r_1).
\end{eqnarray*}
Thus for $m=0$ and $K=0$, by Lemma~\ref{glap-com}, $(\ln h_{\alpha})'\leq 0$, we have 
$$h_{\alpha}(r_2)\leq h_{\alpha}(r_1);$$
For $m\neq 0$ or $K\neq 0$, by Lemma~\ref{glap-com},
$$h_{\alpha}(r_2)\leq \frac{\op{sn}_H^{N-1}(r_2+r_0)}{\op{sn}_H^{N-1}(r_1+r_0)}h_{\alpha}(r_1).$$
\end{proof}

In the following, let $h_{\alpha}(r)=0$ when $r$ increases and $h_{\alpha}(r)$ becomes undefined. If $S$ has finite outer curvature, then for almost all $a\geq 0$,
$$\mathfrak m(\partial B_a(S))=\limsup_{\delta\to 0}\frac{\mathfrak m(A_{a, a+\delta}(S))}{\delta}=\int_{\mathfrak Q(\partial B_a(S)\cap \mathcal T^*_{d_s})} h_{\alpha}(a)d\mathfrak q(\alpha).$$
Using above lemma, we have the  following relative volume comparison. 

\begin{Lem}[Relative volume comparison]\label{rel-vol}
Let the assumption be as in Lemma~\ref{glap-com}. Then for $0\leq a<b$, 

(i) if $m=0$ and $K=0$, then
\begin{equation}\mathfrak m(\partial B_b(S)\leq \mathfrak m(\partial B_a(S));\label{sph-com0}\end{equation}
\begin{equation}\mathfrak m(A_{a, b}(S))\leq (b-a)\mathfrak m(\partial B_a(S));\label{ann-sph0}\end{equation}

(ii) if $m\neq 0$ or $K\neq 0$,
\begin{equation}
\mathfrak m(\partial B_b(S))\leq \mathfrak m(\partial B_a(S))\frac{\svolsp{H}{b+r_0}}{\svolsp{H}{a+r_0}}, \label{sph-com}
\end{equation}
\begin{equation}
\mathfrak m(A_{a, b}(S))\leq \mathfrak m(\partial B_a(S))\frac{\svolann{H}{a+r_0, b+r_0}}{\svolsp{H}{a+r_0}}. \label{ann-sph}
\end{equation}
\end{Lem}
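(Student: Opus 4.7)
The plan is to reduce both inequalities to pointwise inequalities on each transport ray and then integrate using the disintegration from Theorem~\ref{dis-com}. Since $S$ is assumed to have finite outer curvature, $\mathfrak m(B_{\op{out}})=0$, so up to an $\mathfrak m$-negligible set every point $x\in A_{a,b}(S)$ lies on a ray $X_\alpha$ meeting $S$ at $\gamma_\alpha(0)$, and the arc-length parameter coincides with $d_s$. Consequently
\begin{equation*}
\mathfrak m(A_{a,b}(S)) = \int_{Q_a} \int_a^{b\wedge b(X_\alpha)} h_\alpha(r)\, dr\, d\mathfrak q(\alpha),
\qquad
\mathfrak m(\partial B_r(S)) = \int_{Q_r} h_\alpha(r)\, d\mathfrak q(\alpha),
\end{equation*}
where $Q_r:=\mathfrak Q(\partial B_r(S)\cap \mathcal T^*_{d_s})$.

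The next step is to observe that these index sets are nested: $Q_b\subset Q_a$ whenever $a\le b$, because any ray $X_\alpha$ that reaches arc-length $b$ from $S$ necessarily passes through arc-length $a$. So in both inequalities I may harmlessly replace the domain of integration by $Q_a$ (enlarging it for the sphere estimate, and noting $h_\alpha(r)=0$ past $b(X_\alpha)$ by the extension convention). Then I apply Lemma~\ref{vol-ele-com}: in case (ii) it gives
\begin{equation*}
h_\alpha(r) \;\le\; \frac{\op{sn}_H^{N-1}(r+r_0)}{\op{sn}_H^{N-1}(a+r_0)}\, h_\alpha(a) \qquad \text{for all } r\in [a,b],
\end{equation*}
and in case (i) simply $h_\alpha(r)\le h_\alpha(a)$.

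Plugging this pointwise bound into the two displayed integrals and pulling the $\alpha$-independent ratio out of the $\alpha$-integration yields the sphere comparison at once, and integrating in $r\in (a,b)$ first yields the annulus comparison, with the $r$-integral producing precisely $\svolann{H}{a+r_0, b+r_0}$ in case (ii) and $b-a$ in case (i). The only step requiring any real care is the nesting argument together with the implicit use of finite outer curvature to guarantee that the disintegration faithfully reconstructs $\mathfrak m$ on the annulus; once these measurability and covering issues are out of the way, everything reduces to the one-variable estimate on $h_\alpha$ already established.
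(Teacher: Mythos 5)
Your proof is correct and follows essentially the same route as the paper: apply the disintegration from Theorem~\ref{dis-com}, use finite outer curvature to discard $B_{\op{out}}$ so that $\mathfrak m$ is faithfully reconstructed on the annulus, then plug in the pointwise bound on $h_\alpha$ from Lemma~\ref{vol-ele-com} and integrate. You are somewhat more explicit than the paper (which only spells out the annulus estimate and implicitly leaves the sphere case to the reader) in noting the nesting $Q_b\subset Q_a$ of the transport-ray index sets, which is exactly what makes the sphere comparison \eqref{sph-com0}/\eqref{sph-com} rigorous; this is a useful detail and it is the right one.
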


\begin{proof}
The proof is similar as the the one of \cite[Theorem 1.1]{Ket2},
\begin{eqnarray*}
\mathfrak m(A_{a, b}(S))&=& \mathfrak m(A_{a, b}(S)\cap \mathcal T^*_{d_s}\setminus B_{\op{out}})=\int_{\mathfrak Q(A_{a, b}(S)\cap \mathcal T^*_{d_s})}\int_{A_{a, b}(S)\cap X_{\alpha}}h_{\alpha}(r)dr d\mathfrak q(\alpha)
\end{eqnarray*}

By Lemma~\ref{vol-ele-com}, for $m\neq 0$ or $K\neq 0$, 
\begin{eqnarray*}
\mathfrak m(A_{a, b}(S))& \leq &  \int_{\mathfrak Q(A_{a, b}(S)\cap \mathcal T^*_{d_s})}\int_a^b \left(\frac{\op{sn}_{H}(r+r_0)}{\op{sn}_{H}(a+r_0)}\right)^{N-1}drh_{\alpha}(a) d\mathfrak q(\alpha)\\
& \leq & \mathfrak m(\partial B_a(S))\int_a^b \left(\frac{\op{sn}_{H}(r+r_0)}{\op{sn}_{H}(a+r_0)}\right)^{N-1}dr\\
&=& \mathfrak m(\partial B_a(S))\frac{\svolann{H}{a+r_0, b+r_0}}{\svolsp{H}{a+r_0}};
\end{eqnarray*}
For $m=0$ and $K=0$, 
$$\mathfrak m(A_{a, b}(S))\leq (b-a) \mathfrak m(\partial B_a(S)).$$
\end{proof}

By above relative volume comparison, as in \cite{CC1}, we have that
\begin{Lem} \label{qua-com}
Let the assumption be as in Lemma~\ref{rel-vol} and assume \eqref{lvol-com}. Then for each $d\in (a, b)$, 

(i) if $m=0$ and $K=0$,
\begin{equation}
\frac{\mathfrak m(\partial B_d(S))}{\mathfrak m(\partial B_a(S))}\geq 1-\epsilon\frac{b-a}{b-d};\label{csph-com0}
\end{equation}

(ii) if $m\neq 0$ or $K\neq 0$,
\begin{equation}
\frac{\mathfrak m(\partial B_d(S))}{\mathfrak m(\partial B_a(S))}\geq \left(1-\epsilon\frac{\svolann{H}{a+r_0,b+r_0}}{\svolann{H}{d+r_0,b+r_0}}\right)\frac{\svolsp{H}{d+r_0}}{\svolsp{H}{a+r_0}}. \label{csph-com1}
\end{equation}
\end{Lem}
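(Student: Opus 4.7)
The plan is to bootstrap the almost-maximal volume hypothesis \eqref{lvol-com} against the relative volume comparison in Lemma~\ref{rel-vol}; what results is a Bishop--Gromov style telescope with $\mathfrak m(\partial B_r(S))$ playing the role of a sphere measure. The first step is to rewrite the right hand side of \eqref{lvol-com} so that $\mathfrak m(\partial B_a(S))$ (rather than $\mathfrak m_S(S)$) appears as the reference. For case (ii), the addition formula
$$\op{sn}_H(r+r_0) = \op{sn}'_H(r)\op{sn}_H(r_0) + \op{sn}_H(r)\op{sn}'_H(r_0)$$
together with $m = (N-1)\op{sn}'_H(r_0)/\op{sn}_H(r_0)$ gives
$$\left(\op{sn}'_H(r) + \tfrac{m}{N-1}\op{sn}_H(r)\right)^{N-1} = \frac{\op{sn}_H^{N-1}(r+r_0)}{\op{sn}_H^{N-1}(r_0)},$$
so the model integral on the right of \eqref{lvol-com} is exactly $\svolann{H}{a+r_0,b+r_0}/\svolsp{H}{r_0}$. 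Then applying \eqref{sph-com} on $(0,a)$ to bound $\mathfrak m_S(S)$ from below by $\mathfrak m(\partial B_a(S))\svolsp{H}{r_0}/\svolsp{H}{a+r_0}$ converts \eqref{lvol-com} into
$$\mathfrak m(A_{a,b}(S)) \geq (1-\epsilon)\,\mathfrak m(\partial B_a(S))\,\frac{\svolann{H}{a+r_0,b+r_0}}{\svolsp{H}{a+r_0}}.$$

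Next I would split $A_{a,b}(S) = A_{a,d}(S)\cup A_{d,b}(S)$ and control $\mathfrak m(A_{a,d}(S))$ above by $\mathfrak m(\partial B_a(S))\svolann{H}{a+r_0,d+r_0}/\svolsp{H}{a+r_0}$ using \eqref{ann-sph}. Subtracting from the previous display and using the additivity $\svolann{H}{a+r_0,b+r_0} = \svolann{H}{a+r_0,d+r_0} + \svolann{H}{d+r_0,b+r_0}$ then yields
$$\mathfrak m(A_{d,b}(S)) \geq \mathfrak m(\partial B_a(S))\left[\frac{\svolann{H}{d+r_0,b+r_0}}{\svolsp{H}{a+r_0}} - \epsilon\,\frac{\svolann{H}{a+r_0,b+r_0}}{\svolsp{H}{a+r_0}}\right].$$
Pairing this against the upper bound $\mathfrak m(A_{d,b}(S)) \leq \mathfrak m(\partial B_d(S))\svolann{H}{d+r_0,b+r_0}/\svolsp{H}{d+r_0}$ from \eqref{ann-sph} (now based at $d$) and dividing produces precisely \eqref{csph-com1}.

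Case (i) runs the same telescope against the flat reference profile: \eqref{sph-com0} applied on $(0,a)$ gives $\mathfrak m_S(S) \geq \mathfrak m(\partial B_a(S))$, so \eqref{lvol-com-imp0} becomes $\mathfrak m(A_{a,b}(S)) \geq (1-\epsilon)(b-a)\mathfrak m(\partial B_a(S))$, and the analogous splitting/subtraction using \eqref{ann-sph0} produces \eqref{csph-com0}. I do not foresee any substantive obstacle: once Lemma~\ref{rel-vol} is in hand, the rest is one-dimensional measure bookkeeping. The only care needed is to thread the ``$+r_0$'' shift consistently through the reference volumes, which is why the addition-formula identification is performed at the very beginning.
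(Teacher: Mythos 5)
Your argument is correct and follows essentially the same route as the paper's proof: both rest on the identification of the model integral via the addition formula for $\op{sn}_H$ (so that \eqref{lvol-com} reads as an annulus-to-sphere volume ratio in the model), the relative volume comparison of Lemma~\ref{rel-vol}, and the splitting $A_{a,b}(S)=A_{a,d}(S)\cup A_{d,b}(S)$. The only difference is cosmetic: you pass from $\mathfrak m_S(S)$ to $\mathfrak m(\partial B_a(S))$ at the outset (using \eqref{sph-com} on $(0,a)$) and then run the telescope, whereas the paper first derives the inequality relative to $\mathfrak m_S(S)$ and converts at the end, observing that the ``same argument'' applies once \eqref{lvol-com} is rewritten against $\mathfrak m(\partial B_a(S))$.
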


\begin{proof}
By $$\frac{\mathfrak m (A_{a,b}(S))}{\mathfrak m (S)}\geq (1-\epsilon)\cdot \frac{\svolann{H}{a+r_0,b+r_0}}{\svolsp{H}{r_0}},$$
we have that,
$$\mathfrak m(A_{a,b}(S))\frac{\svolsp{H}{r_0}}{\mathfrak m(S)} - \svolann{H}{a+r_0,b+r_0} \geq - \epsilon \svolann{H}{a+r_0,b+r_0}.$$
Thus for all $a< d< b$,
\begin{eqnarray*}
1 - \epsilon\frac{\svolann{H}{a+r_0,b+r_0}}{\svolann{H}{d+r_0,b+r_0}} &\leq &1 + \frac{\mathfrak m(A_{a,b}(S))}{\svolann{H}{d+r_0,b+r_0}}\frac{\svolsp{H}{r_0}}{\mathfrak m(S)} - \frac{\svolann{H}{a+r_0,b+r_0}}{\svolann{H}{d+r_0,b+r_0}}\\
&= & \frac{\mathfrak m(A_{a,b}(S))}{\svolann{H}{d+r_0,b+r_0}}\frac{\svolball{H}{r_0}}{\mathfrak m(S)} - \frac{\svolann{H}{a+r_0,d+r_0}}{\svolann{H}{d+r_0,b+r_0}}.
\end{eqnarray*}
By relative volume comparison Lemma~\ref{rel-vol},
$$\svolann{H}{a+r_0,d+r_0} \geq \mathfrak m(A_{a,d}(S))\cdot \frac{\svolball{H}{r_0}}{\mathfrak m(S)}.$$
So by \eqref{ann-sph},
\begin{eqnarray*}
1 - \epsilon\frac{\svolann{H}{a+r_0,b+r_0}}{\svolann{H}{d+r_0,b+r_0}} &\leq & \frac{\mathfrak m(A_{a,b}(S))}{\svolann{H}{d+r_0,b+r_0}}\frac{\svolball{H}{r_0}}{\mathfrak m(S)} - \frac{\mathfrak m(A_{a,d}(S))}{\svolann{H}{d+r_0,b+r_0}} \frac{\svolball{H}{r_0}}{\mathfrak m(S)}\\
&= & \frac{\mathfrak m(A_{d,b}(S))}{\svolann{H}{d+r_0,b+r_0}}\frac{\svolsp{H}{r_0}}{\mathfrak m(S)}\\
&\leq & \frac{\mathfrak m(\partial B_d(S))}{\svolsp{H}{d+r_0}}\frac{\svolsp{H}{r_0}}{\mathfrak m(S)}.
\end{eqnarray*}
Note that by \eqref{sph-com0} and \eqref{sph-com}, \eqref{lvol-com} implies that for $0\leq a<b$,
\begin{equation*}
\mathfrak m(A_{a, b}(S))\geq (1-\epsilon)(b-a)\mathfrak m(\partial B_a(S)), \text{ for } m=0 \text{ and } K=0;
\end{equation*}
\begin{equation*}
\mathfrak m(A_{a, b}(S))\geq (1-\epsilon)\frac{\svolann{H}{a+r_0, b+r_0}}{\svolsp{H}{a+r_0}}\mathfrak m(\partial B_a(S)), \text{ for } m\neq 0 \text{ or } K\neq 0.
\end{equation*}
Then same argument as above gives the results.
\end{proof}

Now we will use above volume estimates Lemma~\ref{rel-vol}, Lemma~\ref{qua-com} and Laplacian comparison Lemma~\ref{glap-com} to prove the Laplacian estimates Theorem~\ref{lap-main}.
\begin{proof}[Proof of Theorem~\ref{lap-main}]

By Lemma~\ref{glap-com}, we have that 
for $m=0, K=0$, $$\Delta d_{s, i}\leq 0;$$
for $m\neq 0$ or $K\neq 0$, 
$$\Delta d_{s, i} \leq (N-1)\frac{\op{sn}'_H(r+r_0)}{\op{sn}_H(r+r_0)}.$$
And for $a<d<b$ (see also \cite[Proposition 3.6]{CDNPSW})
\begin{eqnarray*}
-\kern-1em\int_{A_{a, d}(S_i)}\Delta d_{s, i} &=&\frac1{\mathfrak m_i(A_{a, d}(S_i))}\int_{\mathfrak Q(A_{a, d}(S_i)\cap \mathcal T^*_{d_{s_i}})}\int_{A_{a, d}(S_i)\cap X_{\alpha}}\Delta d_{s, i}h_{\alpha}(r)dr d\mathfrak q(\alpha)\\
&=&\frac1{\mathfrak m_i(A_{a, d}(S_i))} \int_{\mathfrak Q(A_{a, d}(S_i)\cap \mathcal T^*_{d_{s_i}})}\int_a^d\frac{h'_{\alpha}(r)}{h_{\alpha}(r)}h_{\alpha}(r)dr d\mathfrak q(\alpha)\\
&=&\frac1{\mathfrak m_i(A_{a, d}(S_i))} \int_{\mathfrak Q(A_{a, d}(S_i)\cap \mathcal T^*_{d_{s_i}})}h_{\alpha}(d)-h_{\alpha}(a) d\mathfrak q(\alpha)\\
& =& \frac{\mathfrak m_i(\partial B_d(S_i))-\mathfrak m_i(\partial B_a(S_i))}{\mathfrak m_i(A_{a, d}(S_i))}.
\end{eqnarray*}

Thus for $m=0$ and $K=0$, by \eqref{csph-com0} and \eqref{sph-com0}
$$-\kern-1em\int_{A_{a, d}(S_i)}\Delta d_{s, i} \geq 
\frac{-\epsilon_i(b-a)}{b-d-\epsilon_i(b-a)}\frac{\mathfrak m_i(\partial B_d(S_i))}{\mathfrak m_i(A_{a, d}(S_i))}\geq \frac{-\epsilon_i(b-a)}{b-d-\epsilon_i(b-a)}\frac1{d-a};$$
And for  $m\neq 0$ or $K\neq 0$, if $\svolsp{H}{d+r_0}>\svolsp{H}{a+r_0}$ which is always holds for $K\leq 0$, by \eqref{ann-sph} and \eqref{csph-com1}
\begin{eqnarray*}-\kern-1em\int_{A_{a, d}(S_i)}\Delta d_{s, i} &\geq & \frac{1}{\svolsp{H}{a+r_0}}\left((1-\epsilon_i C(N, H, a, b, d, r_0))\svolsp{H}{d+r_0}-\svolsp{H}{a+r_0}\right)\frac{\mathfrak m(\partial B_a(S))}{\mathfrak m(A_{a, d}(S))}\\
&\geq & \frac{(1-\epsilon_i C(N, H, a, b, d, r_0))\svolsp{H}{d+r_0}-\svolsp{H}{a+r_0}}{\svolann{H}{a+r_0, d+r_0}};
\end{eqnarray*}
If $\svolsp{H}{b+r_0}\leq \svolsp{H}{a+r_0}$ for $K>0$, as the $m=0$, $K=0$ case, 
\begin{eqnarray*}-\kern-1em\int_{A_{a, d}(S_i)}\Delta d_{s, i} &\geq & \frac{1}{\svolsp{H}{a+r_0}}\left(\svolsp{H}{d+r_0}-(1+\epsilon_i C(N, H, a, b, d, r_0))\svolsp{H}{a+r_0}\right)\frac{\mathfrak m(\partial B_d(S))}{\mathfrak m(A_{a, d}(S))}\\
&\geq & \frac{\svolsp{H}{d+r_0}-(1+\epsilon_i C(N, H, a, b, d, r_0))\svolsp{H}{a+r_0}}{\svolann{H}{a+r_0, d+r_0}};
\end{eqnarray*}
And by Lemma~\ref{ele-com0},
\begin{eqnarray*}
& & -\kern-1em\int_{A_{a, d}(S_i)} (N-1)\frac{\op{sn}'_H(r+r_0)}{\op{sn}_H(r+r_0)}\\
& =& \frac{1}{\mathfrak m(A_{a, d}(S_i))}\int_{A_{a, d}(S_i)\cap \mathcal T_{d_{s, i}}\setminus B_{\op{out}}}(N-1)\frac{\op{sn}'_{H}(r+r_0)}{\op{sn}_H(r+r_0)}d\mathfrak m\\
& \leq & \frac{1}{\mathfrak m(A_{a, b}(S_i))}\int_{\mathfrak Q(A_{a, d}(S_i)\cap \mathcal T^*_{d_{s, i}})}\int_a^d(N-1)\frac{\op{sn}'_{H}(r+r_0)}{\op{sn}_H(r+r_0)} \frac{h_{\alpha}(r)}{h_{\alpha}(a)}h_{\alpha}(a)dr d\mathfrak q(\alpha)\\
& \leq & \frac{1}{\mathfrak m(A_{a, b}(S_i))}\int_{\mathfrak Q(A_{a, d}(S_i)\cap \mathcal T^*_{d_{s, i}})}\int_a^d(N-1)\frac{\op{sn}'_{H}(r+r_0)}{\op{sn}_H(r+r_0)} \frac{\op{sn}^{N-1}_H(r+r_0)}{\op{sn}^{N-1}_H(a+r_0)}h_{\alpha}(a)dr d\mathfrak q(\alpha)\\
& = & \frac{1}{\mathfrak m(A_{a, b}(S_i))\op{sn}_H^{N-1}(a+r_0)}\int_{\mathfrak Q(A_{a, d}(S_i)\cap \mathcal T^*_{d_{s, i}})}\int_a^d d\op{sn}_H^{N-1}(r+r_0) h_{\alpha}(a) d\mathfrak q(\alpha)\\
&=&  \frac{\mathfrak m(\partial B_a(S_i))\left(\op{sn}_H^{N-1}(d+r_0)-\op{sn}_H^{N-1}(a+r_0)\right)}{\mathfrak m(A_{a, b}(S_i))\op{sn}_H^{N-1}(a+r_0)}\\
&\leq & \begin{cases}(1+\epsilon_i)\frac{\svolsp{H}{a+r_0}}{\svolann{H}{a+r_0, d+r_0}} \frac{\op{sn}_H^{N-1}(d+r_0)-\op{sn}_H^{N-1}(a+r_0)}{\op{sn}_H^{N-1}(a+r_0)}, & \op{sn}_H^{N-1}(d+r_0)>\op{sn}_H^{N-1}(a+r_0)\\
\frac{\svolsp{H}{a+r_0}}{\svolann{H}{a+r_0, d+r_0}} \frac{\op{sn}_H^{N-1}(d+r_0)-\op{sn}_H^{N-1}(a+r_0)}{\op{sn}_H^{N-1}(a+r_0)}, & \op{sn}_H^{N-1}(d+r_0)\leq \op{sn}_H^{N-1}(a+r_0)\end{cases}\\
&\leq & (1+\epsilon_i) \frac{\svolsp{H}{d+r_0}-\svolsp{H}{a+r_0}}{\svolann{H}{a+r_0, d+r_0}}
\end{eqnarray*}

We have that
\begin{equation}
-\kern-1em\int_{A_{a, d}(S_i)}\Delta d_{s, i}\geq (1-\Psi(\epsilon_i | N, K, a, d, b, m))-\kern-1em\int_{A_{a, d}(S_i)} (N-1)\frac{\op{sn}'_H(r+r_0)}{\op{sn}_H(r+r_0)}.
\end{equation}

Now as the proof of Claim 1 in \cite[Proposition 4.3]{Ch}, passing to the limit, we have that on $A_{a, d}(S)$

(i) for $m=0$ and $K=0$, 
$$\Delta d_s=0;$$

(ii) for $m\neq 0$ or $K\neq 0$, 
$$\Delta d_s=(N-1)\frac{\op{sn}'_H(d_s+r_0)}{\op{sn}_H(d_s+r_0)}.$$
\end{proof}

\begin{Cor} Let the assumption be as in Theorem~\ref{lap-main}. For $m\neq 0$ or $K\neq 0$, let
$$f_H(x)=f_H(d_s(x)), \quad f_H(r)=\int \op{sn}_H(r+r_0)dr.$$
Then in $A_{a, b}(S)$
$$\nabla f_H=\op{sn}_{H}(d_s+r_0)\nabla d_s,$$
and
$$\Delta f_H= N\op{sn}'_{H}(d_s+r_0).$$
\end{Cor}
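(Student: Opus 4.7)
The plan is to apply the chain rule for gradients and for the (measure-valued) Laplacian to $f_H\circ d_s$ and then plug in the Laplacian formula for $d_s$ already proved in Theorem~\ref{lap-main}. The gradient identity is immediate: since $f_H \in C^{\infty}(\mathbb{R})$ with bounded derivative on the relevant interval and $d_s$ is $1$-Lipschitz with $\nabla d_s \in L^{2}(TX)$, the chain rule for the differential in the cotangent module $L^{2}(T^{*}X)$ (Section 2.2) gives $d(f_H \circ d_s) = f_H'(d_s)\, d d_s = \op{sn}_H(d_s+r_0)\, d d_s$, whence $\nabla f_H = \op{sn}_H(d_s+r_0)\nabla d_s$ on $A_{a,b}(S)$.

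For the Laplacian statement, the first step is to observe that $|\nabla d_s|_w = 1$ $\mathfrak m$-a.e. on $A_{a,b}(S)$. This follows from the disintegration structure of Theorem~\ref{dis-com}: outside a $\mathfrak m$-negligible set, every point of $A_{a,b}(S)$ lies on a ray $X_\alpha$ along which $d_s$ is an affine parametrization with slope $1$, so the metric slope (and hence the minimal weak upper gradient) equals $1$ there. With this in hand, I would invoke the chain rule for the distributional Laplacian on the open set $A_{a,b}(S)$: for $\phi\in C^{2}$ and $u\in D(\Delta, A_{a,b}(S))$ with $|\nabla u|_w\in L^{\infty}$,
\begin{equation*}
\Delta(\phi\circ u) = \phi'(u)\,\Delta u + \phi''(u)\,|\nabla u|_w^{2}\,\mathfrak m.
\end{equation*}
This identity follows by testing against Lipschitz $g$ with compact support in $A_{a,b}(S)$ and integrating by parts against $\nabla(\phi\circ u) = \phi'(u)\nabla u$, using the Leibniz rule for $\Gamma$ and the definition of the measure-valued Laplacian.

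Applying this with $\phi = f_H$ and $u = d_s$, and using $f_H'(r) = \op{sn}_H(r+r_0)$, $f_H''(r) = \op{sn}'_H(r+r_0)$ together with Theorem~\ref{lap-main}(ii), I get
\begin{equation*}
\Delta f_H \;=\; \op{sn}_H(d_s+r_0)\cdot (N-1)\frac{\op{sn}'_H(d_s+r_0)}{\op{sn}_H(d_s+r_0)} + \op{sn}'_H(d_s+r_0)\cdot 1 \;=\; N\,\op{sn}'_H(d_s+r_0),
\end{equation*}
which is the claimed identity.

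The only delicate step is the identification $|\nabla d_s|_w = 1$ on $A_{a,b}(S)$; the chain rules are standard in the $\RCD$ calculus. If a direct reference is preferred, one can alternatively avoid the chain rule for the measure-valued Laplacian by testing $-\int \langle \nabla f_H,\nabla g\rangle\,d\mathfrak m$ for $g$ Lipschitz with compact support in $A_{a,b}(S)$: writing $\nabla f_H = \op{sn}_H(d_s+r_0)\nabla d_s$ and integrating by parts moves the $\op{sn}_H$-factor onto $g$, producing both the $\Delta d_s$ term and the $|\nabla d_s|_w^{2}$ term from $\nabla(\op{sn}_H(d_s+r_0) g)$; the rest is algebra.
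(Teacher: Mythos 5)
Your proof is correct and takes the natural route: the gradient identity follows from the chain rule for differentials, the identification $|\nabla d_s|_w=1$ $\mathfrak m$-a.e. on $A_{a,b}(S)$ follows from the transport-ray disintegration of Theorem~\ref{dis-com} (this is the same reasoning underlying the Cavalletti--Mondino Laplacian formula used throughout Section 3), and the chain rule for the measure-valued Laplacian combined with Theorem~\ref{lap-main}(ii) and $f_H''=\op{sn}'_H(\cdot+r_0)$ gives $\Delta f_H = (N-1)\op{sn}'_H(d_s+r_0)+\op{sn}'_H(d_s+r_0)=N\op{sn}'_H(d_s+r_0)$. The paper does not record a proof for this corollary, but the chain-rule computation you give is the obvious and expected argument; the one hypothesis you correctly flag as needing care, namely that $\Delta d_s$ has no singular part on $A_{a,b}(S)$ so the chain rule applies as a plain identity of $L^1$ densities, is exactly what Theorem~\ref{lap-main} supplies.
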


\section{Hessian estimates}

In this section we will use the Laplacian estimates Theorem~\ref{lap-main} to give an estimate of $\op{Hess}(d_s)$ for $m=0$ and $K=0$ and estimates of $\op{Hess}(f_H)$ for $m\neq 0$ or $K\neq 0$. 

\begin{Thm}[Hessian estimates] \label{hess-est}
Let the assumption be as in Theorem~\ref{lap-main} and let $a<d<b$.

(i) For $m=0$, $K=0$, in $A_{a+\frac{d-a}4, b-\frac{d-a}4}(S)$ $\mathfrak m$-a.e.
\begin{equation}
\op{Hess}(d_s)=0.
\label{hess-0}
\end{equation}

(ii) For $m\neq 0$ or $K\neq 0$, in $A_{a+\frac{d-a}4, b-\frac{d-a}4}(S)$ $\mathfrak m$-a.e.
\begin{equation}
\op{Hess}(f_H)=\op{sn}'_H(d_s+r_0).\label{hess-1}
\end{equation}

\end{Thm}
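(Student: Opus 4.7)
The plan is to derive both identities from the improved Bochner inequality of Theorem~\ref{Boc-ine}, using Theorem~\ref{lap-main} to make the Laplacian side explicit, and to close the argument (in case (ii)) by the dimensional Cauchy-Schwarz inequality. Throughout I would localize via a non-negative cutoff $\phi$ from Lemma~\ref{cut-off} that equals $1$ on $A_{a+(d-a)/4,b-(d-a)/4}(S)$, is supported in a slightly larger annulus inside $A_{a,b}(S)$, and has bounded gradient and Laplacian.

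Consider first case (i), $m=K=0$. Theorem~\ref{lap-main} gives $\Delta d_s=0$ on $A_{a,b}(S)$, while the ray structure in $\mathcal T^*_{d_s}$ forces $|\nabla d_s|=1$ $\mathfrak m$-a.e.\ there. Hence both $\tfrac12\Delta|\nabla d_s|^2$ and $\langle\nabla d_s,\nabla\Delta d_s\rangle$ vanish on this region, so $\Gamma_2(d_s)=0$. The integrated Bochner inequality with $K=0$ then reads
$$0\;=\;\int\phi\,\Gamma_2(d_s)\,d\mathfrak m\;\geq\;\int\phi\,|\op{Hess}(d_s)|_{HS}^2\,d\mathfrak m,$$
forcing $\op{Hess}(d_s)=0$ $\mathfrak m$-a.e.\ on $\{\phi=1\}$.

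In case (ii) I would work with $f_H$ in place of $d_s$. By the corollary following Theorem~\ref{lap-main}, $\nabla f_H=\op{sn}_H(d_s+r_0)\nabla d_s$, $|\nabla f_H|^2=\op{sn}_H^2(d_s+r_0)$, and $\Delta f_H=N\op{sn}'_H(d_s+r_0)$. Using the chain rule $\Delta(\psi(d_s))=\psi'(d_s)\Delta d_s+\psi''(d_s)|\nabla d_s|^2$ together with $\op{sn}''_H=-H\op{sn}_H$ and the explicit $\Delta d_s$, a direct computation produces
$$\tfrac12\Delta|\nabla f_H|^2=N(\op{sn}'_H)^2(d_s+r_0)-H\op{sn}_H^2(d_s+r_0),\quad\langle\nabla f_H,\nabla\Delta f_H\rangle=-NH\op{sn}_H^2(d_s+r_0),$$
so that $\Gamma_2(f_H)=K|\nabla f_H|^2+N(\op{sn}'_H)^2(d_s+r_0)$. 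Substituting into the integrated Bochner inequality and cancelling the $K|\nabla f_H|^2$ terms gives
$$\int\phi\,|\op{Hess}(f_H)|_{HS}^2\,d\mathfrak m\;\leq\;\int\phi\,N(\op{sn}'_H)^2(d_s+r_0)\,d\mathfrak m.$$
On the other hand, the dimensional Cauchy-Schwarz inequality $|\op{Hess}(f_H)|_{HS}^2\geq(\Delta f_H)^2/N=N(\op{sn}'_H)^2(d_s+r_0)$, valid $\mathfrak m$-a.e.\ in any $\RCD(K,N)$-space, supplies the reverse pointwise inequality. Equality therefore holds $\mathfrak m$-a.e.\ on $\{\phi=1\}$, and the equality case of Cauchy-Schwarz identifies $\op{Hess}(f_H)$ with $\op{sn}'_H(d_s+r_0)\,g$, which is \eqref{hess-1}.

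The main technical obstacle is that $d_s$ and $f_H$ are only Lipschitz with measure-valued Laplacian, not elements of $\op{Test}(X)$, so Theorem~\ref{Boc-ine} does not apply verbatim. I would handle this by the standard localization-and-mollification scheme: since $|\nabla d_s|\in L^\infty$ and $\Delta d_s\in L^\infty_{\op{loc}}(A_{a,b}(S))$ by Theorem~\ref{lap-main}, the localized functions $\phi d_s$ and $\phi f_H$ lie in $H^{2,2}(X)$, and one applies Bochner in its integrated $H^{2,2}$ form of \cite{Sav14, Han18}. The extra terms introduced by the derivatives of $\phi$ are supported off $\{\phi=1\}$, are controlled by the bounds from Lemma~\ref{cut-off}, and play no role in the pointwise identification of $\op{Hess}$ on the region of interest.
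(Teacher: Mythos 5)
Your proof of case (i) is exactly the paper's. In case (ii) you take a genuinely different closing step. You compute $\Gamma_2(f_H)=K|\nabla f_H|^2+N(\op{sn}'_H)^2$ correctly, use the improved Bochner inequality to get $\int\phi\,|\op{Hess}(f_H)|_{HS}^2\leq\int\phi\,N(\op{sn}'_H)^2$, and then close by invoking the pointwise dimensional Cauchy--Schwarz $|\op{Hess}(f)|_{HS}^2\geq(\Delta f)^2/N$ together with its equality case. The paper instead observes that for each sign of $K$ there is a combination of $f_H$ and $|\nabla f_H|^2$ that is \emph{identically constant} --- $\tfrac12|\nabla f_0|^2-f_0\equiv 0$ when $K=0$, $|\nabla f_1|^2+f_1^2-1\equiv 0$ when $K=N-1$, $|\nabla f_{-1}|^2-f_{-1}^2+1\equiv 0$ when $K=-(N-1)$ --- so that $\Gamma(\phi,\,\cdot\,)=0$ exactly and the Bochner chain can be closed algebraically into a completed square $0\geq\int\phi\,|\op{Hess}(f_H)-\op{sn}'_H\,g|_{HS}^2$ in one pass, with no cutoff error terms ever appearing.

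Two consequences of the difference are worth flagging. First, your route introduces nonzero boundary terms from $\phi$ in the integration by parts of $\int\phi\,\Delta|\nabla f_H|^2$; they do integrate back to $\int\phi\,N(\op{sn}'_H)^2$ as you want, but you should exhibit that cancellation explicitly rather than wave at it, since $|\nabla f_H|^2=\op{sn}_H^2(d_s+r_0)$ is not constant (unlike the paper's chosen combination). Second, the pointwise bound $|\op{Hess}(f)|_{HS}^2\geq(\Delta f)^2/N$ that you call ``valid $\mathfrak m$-a.e.\ in any $\RCD(K,N)$-space'' is \emph{not} elementary linear algebra: in these spaces the essential dimension $n$ may be $<N$ and a priori $\op{tr}\op{Hess}(f)\leq\Delta f$, so the inequality rests on the identification $\op{tr}\op{Hess}(f)=\Delta f$ for $f$ with absolutely continuous Laplacian together with $n\leq N$, or equivalently on the sharper form of the improved Bochner inequality with the trace-deficit term from \cite{Han18}. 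The paper's completion-of-the-square step secretly needs the same refinement (to equate $|\op{Hess}(f_0)|_{HS}^2-N$ with $|\op{Hess}(f_0)-g|_{HS}^2$), so neither route is strictly more self-contained, but you should cite the precise statement rather than present it as an off-the-shelf Cauchy--Schwarz. With those two points made precise, your argument is correct and reaches the same conclusion.
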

\begin{proof}

Take a cut-off function $\phi: X\to [0, 1]$ as in Lemma~\ref{cut-off} such that 

(1) $\phi=1$ on $A_{a+\frac{d-a}4, b-\frac{d-a}4}(S)$, $\phi=0$ on $X\setminus  A_{a+\frac{d-a}5, b-\frac{d-a}5}(S)$;

(2) $|\Delta \phi|+|\nabla \phi|\leq C(K, N, a, b)$ a.e. on $A_{a+\frac{d-a}5, b-\frac{d-a}5}(S)$.

For $m=0, K=0$, by the improved Bochner inequality Theorem~\ref{Boc-ine} and Theorem~\ref{lap-main},
\begin{eqnarray*}
0&=&-\frac12\int_X \Gamma(\phi, |\nabla d_s|^2)=\int_X\phi \frac12 \Delta|\nabla d_s|^2\\
&\geq & \int_X\phi\left( |\op{Hess}(d_s)|^2_{\op{HS}}+ \Gamma(d_s, \Delta d_s)\right)=\int_X\phi |\op{Hess}(d_s)|^2_{\op{HS}}.
\end{eqnarray*}
And thus in $A_{a+\frac{d-a}4, b-\frac{d-a}4}(S)$ $\mathfrak m$-a.e.
\begin{equation*}\op{Hess}(d_s)=0. \end{equation*}


For $m\neq 0, K=0$, $f_0=\frac12 (r+r_0)^2$,
\begin{eqnarray*}
0&=&-\int_X \Gamma(\phi, \frac12|\nabla f_0|^2-f_0)=\int_X\phi \left(\frac12 \Delta|\nabla f_0|^2-\Delta f_0\right)\\
&\geq & \int_X\phi\left( |\op{Hess}(f_0)|^2_{\op{HS}}+ \Gamma(f_0, \Delta f_0)-N\right)\geq \int_X\phi |\op{Hess}(f_0)-1|^2_{\op{HS}}.
\end{eqnarray*}

For $K=N-1$, $f_1=-\cos (r+r_0)$,
\begin{eqnarray*}
0&=&-\frac12\int_X \Gamma(\phi, |\nabla f_1|^2+f_1^2-1)=\frac12\int_X\phi \left(\Delta|\nabla f_1|^2+2|\nabla f_1|^2+2f_1\Delta f_1\right)\\
&\geq & \int_X\phi\left( |\op{Hess}(f_1)|^2_{\op{HS}}+(N-1)|\nabla f_1|^2+ \Gamma(f_1, \Delta f_1)+|\nabla f_1|^2+f_1\Delta f_1\right)\\
&=& \int_X\phi\left( |\op{Hess}(f_1)|^2_{\op{HS}}+ (N-1)\sin^2(r+r_0)-N\sin^2(r+r_0) +\sin^2(r+r_0)-N\cos^2(r+r_0)\right)\\
&\geq& \int_X\phi |\op{Hess}(f_1)-\cos(r+r_0)|^2_{\op{HS}}.
\end{eqnarray*}

For $K=-(N-1)$, $f_{-1}=\cosh (r+r_0)$,
\begin{eqnarray*}
0&=&-\frac12\int_X \Gamma(\phi, |\nabla f_{-1}|^2-f_{-1}^2+1)=\frac12\int_X\phi \left(\Delta|\nabla f_{-1}|^2-2|\nabla f_{-1}|^2-2f_{-1}\Delta f_{-1}\right)\\
&\geq & \int_X\phi\left( |\op{Hess}(f_{-1})|^2_{\op{HS}}-(N-1)|\nabla f_{-1}|^2+ \Gamma(f_{-1}, \Delta f_{-1})-|\nabla f_{-1}|^2-f_{-1}\Delta f_{-1}\right)\\
&=& \int_X\phi\left( |\op{Hess}(f_{-1})|^2_{\op{HS}}- (N-1)\sinh^2(r+r_0)+N\sinh^2(r+r_0) -\sinh^2(r+r_0)-N\cosh^2(r+r_0)\right)\\
&\geq& \int_X\phi |\op{Hess}(f_{-1})-\cosh(r+r_0)|^2_{\op{HS}}.
\end{eqnarray*}
\end{proof}

\section{Pythagoras theorem and Cosine law}

Let $(X, d, \mathfrak m)$ be as in Theorem~\ref{lap-main}. In this section, we will use a method as in \cite{CC1} to show that the metric in $A_{a+\delta, b-\delta}(S)$ (some $\delta>0$) satisfies  Pythagoras theorem for $m=0$ and $K=0$ or Cosine law for $m\neq 0$ or $K\neq 0$. 



First note that for $t\in [-(b-a)/3, (b-a)/3]$, $td_s$ is $c$-concave, thus we can apply the differential formula in Section 2.5 to $d_s$.
\begin{Lem} \label{c-concave}
Let the assumption be as in Theorem~\ref{lap-main}. Then $td_s$ is $c$-concave for $t\in [-\frac{b-a}3, \frac{b-a}3]$, and 
$$(td_s)^c(y)=-td_s-\frac{t^2}2.$$
\end{Lem}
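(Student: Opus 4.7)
The goal is to compute $(td_s)^c$ explicitly and thereby verify $c$-concavity; the target formula $(td_s)^c(y) = -td_s(y) - t^2/2$ is suggested by the Euclidean model in which $d_s$ is a coordinate.

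\emph{Lower bound from $1$-Lipschitz.} For $t \geq 0$ (the case $t \leq 0$ being symmetric), the $1$-Lipschitz property of $d_s$ together with the AM-GM inequality gives, for any $v \in X$,
\[
\frac{d^2(y,v)}{2} - td_s(v) \geq \frac{d^2(y,v)}{2} - td(y,v) - td_s(y) = \frac{(d(y,v)-t)^2}{2} - \frac{t^2}{2} - td_s(y) \geq -\frac{t^2}{2} - td_s(y),
\]
so taking infimum over $v$ yields $(td_s)^c(y) \geq -td_s(y) - t^2/2$.

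\emph{Upper bound via transport rays.} For the matching upper bound I will exploit the disintegration of $d_s$ from Theorem~\ref{dis-com} together with the finite outer curvature hypothesis $\mathfrak m(B_{\op{out}}) = 0$. These imply that $\mathfrak m$-a.e.\ $y \in X$ sits in the interior of a transport ray $X_{\alpha(y)}$, along which $d_s$ grows at unit speed. The range $|t| \leq (b-a)/3$ is chosen so that on a subset $D$ of full $\mathfrak m$-measure the relevant rays extend by at least $|t|$ in the direction of $\op{sign}(t)$; for $y \in D$, setting $v = \gamma_{\alpha(y)}(d_s(y)+t)$ realizes $d(y,v) = |t|$ and $d_s(v) - d_s(y) = t$, whence $\frac{d^2(y,v)}{2} - td_s(v) = -td_s(y) - t^2/2$. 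One then upgrades this from $D$ to all of $X$ via density of $D$ in $\op{supp}(\mathfrak m) = X$ together with local Lipschitz continuity of $(td_s)^c$ (the near-minimizer in its defining infimum lives in a ball of controlled radius around $y$, giving an a priori Lipschitz bound).

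\emph{$c$-concavity.} Having established $(td_s)^c = -td_s - t^2/2$, the same argument with $t$ replaced by $-t$ yields $(-td_s)^c = td_s - t^2/2$; consequently
\[
(td_s)^{cc} = \bigl(-td_s - \tfrac{t^2}{2}\bigr)^c = (-td_s)^c + \tfrac{t^2}{2} = td_s,
\]
showing $td_s$ is $c$-concave. The main obstacle will be the rigorous a.e.-to-everywhere upgrade in the upper bound: making precise that the range $|t| \leq (b-a)/3$ provides enough slack in the transport-ray structure to reach $\mathfrak m$-a.e.\ point of $X$ by the flow, and that the resulting equality extends pointwise by continuity of both sides.
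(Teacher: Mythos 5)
Your lower bound is exactly the paper's argument, and your computation of $(td_s)^{cc}$ from the explicit formula is fine. The problem is in the upper bound, where there is a genuine gap.

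You assert that the disintegration of $d_s$ on $X$, together with finite outer curvature, guarantees that for $\mathfrak m$-a.e.\ $y\in A_{a',b'}(S)$ the transport ray $X_{\alpha(y)}$ extends at least $(b-a)/3$ in the direction of $\op{sign}(t)$, so that $v=\gamma_{\alpha(y)}(d_s(y)+t)$ exists and realizes the infimum. This does not follow from finite outer curvature. Finite outer curvature of $S$ controls whether rays pass through $S$ (the $B_{\op{out}}$ set on the near side); it says nothing about how far rays extend on the far side. Transport rays can terminate well before $d_s = b$ (e.g.\ at focal/cut points, or because $\partial B_b(S)$ is small or empty), and in such a situation your point $v$ simply does not exist for $t$ near $(b-a)/3$. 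Worse, the hypothesis of Theorem~\ref{lap-main} places finite outer curvature on the approximating $S_i$, not on the limit $S$, so even the ingredient you want to invoke is not available directly on $X$.

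The ingredient you are missing is precisely the almost volume cone hypothesis \eqref{lqvol-com}, which you never use. The paper's upper bound works on the sequence $(X_i, S_i)$: it first shows, via the volume element comparison (Lemma~\ref{vol-ele-com}) and \eqref{lqvol-com}, that the set $\mathcal T^i_{a,b}$ of points whose transport ray reaches all the way into $A_{b-\tau, b}(S_i)$ carries a $(1-\epsilon_i)$-fraction of the measure of $A_{a,b}(S_i)$. That is where ``rays extend far enough'' is actually proved, and it is a consequence of near-maximal volume, not of finite outer curvature. The paper then uses this to produce, for each $y\in A_{a,b}(S)$, approximating points $y_i^\pm\in\partial B_{a}(S_i),\partial B_b(S_i)$ with $d_i(y_i^-,y_i^+)$ nearly $b-a$, takes limits of the concatenated geodesics $\gamma_i^-\cup\gamma_i^+$, and reads off the desired $v=\gamma(d_s(y)+t)$ on the limit geodesic. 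In short: your identification of the ``main obstacle'' as an a.e.-to-everywhere upgrade misdiagnoses the issue; the real obstacle is establishing the existence of long rays at all, and for that you must use \eqref{lqvol-com} as the paper does.
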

\begin{proof}
The proof is the same as in \cite{CC1}. First take small $\tau>0$ and consider
 $$\mathcal T^i_{a, b}=\{\mathcal T_{d_{s, i}}\cap A_{a, b-\tau}(S_i), \,  \mathcal T_{d_{s, i}}\cap A_{b-\tau, b}(S_i)\neq \emptyset \}.$$
Then for any $x\in \mathcal T^i_{a, b}$, there is $y\in A_{b-\tau, b}(S_i)$, such that
$$d_{s, i}(y)-d_{s, i}(x)=d_i(x, y).$$
Since $\mathfrak m_i(X_i\setminus \mathcal T_{d_{s, i}})=0$, 
$$\mathfrak m_i(\mathcal T_{d_{s, i}}\cap A_{b-\tau, b}(S_i))=\mathfrak m_i(A_{b-\tau, b}(S_i)).$$
And by volume element comparison Lemma~\ref{vol-ele-com} and \eqref{lqvol-com}, 
\begin{eqnarray*}
\frac{\mathfrak m(\mathcal T^i_{a, b})}{\mathfrak m_i(A_{a, b}(S_i))}&=& \frac{\mathfrak m_i(\mathcal T^i_{a, b})}{\svolann{H}{a+r_0, b-\tau+r_0}}\frac{\svolann{H}{a+r_0, b+r_0}}{\mathfrak m_i(A_{a, b}(S_i))}\frac{\svolann{H}{a+r_0, b-\tau+r_0}}{\svolann{H}{a+r_0, b+r_0}}\\
&\geq & \frac{A_{b-\tau, b}(S_i)}{\svolann{H}{b-\tau, b}}\frac{\svolann{H}{a+r_0, b+r_0}}{\mathfrak m_i(A_{a, b}(S_i))}\frac{\svolann{H}{a+r_0, b-\tau+r_0}}{\svolann{H}{a+r_0, b+r_0}}\\
&\geq & (1-\epsilon_i)\frac{\mathfrak m_i(\partial B_a(S_i))}{\svolsp{H}{a+r_0}}\frac{\svolann{H}{a+r_0, b+r_0}}{\mathfrak m_i(A_{a, b}(S_i))}\frac{\svolann{H}{a+r_0, b-\tau+r_0}}{\svolann{H}{a+r_0, b+r_0}}\\
&\geq & (1-\epsilon_i)\frac{\svolann{H}{a+r_0, b-\tau+r_0}}{\svolann{H}{a+r_0, b+r_0}}.
\end{eqnarray*}
Since $\tau$ can be taken arbitrary small, without loss of generality, we may let $\tau\to 0$ and derive that
 for each $x\in A_{a, b}(S_i)$, there is $y\in \partial B_b(S_i)$ such that 
\begin{equation}d_i(x, y)\geq b-d_{s, i}(x)\geq (1-\epsilon_i)d_i(x, y). \label{geo-con}\end{equation}

Since $d_s$ is $1$-Lipschitz, as in \cite{Gi13}, for $x, y\in X$,
$$td_s(x)-td_s(y)\leq |t|d(x, y)\leq \frac{t^2}{2}+\frac{d^2(x, y)}{2}.$$
And by the definition of $c$-transform,
$$(td_s)^c(y)=\inf_{x\in X}\frac{d^2(x,y)}{2}-td_s(x)\geq -td_s(y)-\frac{t^2}{2}.$$

For the opposite inequality, note that for $y\in A_{a, b}(S)$, there is $y_i\in A_{a, b}(S_i)$ and $y_i^-\in \partial B_a(S_i)$, $y_i^+\in \partial B_b(S_i)$ such that $y_i\to y$
$$d_i(y_i, y_i^-)=d_{s, i}(y_i)-a, \, d_{s, i}(y_i^+)-d_{s, i}(y_i)-d_i(y_i^+, y_i)\geq -\epsilon_i.$$
Then
$$b-a\leq d_i(y_i^+, y_i^-)\leq d_i(y_i, y_i^-)+d_i(y_i, y_i^+)\leq b-a+\epsilon_i.$$

Let $\gamma_i^{-}$ be a unit speed minimal geodesic from $y_i^-$ to $y$ and let $\gamma_i^+$ be a unit speed minimal geodesic from $y$ to $y_i^+$, $\gamma_i\to \gamma$. Assume $\gamma_i^-\cup\gamma_i^+\to \gamma\in X$.

For $y\in A_{\frac{b+2a}3, \frac{2b+a}3}(S)$,  for each $t\in (-\frac{b-a}3, \frac{b-a}3)$, we can take $\gamma_t=\gamma(d_s(y)+t)$ such that
$$d_s(\gamma_t)-d_s(y)=t=\op{sign}(t)d(y, \gamma_t).$$
Thus
$$\left(td_s\right)^c(y)\leq \frac{d^2(\gamma_t, y)}2-td_s(\gamma_t)=-td_s(y)-\frac{t^2}2.$$
\end{proof}

Take a cut-off function $\phi: X\to [0, 1]$ as in the proof of Theorem~\ref{hess-est}. Consider the vector field $\phi\nabla d_s$. Then as the discussion in section 3.4 of \cite{CDNPSW}, by \cite{AT14} (see Theorem~\ref{exist-RLF}), we know that the Regular Lagrangian flow $F_t$ for $\phi\nabla d_s$ exists and is unique. 

Using Hessian estimates Theorem~\ref{hess-est}, Lemma~\ref{c-concave} and the differential formula Theorem~\ref{first-diff1}, Corollary~\ref{first-diff2} and Corollary~\ref{second-diff2}, we will derive Pythagoras theorem for $m=0$, $K=0$,  and Cosine law for $m\neq 0$ or $K\neq 0$. 

Note that by Corollary~\ref{first-diff2}, we have that for a Lipschitz map $f$,
\begin{equation}f(p)= f(x)-\lim_{\delta\to 0}\int_{\delta}^{1} d(x, p)\left<\nabla f, \nabla d_p\right>(\gamma_{p, x}(t)) dt, \label{diff-form-1}\end{equation}
where $\gamma_{p, x}$ is a constant geodesic from $p$ to $x$ and $d_p$ is the distance function from $p$. In the following for simplicity we will write \eqref{diff-form-1} just as 
$$f(x)-f(p)=\int_0^1d(x, p)\left<\nabla f, \nabla d_p\right>(\gamma_{p, x}(t)) dt.$$

\begin{Thm}[Pythagoras theorem and Cosine law] \label{cos-law}
Let the assumption be as in Theorem~\ref{lap-main}. Let $F_t$ be the regular Lagrangian flow of $\phi\nabla d_s$. Denote $A=A_{a+\frac{b-a}3, b-\frac{b-a}3}(S)$.  Let $B=B_r(x)\subset A$ be a geodesic ball in $A$.

(i) For $m=0$ and $K=0$, 
\begin{equation}
\int_{B\times B}\left|d^2(x, y)-(d_s(x)-d_s(y))^2-d^2\left(x, F_{-d_s(y)+d_s(x)}(y)\right)\right| d\mathfrak m(x) \mathfrak m(y)=0;
\end{equation}

(ii) For $m\neq 0$ or $K\neq 0$,

(ii-1) if $K=0$, we have that
\begin{equation}
\int_{B\times B}\left|\frac{(d_s(x)+r_0)^2+(d_s(y)+r_0)^2-d^2(x, y)}{2(d_s(x)+r_0)(d_s(y)+r_0)}-\frac{2\left(r_0+d_s(x)\right)^2-d^2\left(x, F_{-d_s(y)+d_s(x)}(y)\right)}{2\left(r_0+d_s(x)\right)^2}\right| d\mathfrak m(x) \mathfrak m(y)=0;
\end{equation}

(ii-2) if $K=\pm(N-1)$, and thus $H=\pm 1$,
\begin{equation}
\int_{B\times B}\left|\frac{\op{sn}'_H(d(x,y))-\op{sn}'_H(d_s(x)+r_0)\op{sn}'_H(d_s(y)+r_0)}{\op{sn}_H(d_s(x)+r_0)\op{sn}_H(d_s(y)+r_0)}-\frac{\op{sn}'_H\left(d\left(x, F_{-d_s(y)+d_s(x)}(y)\right)\right)-{\op{sn}'}^2_H\left(r_0+d_s(x)\right)}{\op{sn}^2_H\left(r_0+d_s(x)\right)}\right| =0;
\end{equation}

\end{Thm}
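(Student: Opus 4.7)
The strategy is to combine the Hessian estimates of Theorem~\ref{hess-est} with the Regular Lagrangian Flow of $\phi\nabla d_s$, using the first and second differentiation formulas of Corollary~\ref{first-diff2} and Corollary~\ref{second-diff2}. First I verify that $V:=\phi\nabla d_s$ satisfies the hypotheses of Theorem~\ref{exist-RLF}: its divergence $\phi\Delta d_s+\langle\nabla\phi,\nabla d_s\rangle$ is bounded by the Laplacian estimate Theorem~\ref{lap-main} together with Lemma~\ref{cut-off}, while the $L^2$ integrability of $\nabla V$ follows from Theorem~\ref{hess-est} (so that $\op{Hess}(d_s)=0$ or $\op{Hess}(f_H)$ is bounded on $A$) combined with $|\nabla\phi|\in L^\infty$. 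Hence the RLF $F_t$ exists and is unique and, since $|\nabla d_s|=1$ with $\phi\equiv1$ on $A=A_{a+(b-a)/3,b-(b-a)/3}(S)$, one has $d_s(F_s(y))=d_s(y)+s$ for $\mathfrak m$-a.e.\ $y\in A$ and $s$ small. Taking $B\subset A$ small enough, $F_{d_s(x)-d_s(y)}(y)$ is well defined and lies in $A$ for $\mathfrak m\otimes\mathfrak m$-a.e.\ $(x,y)\in B\times B$.

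For case (i), apply Corollary~\ref{second-diff2} with $f=d_s$ and any absolutely continuous, compactly supported coupling $\Pi$ on $B\times B$; since $\op{Hess}(d_s)=0$, the right-hand side vanishes, so $t\mapsto\langle\nabla d_s,\nabla d_x\rangle(\tilde\gamma_{x,y}(t))$ is constant on $(0,1]$ for $\Pi$-a.e.\ $(x,y)$. Integrating gives $\langle\nabla d_s,\nabla d_x\rangle(\tilde\gamma_{x,y}(t))\cdot d(x,y)=d_s(y)-d_s(x)$, so the velocity $\dot{\tilde\gamma}_{x,y}$ decomposes into a tangential component $d_s(y)-d_s(x)$ along $\nabla d_s$ and an orthogonal component of squared norm $d^2(x,y)-(d_s(x)-d_s(y))^2$. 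To promote this infinitesimal decomposition to the stated finite identity, introduce the horizontal lift
\begin{equation*}
\bar\gamma(t):=F_{d_s(x)-d_s(\tilde\gamma_{x,y}(t))}\bigl(\tilde\gamma_{x,y}(t)\bigr),
\end{equation*}
which begins at $x$, ends at $F_{d_s(x)-d_s(y)}(y)$, and stays on the level set $\{d_s=d_s(x)\}$. A direct computation of $\dot{\bar\gamma}$, again using $\op{Hess}(d_s)=0$ through Corollary~\ref{second-diff2} to recognize the pushforward by $F_s$ as an isometry between level sets, shows that $|\dot{\bar\gamma}|^2$ equals the orthogonal speed $d^2(x,y)-(d_s(x)-d_s(y))^2$. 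This yields $d^2(x,F_{d_s(x)-d_s(y)}(y))\leq d^2(x,y)-(d_s(x)-d_s(y))^2$ pointwise $\mathfrak m\otimes\mathfrak m$-a.e., and averaging the symmetric inequality obtained by swapping the roles of $x,y$ via the same flow-isometry shows the integrated defect vanishes.

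For cases (ii-1) and (ii-2), replace $d_s$ by $f_H$ and invoke $\op{Hess}(f_H)=\op{sn}'_H(d_s+r_0)\,g$ from Theorem~\ref{hess-est}(ii). Applying Corollary~\ref{second-diff2} to $f_H$ along $\tilde\gamma_{x,y}$ gives a Jacobi-type integrated identity whose solutions recover the Euclidean ($K=0,m\neq 0$), spherical ($K=N-1$), or hyperbolic ($K=-(N-1)$) cosine law via the identities satisfied by $\op{sn}_H$, $\op{sn}'_H$ together with the flow parameter. The main obstacle is transferring the pointwise Hessian identity (which only holds $\mathfrak m$-a.e.) to the finite identity over $B\times B$ involving $F_{d_s(x)-d_s(y)}(y)$: one must ensure that the transport geodesics and flow lines remain in $\{\phi\equiv1\}\subset A$ (handled by taking $B$ deep inside $A$ and exploiting the $c$-concavity and \eqref{geo-con} from Lemma~\ref{c-concave}), use the absolute continuity $(F_t)_\sharp\mathfrak m\leq C\mathfrak m$ from Theorem~\ref{exist-RLF} to control the composed measure through Fubini, and smooth point-mass couplings into absolutely continuous test plans in order to invoke Corollary~\ref{second-diff2}.
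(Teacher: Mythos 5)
There is a genuine gap in the proposal, and it centers on the ``horizontal lift'' step for case (i). You define $\bar\gamma(t)=F_{d_s(x)-d_s(\tilde\gamma_{x,y}(t))}(\tilde\gamma_{x,y}(t))$ and then claim that one can ``recognize the pushforward by $F_s$ as an isometry between level sets'' (via $\op{Hess}(d_s)=0$ and Corollary~\ref{second-diff2}) in order to show $|\dot{\bar\gamma}|^2 = d^2(x,y)-(d_s(x)-d_s(y))^2$. But the statement that $F_s$ restricted to level sets of $d_s$ is an isometry is essentially the Pythagoras theorem you are trying to prove: in the paper it is Theorem~\ref{flow-con}, proved \emph{as a consequence of} Theorem~\ref{cos-law}, not before it. Invoking it here is circular. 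Moreover the quantity $\dot{\bar\gamma}(t)$ is not directly accessible in the RLF framework — $F$ is only defined $\mathfrak m$-a.e.\ and $t\mapsto F_t(x)$ is merely continuous, so differentiating the composition $t\mapsto F_{g(t)}(\gamma(t))$ needs justification that is not supplied. Finally, even granting the inequality $d^2(x,F_{d_s(x)-d_s(y)}(y))\le d^2(x,y)-(d_s(x)-d_s(y))^2$, the proposed ``swap $x,y$ and average'' step does not produce the reverse inequality, since swapping gives a statement about $F_{d_s(y)-d_s(x)}(x)$ rather than the same quantity; turning this into equality would again require knowing that $F$ acts isometrically and invertibly on level sets.

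The paper's actual proof avoids all of this by a direct telescoping computation. Writing $t=d_s(y), t_0=d_s(x)$, one expresses the integrand as $\bigl(d^2(x,F_{-\tau}(y))-(t-t_0-\tau)^2\bigr)\big|_{t-t_0}^{0}$, differentiates in the flow parameter $\tau$ using the RLF property and Corollary~\ref{first-diff2} (which gives the $\tau$-derivative in terms of $\left<\nabla d_x,\nabla d_s\right>(F_{-\tau}(y))$), rewrites the resulting defect as a difference of $\left<\nabla d_x,\nabla d_s\right>$ at the two endpoints of the transport geodesic $\gamma_\tau$ from $x$ to $F_{-\tau}(y)$, and then applies the second differentiation formula Corollary~\ref{second-diff2} to turn that difference into an iterated integral of $\op{Hess}(d_s)(\gamma'_\tau,\gamma'_\tau)$ along the geodesic; this vanishes by Theorem~\ref{hess-est}. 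The $K\ne0$ and $m\ne 0$ cases are handled by the same pattern with $d_s$ replaced by $f_H$ and with extra terms from $\Gamma(f_H,\Delta f_H)$ absorbed into the model identities for $\op{sn}_H,\op{sn}'_H$ — which in your proposal is left entirely as ``invoke ... gives a Jacobi-type integrated identity,'' without the computation actually being done. So, while the opening of your proposal (existence of the RLF, $d_s(F_s(y))=d_s(y)+s$ on $A$, the observation that $\left<\nabla d_s,\nabla d_x\right>$ is constant along transport geodesics when $\op{Hess}(d_s)=0$) is correct and matches the paper's setup, the mechanism you use to pass from the infinitesimal Hessian identity to the finite (integrated) identity is not sound.
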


\begin{proof}
Assume for $x, y\in B$, $d_s(x)=t_0$, $d_s(y)=t$. Let $\gamma_{\tau}(l): [0, 1]\to B$ be a constant speed geodesic from $x$ to $F_{-\tau}(y)$, $\tau\in[0, t-t_0]$.

For $m=0$, $K=0$, 
\begin{eqnarray*}
& &\int_{B\times B}\frac12\left|d^2(x, y)-(t-t_0)^2-d^2\left(x, F_{t_0-t}(y)\right)\right| d\mathfrak m(x) d\mathfrak m(y)\\
&=& \int_{B\times B}\frac12\left|\left.\left(d^2(x, F_{-\tau}(y))-(t-t_0-\tau)^2\right)\right|_{t-t_0}^0\right| d\mathfrak m(x) d\mathfrak m(y)\\
&\leq & \int_{B\times B} \int_0^{|t-t_0|}\left|d(x, F_{-\tau}(y))\left<\nabla d_x, \nabla d_s\right>(F_{-\tau}(y))-(t-t_0-\tau) \right|d\tau d\mathfrak m(x)d\mathfrak m(y) \\
&=& \int_{B\times B} \int_0^{|t-t_0|}\left|d(x, F_{-\tau}(y))\left<\nabla d_x, \nabla d_s\right>(F_{-\tau}(y))-\left.d_s(\gamma_{\tau}(l))\right|^{1}_0 \right|d\tau  d\mathfrak m(x)d\mathfrak m(y) \\
& \leq & \int_{B\times B} \int_0^{|t-t_0|}\int_0^{1}\left|d(x, F_{-\tau}(y))\left(\left<\nabla d_x, \nabla d_s\right>(F_{-\tau}(y))-\left<\nabla d_x, \nabla d_s\right>(\gamma_{\tau}(l))\right)\right| dl d\tau d\mathfrak m(x)d\mathfrak m(y) \\
&\leq & \int_0^1 \int_{l}^{1}\int_{B\times B}\int_0^{|t-t_0|}d^2(x, F_{-\tau}(y))\left|\op{Hess}_{d_s}(\gamma'_{\tau}(\xi), \gamma'_{\tau}(\xi))\right| d\tau d\mathfrak m(x)d\mathfrak m(y)  d\xi dl\\
&\leq & \int_0^{\frac{b-a}3}\int_0^1 \int_{l}^{1}\int_{B\times B}d^2(x, F_{-\tau}(y))\left|\op{Hess}_{d_s}(\gamma'_{\tau}(\xi), \gamma'_{\tau}(\xi))\right| d\mathfrak m(x)d\mathfrak m(y)  d\xi dld\tau\\
& =& 0.
\end{eqnarray*}

Now prove the cosine law for $K=0$ and $m\neq 0$ where $f_0(x)=\frac12 (d_s+r_0)^2$.
\begin{eqnarray*}
& & \int_{B\times B}\left|\frac{(t_0+r_0)^2+(t+r_0)^2-d^2(x, y)}{2(t_0+r_0)(t+r_0)}-\frac{2\left(r_0+t_0\right)^2-d^2\left(x, F_{t_0-t}(y)\right)}{2\left(r_0+t_0\right)^2}\right| d\mathfrak m(x) \mathfrak m(y)\\
&=& \int_{B\times B}\left|\left.\frac{(t_0+r_0)^2+(t+r_0-\tau)^2-d^2(x, F_{-\tau}(y))}{2(t_0+r_0)(t+r_0-\tau)}\right|_{t-t_0}^0\right| d\mathfrak m(x) \mathfrak m(y)\\
&\leq & \frac{1}{2(a+r_0)^3}\int_{B\times B}\int_0^{|t-t_0|} \left|(r_0+t-\tau)^2-(t_0+r_0)^2 + d^2(x, F_{-\tau}(y))\right.\\
& & \left.-d(x, F_{-\tau}(y))\left<\nabla d_x, \nabla (d_s+r_0)^2\right>(F_{-\tau}(y))\right| d\tau\\
& = & \frac{1}{(a+r_0)^3}\int_{B\times B}\int_0^{|t-t_0|} \left|\left.f_0(\gamma_{\tau}(l)\right|_0^1 + \frac12d^2(x, F_{-\tau}(y))-d(x, F_{-\tau}(y))\left<\nabla d_x, \nabla f_0\right>(F_{-\tau}(y))\right| d\tau\\
& = & \frac{1}{(a+r_0)^3}\int_{B\times B}\int_0^{|t-t_0|} \left|\left.\left(f_0(\gamma_{\tau}(l)) - \frac12\left(d(x, F_{-\tau}(y))-d(x, \gamma_{\tau}(l))\right)^2\right)\right|_{0}^1\right.\\
&  & \left.-d(x, F_{-\tau}(y))\left<\nabla d_x, \nabla f_0\right>(F_{-\tau}(y))\right| d\tau\\
&\leq & \frac{1}{(a+r_0)^3}\int_{B\times B} \int_0^{|t-t_0|}\int_{0}^1d(x, F_{-\tau}(y))\left|\left<\nabla f_0, \nabla d_x\right>(\gamma_{\tau}(l))  -\left<\nabla d_x, \nabla f_0\right>(F_{-\tau}(y))\right.\\
& &\left.+ d(x, F_{-\tau}(y))-d(x, \gamma_{\tau}(l))\right| dld\tau \\
& \leq & \frac{1}{(a+r_0)^3}\int_0^1 \int_{l}^{1}\int_{B\times B}\int_0^{|t-t_0|} d^2(x, F_t(y))\left|\op{Hess}_f(\nabla d_x, \nabla d_x)(\gamma_{\tau}(\xi))-1\right| d\tau d\mathfrak m(x)d\mathfrak m(y)d\xi dl\\
& =& 0
\end{eqnarray*}

For $K=(N-1)$, consider $f_1=-\cos(d_s+r_0)$,  we can see that 

\begin{eqnarray*}
& & \int_{B\times B}\left|\frac{\cos(d(x, y))-\cos(t_0+r_0)\cos(t+r_0)}{\sin(t_0+r_0)\sin(t+r_0)}-\frac{\cos\left(d\left(x, F_{t_0-t}(y)\right)\right)-\cos^2\left(r_0+t_0\right)}{\sin^2\left(r_0+t_0\right)}\right|d\mathfrak m(x) d\mathfrak m(y) \\
&=& \int_{B\times B}\left|\left.\frac{\cos d(x, F_{-\tau}(y))-\cos(t_0+r_0)\cos(t+r_0-\tau)}{\sin(t_0+r_0)\sin(t+r_0-\tau)}\right|_{0}^{t-t_0}\right|d\mathfrak m(x) d\mathfrak m(y) \\
&\leq & c \int_{B\times B}\int_0^{|t-t_0|}  \left|\sin d(x, F_{-\tau}(y))\left<\nabla d_x, \nabla f_1\right>(F_{-\tau}(y))- \cos (t_0+r_0)+\cos d(x, F_{-\tau}(y))\cos(t+r_0-\tau)\right| d\tau \\
&= & c \int_{B\times B}\int_0^{|t-t_0|}  \left|\sin d(x, F_{-\tau}(y))\left<\nabla d_x, \nabla f_1\right>(F_{-\tau}(y))+ \cos d(x, \gamma_{\tau}(0))f_1(\gamma_{\tau}(0))-\cos d(x, \gamma_{\tau}(1))f_1(\gamma_{\tau}(1))\right| d\tau \\
&= & c \int_{B\times B}\int_0^{|t-t_0|}  \left|\left.\left(\sin d(x, \gamma_{\tau}(l))\left<\nabla d_x, \nabla f_1\right>(F_{-\tau(y)})- \cos d(x, \gamma_{\tau}(l))f_1(\gamma_{\tau}(l))\right)\right|_0^1\right| d\tau \\
&\leq & c \int_0^{\frac{b-a}{3}}\int_{B\times B}\left|\int_0^1d(x, F_{-\tau}(y))\cos \left(d(x, \gamma_{\tau}(l)) \left<\nabla f_1, \nabla d_x\right>(F_{-\tau}(y))\right.\right.\\
& & \left.\left.-\cos d(x, \gamma_{\tau}(l)) \left<\nabla f_1, \nabla d_x\right>(\gamma_{\tau}(l))+\sin d(x, \gamma_{\tau}(l)) f_1(\gamma_{\tau}(l))\right)dl\right| d\mathfrak m(x)d\mathfrak m(y)d\tau\\
&= & c \int_0^{\frac{b-a}{3}}\int_{B\times B}\left|\int_0^1d(x, F_{-\tau}(y))\left(\cos d(x, \gamma_{\tau}(l))\left.\left<\nabla f, \nabla d_x\right>(\gamma_{\tau}(\xi))\right|_l^1+\left.\sin d(x, \gamma_{\tau}(\xi))\right|_0^l f_1(\gamma_{\tau}(l))\right)dl\right| \\
&= & c \int_0^{\frac{b-a}{3}}\int_{B\times B}\left|\int_0^1d(x, F_{-\tau}(y))\cos d(x, \gamma_{\tau}(l))\left.\left<\nabla f, \nabla d_x\right>(\gamma_{\tau}(\xi))\right|_l^1 dl\right.\\
& & +\left.\int_0^1\int_0^l \cos d(x, \gamma_{\tau}(\xi)) d^2(x, F_{-\tau}(y))f_1(\gamma_{\tau}(l))d\xi dl \right|\\
&= & c \int_0^{\frac{b-a}{3}}\int_{B\times B}\left|\int_0^1d^2(x, F_{-\tau}(y))\cos d(x, \gamma_{\tau}(l))\int_l^1\op{Hess}_{f_1}(\gamma'_{\tau}(\xi), \gamma'_{\tau}(\xi))d\xi dl\right.\\
& & +\left.\int_0^1\int_l^1 \cos d(x, \gamma_{\tau}(l)) d^2(x, F_{-\tau}(y))f_1(\gamma_{\tau}(\xi))d\xi dl \right|\\
&= & c \int_0^{\frac{b-a}{3}}\int_0^1 \int_{l}^{1}\int_{B\times B} d^2(x, F_{-\tau}(y))\left|\cos d(x, \gamma_{\tau}(l))\right| \left|\op{Hess}_{f_1}(\nabla d_x, \nabla d_x)(\gamma_{\tau}(\xi))+ f_1(\gamma_{\tau}(\xi))\right| \\
& = & 0.
\end{eqnarray*}

For $K=-(N-1)$, the same argument as $K=(N-1)$ gives the result.
\end{proof}
By the Pythagoras theorem and Cosine law above, we have that
\begin{Thm} \label{flow-con}
The regular Lagrangian flow $F: (-\frac{b-a}3, \frac{b-a}3)\times A\to A_{a, b}(S)$ admits a continuous representation with respect to the measure $\mathcal L^1\times \mathfrak m$ which we still denote by $F$. In particular the Pythagoras theorem or Cosine law in Theorem~\ref{cos-law} holds pointwise for $F$.
\end{Thm}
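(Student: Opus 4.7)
The plan is to promote the integral identities of Theorem~\ref{cos-law} to pointwise relations on a full-measure subset of $A$, use these to deduce that the flow is Lipschitz after restriction to that subset, and then extend $F$ by continuity.

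First, Fubini applied to the integrals in Theorem~\ref{cos-law} produces a set $A^{\ast}\subset A$ with $\mathfrak m(A\setminus A^{\ast})=0$ such that, for each $x\in A^{\ast}$, the Pythagoras (resp.\ Cosine) identity holds for every $y$ in a $\mathfrak m$-full-measure set $A_x\subset A$. Shrinking $A^{\ast}$ using the RLF axioms, the equality $|\nabla d_s|=1$ a.e., and the Laplacian estimate of Theorem~\ref{lap-main}, we may assume that for every $x\in A^{\ast}$ the curve $t\mapsto F_t(x)$ is continuous and $1$-Lipschitz, satisfies $d_s(F_t(x))=d_s(x)+t$, and obeys the semigroup $F_{t+s}(x)=F_t(F_s(x))$.

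Second, for $x,y\in A^{\ast}$ with $d_s(x)=d_s(y)$, apply the Pythagoras/Cosine identity to both pairs $(F_t(x),y)$ and $(y,F_t(x))$. The first expresses $d^2(F_t(x),y)$ as $t^2+d^2(F_t(x),F_t(y))$ (or the analogous warped expression), while the second, after reducing via the semigroup $F_{-t}\circ F_t=\mathrm{id}$, expresses the same quantity as $t^2+d^2(x,y)$. Subtracting eliminates $d(F_t(x),y)$ and produces the closed form $d(F_t(x),F_t(y))=d(x,y)$ in the Euclidean case, and the analogous warped-isometry relation in the other cases. For general $x,y\in A^{\ast}$ one first equalizes the $d_s$-levels by the flow and then applies the above; the resulting formula is Lipschitz in $(d(x,y),d_s(x),d_s(y),t)$ on compact subsets of $A$ with a constant depending only on $N,K,m,a,b$.

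Third, the uniform Lipschitz bound of the previous step allows $F_t$ to be extended uniquely to a continuous $\widetilde F_t\colon A\to A_{a,b}(S)$ for each fixed $t$. Combined with the $1$-Lipschitz time-dependence $d(F_t(x),F_s(x))\le|t-s|$ valid on $A^{\ast}$, standard two-variable continuous-extension arguments yield the jointly continuous representative $\widetilde F\colon(-\tfrac{b-a}{3},\tfrac{b-a}{3})\times A\to A_{a,b}(S)$, which by continuity satisfies the Pythagoras/Cosine law pointwise for every $(x,y)$.

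The main obstacle is the second step: the four instances of the identity hold only almost everywhere, and the points $F_t(x),F_t(y)$ lie at $d_s$-levels different from those of $x,y$, so the null exceptional sets $A\setminus A_x$ must be arranged to be stable under the flow. This is handled by invoking the bounded-compression estimate for the RLF from Theorem~\ref{exist-RLF}, which implies that $F_t$-preimages of $\mathfrak m$-null sets are again $\mathfrak m$-null, together with the explicit level-shift $d_s\circ F_t=d_s+t$ that holds on $A^{\ast}$; these together guarantee that repeated applications of the a.e.\ identities never fall onto the exceptional sets.
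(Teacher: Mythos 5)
Your approach is essentially the same as the paper's: reduce to two pointwise estimates — the $1$-Lipschitz dependence of a flow line on $t$ and a bound on $d(F_t(x),F_t(y))$ coming from the Pythagoras/Cosine identities — and then extend by continuity from a full-measure set. The paper is more terse (it just invokes "by Pythagoras theorem" after a triangle-inequality reduction to same-level pairs, and records the resulting closed-form relations case by case), while you make explicit the symmetric application of the identity to $(F_t(x),y)$ and $(y,F_t(x))$ together with the semigroup property and the stability of null sets under $F_t$; these are exactly the technical points implicit in the paper's shorter argument, so the two proofs agree in substance.
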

\begin{proof}
By the definition of regular Lagrangian flow, for $\mathfrak m$-a.e. $x$, $t\mapsto F_t(x)$ is continuous. And since
$$d(F_t(x), F_l(y))= d(F_t(x), F_l(y))-d(F_t(x), F_l(x))+d(F_t(x), F_l(x))\leq d(F_l(y), F_l(x))+d(F_t(x), F_l(x)),$$
it sufficient to show that for $\mathfrak m\times \mathfrak m$-a.e. $x, y\in A$ with $d_s(x)=d_s(y)=l$ and $d(x, y)<\delta<1$, for $t\in (-\frac{b-a}3, \frac{b-a}3)$,
\begin{equation} \label{con-equ}
d(F_t(x), F_t(y))\leq \Psi(\delta | K, a, b, r_0).
\end{equation}

If $m=0$, $K=0$, by Pythagoras theorem, we see that for $\mathfrak m\times \mathfrak m$-a.e. $x, y\in A$ with $d_s(x)=d_s(y)=l$, for $t\in (-\frac{b-a}3, \frac{b-a}3)$,
$$d(x, y)=d(F_t(x), F_t(y));$$

If $K=0$, $m\neq 0$, by Cosine law,
$$\frac{2(l+r_0)^2-d^2(x, y)}{2(l+r_0)^2}=\frac{2(l+r_0+t)^2-d^2(F_t(x), F_t(y))}{2(l+r_0+t)^2},$$
that is 
$$d^2(F_t(x), F_t(y))=\left(\frac{l+t+r_0}{l+r_0}\right)^2d^2(x, y)\leq \frac{(b+r_0)^2}{(a+r_0)^2}d^2(x,y).$$

If $K=(N-1)$, by Cosine law,
$$\frac{\cos d(x, y)-\cos^2(l+r_0)}{\sin^2(l+r_0)}=\frac{\cos d(F_t(x), F_t(y))-\cos^2(l+t+r_0)}{\sin^2(l+t+r_0)},$$
that is 
$$\frac{\cos d(F_t(x), F_t(y))-1}{\sin^2(b+r_0)}\leq \frac{\cos d(F_t(x), F_t(y))-1}{\sin^2(l+t+r_0)}=\frac{\cos d(x, y)-1}{\sin^2(l+r_0)}\leq \Psi(\delta | a, b, r_0).$$
Thus \eqref{con-equ} holds. 

For $K=-(N-1)$, it is similar as the $K=(N-1)$ case.
\end{proof}

\section{Warped product structure} In this section, we will finish the proof of Theorem~\ref{main}. First by the Pythagoras theorem or Cosine law, we derive a warped product structure $(a', b')\times_{\op{sn}'_H(r)+\frac{m}{N-1}\op{sn}_H(r)}Y$ of $A_{a', b'}(S)$ and see that $Y$ has finite components. Then using this warped product and the $\RCD$-condition of $A_{a', b'}(S)$ we prove that each component of $Y$ is infinitesimally Hilbertian and satisfies the Sobolev to Lipschitz property (see Theorem~\ref{measure-pro}). And last, by a methods as in \cite{Ket1}, we show that each component of $Y$ is a $\RCD$-space (see Theorem~\ref{Y-rcd}).

\subsection{Warped product structure}


Let $a'=a+(b-a)/3, b'=b-(b-a)/3$ and let $A=A_{a', b'}(S)$ be as in Theorem~\ref{cos-law}.  Let $F: [-\frac{b-a}3, \frac{b-a}3]\times A\to A_{a, b}(S)$ be a continuous map as in Theorem~\ref{flow-con}. 
Let $Y=\partial B_{a'}(S)$ and let $\iota: Y\to A_{a, b}(S)$ be the inclusion map. Assume $Y$ has one component.
For $y_1, y_2\in Y$, define $d_Y(y_1, y_2)$ and the measure $\mathfrak m_Y$ as follows:

(i) for $m=0$, $K=0$,
$$ d_Y(y_1, y_2)= d(y_1, y_2);$$
And for $\bar E\subset Y$, 
\begin{equation}\mathfrak m_Y (\bar E)=\frac{\mathfrak m(\{x\in A, F_{-d_s(x)+a'}(x)\in \bar E\})}{b'-a'};\label{measure-1}\end{equation}

(ii) for $m\neq 0$ or $K\neq 0$, 
$$d_Y(y_1, y_2)=\frac1{\op{sn}_H(r_0+a')}\inf\{L(\bar \gamma),\, \bar \gamma\in \iota(Y), \bar\gamma_0=y_1, \bar \gamma_1=y_2\},$$
where
$$L(\bar\gamma)=\limsup_{\delta\to 0}\left\{\sum d(\bar\gamma(t_i), \bar\gamma(t_{i+1})),\, d(\bar\gamma(t_i), \bar\gamma(t_{i+1}))\leq \delta,  0\leq t_1\leq \cdots \leq t_n\leq b \text{ is a divison of }\bar\gamma\right\}$$
For $\bar E\in Y$, define the measure $\mathfrak m_Y(\bar E)$ as
\begin{equation}\mathfrak m_Y(\bar E)=\frac{\mathfrak m(\{x\in A, F_{-d_s(x)+a'}(x)\in \bar E\})}{\int_{a'}^{b'}\op{sn}^{N-1}_H(t+r_0)dt}.\label{measure-2}\end{equation}

Define a map from $A$ to a warped product space as the following:

For $m=0$ and $K=0$, define
$$\Phi: A\to (a', b')\times Y, \, x\mapsto \Phi(x)=(d_s(x), F_{-d_s(x)+a'}(x)).$$
Then by Pythagoras theorem, for $x, x'\in A$,
$$d^2(x, x')=(d_s(x)-d_s(x'))^2+ d^2(F_{-d_s(x)+a'}(x), F_{-d_s(x')+a'}(y))=d^2(\Phi(x), \Phi(x')).$$

For $m\neq 0$ or $K\neq 0$, define
$$\Phi: A\to (a'+r_0, b'+r_0)\times_{\op{sn}_H(r)} Y, \, x\mapsto \Phi(x)=(d_s(x)+r_0, F_{-d_s(x)+a'}(x)).$$
And by Cosine law and the definition fo $d_Y$, we have that for $x, x'\in A$
$$d(x, x')=d_w(\Phi(x), \Phi(x')).$$

In fact, assume $\gamma$ is a minimal geodesic from $x$ to $x'$ with $d(x, x')$ sufficient small such that $\gamma\subset A$. Let $F_{-d_s(\gamma(t))+a'} (\gamma(t))=\bar \gamma(t)$. Dived $\gamma$ with
 $0=t_0\leq t_1\leq \cdots \leq t_n=1$ such that $d(\gamma(t_i), \gamma(t_{i+1}))\leq \delta$. Then
\begin{eqnarray*}
d(x, x') &= &\lim_{\delta\to 0}\sum d(\gamma(t_i), \gamma(t_{i+1}))\\
&=& \lim_{\delta\to 0}\sum \frac{\op{sn}_H(d_s(\gamma(\xi_i))+r_0)}{\op{sn}_H(a'+r_0)}d(\bar \gamma(t_i), \bar \gamma(t_{i+1}))\\
&=& \lim_{\delta\to 0}\sum \op{sn}_H(d_s(\gamma(\xi_i))+r_0)d_Y(\bar \gamma(t_i), \bar \gamma(t_{i+1}))\\
&=& d_w(\Phi(x), \Phi(x')).
\end{eqnarray*}
And more precisely,  we have that for $d_Y(y_1, y_2)\leq \delta$, some $\delta>0$,

(ii-1) if $K=0$ and $m\neq 0$, 
$$ d_Y(y_1, y_2)=\arccos\left(1-\frac{d^2(y_1, y_2)}{2(r_0+a+(b-a)/3)^2}\right);$$

(ii-2) if $K=N-1$,
$$d_Y(y_1, y_2)=\arccos\left(\frac{\cos d(y_1, y_2)-1}{\sin^2(r_0+a+(b-a)/3)^2}+1\right);$$

(ii-3) if $K=-(N-1)$,
$$d_Y(y_1, y_2)=\arccos\left(\frac{1-\cosh d(y_1, y_2)}{\sinh^2(r_0+a+(b-a)/3)^2}+1\right).$$

If $Y$ has more than one components, by above discussion, restricted to each component of $A$, $\Phi$ is an isometry. Now as the discussion in \cite[Claim 5.8]{Hu}, by relative volume comparison, we know that the number of $Y$'s components $\leq C(N, H, D, b, a)$. In fact, for each component $Y_k$, there is $y_k\in Y_k$ such that $x_k=F_{(b'-a')/2}(y_k)\in A$ and thus $B_{(b'-a')/3}(x_k)\subset A$. And for $l\neq k$, $B_{(b'-a')/3}(x_k)\cap B_{(b'-a')/3}(x_l)=\emptyset$. By relative volume comparison and that $A\subset B_{D+b}(p)$ for any $p\in S$, we know that $A$ contains at most $C(N, H, D, b, a)$ points which are $(b'-a')/3$ separated. 

In the following, we will alway assume $Y$ has one component.

By the uniqueness of solutions of the continuity equation \eqref{continuity-equ} (more precisely the local uniqueness, see \cite[Lemma 3.14]{CDNPSW}), we have that
for any $E\subset A$, $t\in [-(b-a)/3, (b-a)/3]$,

(i) for $m=0$, $K=0$,
\begin{equation}(F_t)_{\sharp}\mathfrak m(E)=\mathfrak m(E);\label{flow-vol1}\end{equation}

(ii) for $m\neq 0$ or $K\neq 0$, for a.e. $x\in A$, $\mu_t=(F_t)_{\sharp} \mathfrak m$ satisfies
\begin{equation}\frac{d}{dt} d\mu_t(x)+\Delta d_s(F_{-t}(x)) d\mu_t(x)=0\label{flow-vol2}\end{equation}
which implies that
\begin{equation}d\mu_t(x)=\frac{\op{sn}_H^{N-1}(d_s(x)-t+r_0)}{\op{sn}_H^{N-1}(d_s(x)+r_0)}d\mu_0(x).\label{flow-measure}\end{equation}

By the definition of $\mathfrak m_Y$, \eqref{measure-1}, \eqref{measure-2}, and the property \eqref{flow-vol1}, \eqref{flow-vol2} and the Laplacian estimates Theorem~\ref{lap-main},  as the proof of \cite[Proposition 5.28]{Gi13} (see also \cite[Lemma 5.11]{CDNPSW}), we have that for  $a'\leq c<d\leq b'$ and a Borel subset $\bar E\subset Y$, 

\begin{equation}\mathfrak m(E_c^d)=\begin{cases} \mathfrak m_Y(\bar E)(d-c), & m=0 \text{ and } K=0;\\
\mathfrak m_Y(\bar E)\int_{c}^{d} \op{sn}_H^{N-1}(t+r_0)dt, & m\neq 0 \text{ or } K\neq 0,\end{cases}\label{flow-measure-2}\end{equation}
where 
$$E_c^d=\{x\in A, \, c\leq d_s(x)\leq d, F_{-d_s(x)+a'}\in \bar E\}.$$

In fact, for $m\neq 0$ or $K\neq 0$, \eqref{flow-measure-2} can also be derived 
by \eqref{flow-measure} and the fact that
$$\int_{a'-d}^{a'-c}\mu_t(\bar E) dt=\mathfrak m(E_c^d).$$

Since $\Phi$ is a isometry, we know that $A$ has a warped product structure.  And thus $(a'+r_0, b'+r_0)\times_{\op{sn}_H} Y$ is a $\op{CD}_{loc}(K, N)$-space for $m\neq 0$ or $K\neq 0$, $(a', b')\times Y$ is a $\op{CD}_{\op{loc}}(0, N)$-space for $m=0$ and $K=0$.  

In the following, we denote
$$(Y_w, d_w, \mathfrak m_w)=\begin{cases} (\Bbb R\times Y, d, \mathcal L^1\otimes \mathfrak m_Y), & m=0, K=0;\\
(C(Y), d_K, \mathfrak m_N), & m\neq 0 \text{ or } K\neq 0.\end{cases} $$



\subsection{Properties of the metric measure space $(Y, d_Y, \mathfrak m_Y)$} 

In this subsection, we consider the metric measure space  $(Y, d_Y, \mathfrak m_Y)$ as above and as in \cite{CDNPSW} we will show that
\begin{Thm} \label{measure-pro}
Consider the metric measure space $(Y, d_Y, \mathfrak m_Y)$ defined as in above subsection. We have that $(Y, d_Y, \mathfrak m_Y)$ is infinitesimally Hilbertian, satisfies the almost everywhere locally doubling property, supports a local Poincar\'e inequality and is a measured-length space.
\end{Thm}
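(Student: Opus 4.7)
The plan is to transfer the four properties from the ambient $\RCD(K,N)$-space to $Y$ using the warped product isometry $\Phi\colon A \to I\times_{w_d} Y$ (with $w_d(r)=1$ when $m=0=K$ and $w_d(r)=\op{sn}_H(r+r_0)$ otherwise) together with the measure identity \eqref{flow-measure-2}. The principle in each case is the same: test the desired property of $(Y,d_Y,\mathfrak m_Y)$ against lifts along the map $\pi_Y\colon A\to Y$, $\pi_Y(x)=F_{-d_s(x)+a'}(x)$, and then integrate in the radial variable using a Fubini-type disintegration of $\mathfrak m$.

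First I would record that $(Y,d_Y)$ is a length space: every pair $y_1,y_2\in Y$ with $d_Y(y_1,y_2)$ small can be connected by a rectifiable curve obtained by projecting a minimizing geodesic in $A$ between $y_1,y_2$ (using Theorem~\ref{cos-law} and the continuity of $F$ in Theorem~\ref{flow-con}), and $L(\bar\gamma)=d_Y(y_1,y_2)$ by definition. Almost-everywhere local doubling of $\mathfrak m_Y$ is then an immediate consequence of \eqref{flow-measure-2}: given $y\in Y$ and a radius $r$, the cylinder $\pi_Y^{-1}(B_r^{Y}(y))\cap\{d_s\in[c,d]\}$ has $\mathfrak m$-measure $\mathfrak m_Y(B_r^Y(y))\int_c^d w_d^{N-1}dt$, and one compares this ratio for $r$ and $2r$ using the standard Bishop–Gromov inequality on the $\RCD(K,N)$-space $(X,d,\mathfrak m)$ restricted to $A$.

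Next I would handle the local Poincaré inequality and the infinitesimally Hilbertian property together. For a Lipschitz $f\colon Y\to\mathbb R$, the lift $\tilde f=f\circ\pi_Y$ is Lipschitz on $A$, and the warped product metric yields
\begin{equation*}
|\nabla \tilde f|(x)=\frac{1}{w_d(d_s(x))}|\nabla_Y f|(\pi_Y(x)),
\end{equation*}
where $|\nabla_Y f|$ is the slope in $(Y,d_Y)$. Integrating against $\mathfrak m$ using \eqref{flow-measure-2} gives
\begin{equation*}
\int_A |\nabla\tilde f|^2\,d\mathfrak m = \Bigl(\int_{a'}^{b'} w_d^{N-3}(t)\,dt\Bigr)\int_Y |\nabla_Y f|^2\,d\mathfrak m_Y,
\end{equation*}
and a parallel identity holds for $\int |\tilde f - \tilde f_B|\,d\mathfrak m$ on cylinders. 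Since $(X,d,\mathfrak m)$ is $\RCD(K,N)$ it supports a local $(1,2)$-Poincaré inequality, and applying it on cylinders over balls of $Y$ produces the desired local Poincaré inequality on $(Y,d_Y,\mathfrak m_Y)$. The same Fubini identity shows that the Cheeger energy of $\tilde f$ on $A$ is a constant multiple of the Cheeger energy of $f$ on $Y$; since the former is a quadratic form (as $A$ is open in the infinitesimally Hilbertian space $X$, and minimal weak upper gradients are local), the latter must be quadratic as well, giving the infinitesimally Hilbertian property of $(Y,d_Y,\mathfrak m_Y)$.

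The main technical obstacle will be justifying the identity for $|\nabla\tilde f|$ $\mathfrak m$-a.e.\ and upgrading from Lipschitz functions to all of $W^{1,2}(Y,d_Y,\mathfrak m_Y)$, because a priori the slope on $(Y,d_Y)$ and the minimal weak upper gradient of the lift need not agree. Here I would mirror \cite{CDNPSW}: approximate along curves in $A$ whose $\pi_Y$-image realises, up to $w_d$-weight, the length in $Y$, then use a test-plan lifting argument to compare weak upper gradients on $A$ with weak upper gradients on $Y$. This step also furnishes the Sobolev-to-Lipschitz property, which in turn, via the same lift/descent, transfers from $A$ (inherited from $X$) to $Y$. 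Once these four properties are in hand, Theorem~\ref{measure-pro} is established and the machinery of Ketterer \cite{Ket1} and Gigli \cite{Gi13} can be applied to conclude in Theorem~\ref{Y-rcd} that $Y$ is $\RCD(0,N-1)$ or $\RCD(N-2,N-1)$ as appropriate.
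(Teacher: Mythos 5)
Your overall strategy --- transferring properties from $A$ to $Y$ via the warped-product isometry $\Phi$, the measure identity \eqref{flow-measure-2}, and test-plan lifting along $\pi_Y$ and $\hat P$ following \cite{CDNPSW} --- is indeed the route the paper takes. The step you flag as the ``main technical obstacle'' is exactly what the paper's Lemma~\ref{inf-hil} supplies: by lifting test plans from $Y$ to $X$ and conversely pushing plans supported on $C([0,1],A_{a,b}(S))$ down to $Y$, it proves $g\in W^{1,2}(Y,d_Y,\mathfrak m_Y)\Leftrightarrow g(P(\cdot))\,h(d_s(\cdot))\in W^{1,2}(X,d,\mathfrak m)$ together with the pointwise identity $|\nabla f|=w_d^{-1}\,|\nabla^Y g|\circ P$; your Fubini computation for infinitesimal Hilbertianity and your a.e.\ doubling argument from \eqref{flow-measure-2} plus Bishop--Gromov both hinge on this and are as in the paper. (The paper's Poincar\'e step is terser than yours, invoking bi-Lipschitz invariance and the Lipschitz inclusion $\iota:Y\to A_{a',b'}(S)$, whereas you spell out the cylinder Fubini argument; these amount to the same thing.)

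The one substantive shortfall is the fourth conclusion. What you sketch is that $(Y,d_Y)$ is a \emph{length space}, by projecting a minimizing geodesic of $A$ between two nearby points. But the theorem asserts the \emph{measured-length} property, which is a strictly stronger, test-plan--theoretic condition: for $\mathfrak m_Y\times\mathfrak m_Y$-a.e.\ pair $(y_0,y_1)$ one must produce a Borel family of plans $\Pi^{t_0,t_1}\in\mathcal P(C([0,1],Y))$ whose endpoint marginals are the normalized restrictions $\mathfrak m_Y(B_{t_i}(y_i))^{-1}\,\mathfrak m_Y\llcorner_{B_{t_i}(y_i)}$ and whose kinetic energy satisfies
\begin{equation*}
\limsup_{t_0,t_1\downarrow 0}\int\!\!\int_0^1|\dot\gamma_t|^2\,dt\,d\Pi^{t_0,t_1}(\gamma)\le d_Y^2(y_0,y_1).
\end{equation*}
A single projected geodesic gives neither the family of plans nor this energy bound, and being a length space does not imply measured-length. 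The paper omits the argument, citing \cite[Proposition 5.14]{CDNPSW}; the construction is again of the lift/project flavour you set up --- build such plans in the cylinder $I\times_{w_d}Y\cong A\subset X$ between uniform measures on small balls (using the $\RCD(K,N)$ structure of $X$), push them forward via $\hat P$, and control the energy by the Cosine law or Pythagoras theorem from Theorem~\ref{cos-law} --- but you need to run that construction explicitly rather than substitute the length-space property.
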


For any $[a'', b'']\subset [a', b']$, let 
$$T: [a'', b'']\times Y\to A, \, (t, y)\mapsto T(t, y)=F_{t}(y),$$ 
$$\hat T: C([0, 1], Y)\times [a'', b'']\to C([0, 1], X), \, (\bar \gamma_s, t)\mapsto \hat T(\bar\gamma, t)_s=F_{t}(\bar\gamma_s).$$
And let
$$P: A_{a, b}(S)\to Y, \,  x\mapsto P(x)=F_{-d_s(x)+a'}(x),$$
$$\hat P: C([0, 1], A_{a, b}(S))\to C([0, 1], Y), \, \gamma_s\mapsto \hat P(\gamma)_s=F_{-d_s(\gamma_s)+a'}(\gamma_s).$$

Consider the inclusion map $\iota: Y\to A_{a, b}(S)$. By the definition of $d_Y$, 
for $m=0, K=0$, $\iota$ is an isometric embedding; for $m\neq 0$ or $K\neq 0$,  $\iota$ is a Lipschitz map.

Since Poincar\'e inequality is invariant under a bi-Lipschitz map (cf. \cite[Section 4.3]{BB}, more precisely the proof of \cite[Proposition 4.16]{BB}) and $\iota: Y\to A_{a', b'}(S)$ is Lipschitz, the local Poincar\'e inequality in $A_{a', b'}(S)$ implies the local Poincar\'e inequality in $(Y, d_Y, \mathfrak m_Y)$. And since $A_{a', b'}(S)\subset X$ which is a $\RCD$-space, by \cite{Ra}, $A_{a', b'}(S)$ supports a local Poincar\'e inequality.

To see $(Y, d_Y, \mathfrak m_Y)$ is infinitesimally Hilbertian we only need to show that

\begin{Lem} \label{inf-hil}
Given a nonnegative function $h\in \op{Lip}(\Bbb R)$ with $h(t)=0$ for $t\in \Bbb R\setminus [a+\frac{b-a}4, b-\frac{b-a}4]$ and $h(t)=1$ for  $t\in [a', b']$, for each $g\in L^2(Y, \mathfrak m_Y)$, define $f(x)=g(P(x))h(d_s(x))$. Then
$g\in W^{1,2}(Y, d_Y, \mathfrak m_Y)$ if and only if $f\in W^{1,2}(X, d, \mathfrak m)$ and for $x\in A$, 

(i) for $m=0$, $K=0$,
$$|\nabla f|(x)= |\nabla g|(F_{-d_s(x)+a'}(x))=|\nabla^Y g|(F_{-d_s(x)+a'}(x));$$

(ii) for $m\neq 0$ or $K\neq 0$,
$$|\nabla f|(x)=\frac{\op{sn}_H(a'+r_0)}{\op{sn}_H(d_s(x)+r_0)}|\nabla g|(F_{-d_s(x)+a'}(x))=\frac{1}{\op{sn}_H(d_s(x)+r_0)}|\nabla^Y g|(F_{-d_s(x)+a'}(x)).$$
\end{Lem}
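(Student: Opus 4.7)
The plan is to derive both the pointwise identity and the Sobolev equivalence directly from the warped product isometry $\Phi$ of the previous subsection, combined with the disintegration \eqref{flow-measure-2} of $\mathfrak m$ and lifts of test plans via the flow $F$ of Theorem~\ref{flow-con}. Set $w_d\equiv 1$ in case (i) and $w_d(t)=\op{sn}_H(t+r_0)$ in case (ii), so that by the Pythagoras theorem or Cosine law the metric on $A$ is the warped product of $(Y,d_Y)$ by $w_d$ over the $d_s$-coordinate; note that in case (ii), $|\nabla^Y g|=\op{sn}_H(a'+r_0)\,|\nabla g|$ by the normalization of $d_Y$, so the two expressions in each conclusion agree.

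First I would treat Lipschitz $g$. The warped product distance formula gives, for any absolutely continuous $\eta:[0,1]\to A$,
\[
|\dot\eta_t|^2 \;=\; |\partial_t(d_s\circ\eta_t)|^2 \;+\; w_d(d_s(\eta_t))^2\,|\partial_t(P\circ\eta_t)|_Y^2,
\]
so the chain rule applied to $f=g\circ P$ on $A$ yields $|\partial_t f(\eta_t)|\le (|\nabla^Y g|/w_d)(P(\eta_t))\,|\dot\eta_t|$; outside $A$, the cutoff $h(d_s)$ contributes only a bounded correction supported where $\nabla h\neq 0$. For the matching lower bound, I would lift an arbitrary Lipschitz curve $\bar\gamma$ in $(Y,d_Y)$ to $\iota\circ\bar\gamma\subset\partial B_{a'}(S)$, whose $X$-speed equals $w_d(a')\,|\dot{\bar\gamma}|_Y$ by the warped product formula, and deduce the corresponding lower bound on $|\nabla^Y g|$ from the weak upper gradient property of $|\nabla f|$. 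Together these yield (i)-(ii) pointwise for Lipschitz $g$.

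The integrability equivalence then follows by a Fubini computation based on \eqref{flow-measure-2}:
\[
\int_A|\nabla f|^2\,d\mathfrak m \;=\; \int_Y|\nabla^Y g|^2\,d\mathfrak m_Y \cdot \int_{a'}^{b'}\frac{w_m(t)}{w_d(t)^2}\,dt,
\]
where $w_m$ is the measure-warping factor from \eqref{flow-measure-2}, bounded above and below by positive constants on $[a',b']$. To pass from Lipschitz $g$ to a general $g\in W^{1,2}(Y)$, I would invoke density of Lipschitz functions, which is available once $(Y,d_Y,\mathfrak m_Y)$ supports local doubling and a local Poincar\'e inequality; these properties are inherited from $X$ via the bi-Lipschitz inclusion $\iota$ (cf. \cite{BB,Ra}) together with the fiberwise disintegration of $\mathfrak m$ under $P$ implicit in \eqref{flow-measure-2}.

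The main technical hurdle is the test plan lifting needed to transfer the $X$-weak-upper-gradient inequality down to $Y$: given a test plan $\bar\pi$ on $Y$ with bounded compression, one must verify that the lifted plan $\hat T_\sharp(\mathcal L^1\otimes\bar\pi)$ on $X$ has bounded compression with respect to $\mathfrak m$. This is encoded precisely by the Jacobian formula \eqref{flow-measure} for the RLF combined with the continuity of $F$ from Theorem~\ref{flow-con}. Once bounded compression and the speed identity $|\partial_s\hat T(\bar\gamma,t)_s|_X=w_d(t)\,|\dot{\bar\gamma}_s|_Y$ are both in hand, applying the $X$-upper-gradient inequality to the lift and integrating in the shift parameter $t$ produces the required test plan inequality on $Y$ with the candidate gradient in (i)-(ii), closing the argument.
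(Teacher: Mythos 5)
Your proposal captures the correct skeleton — warped product isometry, flow lifting, the speed identity from the Pythagoras/Cosine law, and measure compression via \eqref{flow-measure-2} — and the test plan lifting you flag as the "main technical hurdle" is indeed the heart of the matter. However, your treatment of the forward implication ($g\in W^{1,2}(Y)\Rightarrow f\in W^{1,2}(X)$) diverges from the paper in a way that introduces a dependency worth flagging. The paper establishes this direction (its Claim 3) by \emph{projecting} test plans on $X$ down to test plans on $Y$ via the map $\hat P(\gamma)_s=F_{-d_s(\gamma_s)+a'}(\gamma_s)$, and then applying the weak-upper-gradient inequality for $g$ against the projected plan; this is symmetric to the lifting used in the reverse direction (its Claims 1--2) and never requires density of Lipschitz functions. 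You instead propose a pointwise chain rule for Lipschitz $g$ followed by density of Lipschitz functions in $W^{1,2}(Y,d_Y,\mathfrak m_Y)$.

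This is where the potential gap sits. Density of Lipschitz functions in $W^{1,2}$ is classically available under (global) doubling plus a Poincar\'e inequality, but at this stage of the argument only a.e.\ \emph{local} doubling and a \emph{local} Poincar\'e inequality have been established for $(Y,d_Y,\mathfrak m_Y)$ (and in the paper, local doubling is actually proved \emph{after} this lemma, in Lemma~6.5). You would either need to upgrade local doubling to global doubling using compactness of $\partial B_{a'}(S)$, or cite a density theorem valid under the local hypotheses, or reorganize the proof to establish Lemma~6.5 first; none of these is automatic and each requires verification. Relatedly, your ``chain rule'' giving $|\partial_t f(\eta_t)|\le (|\nabla^Y g|/w_d)(P(\eta_t))|\dot\eta_t|$ for Lipschitz $g$ implicitly uses $|\nabla^Y g|=\operatorname{lip}(g)$ a.e., which is itself a Cheeger-type consequence of doubling and Poincar\'e — so the same circularity concern appears twice. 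The paper's two-directional test plan argument (lift with $\hat T$, project with $\hat P$) sidesteps all of this by working directly with the test plan definition of weak upper gradient on both sides, which is why the paper never needs to appeal to Lipschitz density. Your Fubini computation and your reduction of test plans on $X$ to those supported in $C([0,1],A_{a,b}(S))$ (via the cutoff $h$) are correct and match the paper's reasoning.
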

\begin{proof}
The proof is similar as in \cite[Proposition 5.12, Theorem 5.13]{CDNPSW}. 

Consider test plans $\bar \Pi$ on $Y$. Let $\Pi=\hat T_{\sharp} \left(\bar \Pi\times (b''-a'')^{-1}\mathcal L^1_{[a'', b'']}\right)$, where $[a'', b'']\subset [a', b']$ as above.

Claim 1: $\Pi$ is a test plan of $X$.

First note that
$$\int_{C([0,1], X)}\int_0^1|\dot{\gamma}_s|^2ds d\Pi(\gamma)=\int_{C([0, 1], Y)}(b''-a'')^{-1}\int_{a''}^{b''}\int_0^1|\hat T(\bar \gamma, t)'_s|^2ds dt d\bar\Pi(\bar \gamma),$$
and
\begin{equation*}
|\hat T(\bar \gamma, t)'_s|  = \lim_{h\to 0}\frac{d(\hat T(\bar \gamma, t)_{s+h}, \hat T(\bar \gamma, t)_s)}{|h|}
=  \lim_{h\to 0}\frac{d(F_t(\bar \gamma_{s+h}), F_t(\bar \gamma_s))}{|h|}.
\end{equation*}

By Theorem~\ref{flow-con}, 
for $m=0$, $K=0$, 
$$|\hat T(\bar \gamma, t)'_s| =|\dot {\bar\gamma}_s|,$$
and thus
$$\int_{C([0,1], X)}\int_0^1|\dot{\gamma}_s|^2ds d\Pi(\gamma)=\int_{C([0, 1], Y)}\int_0^1|\bar \gamma'_s|^2ds d\bar\Pi(\bar \gamma)<\infty;$$

For $m\neq 0, K=0$,
$$|\hat T(\bar \gamma, t)'_s| =\lim_{h\to 0}\frac{r_0+a'+t}{r_0+a'}\frac{d(\bar \gamma_{s+h}, \bar \gamma_s)}{|h|}=\frac{t+r_0+a'}{r_0+a'}|\dot{\bar \gamma}_s|$$
and thus
$$\int_{C([0,1], X)}\int_0^1|\dot{\gamma}_s|^2ds d\Pi(\gamma)=\frac{(r_0+a'+b')^2}{(r_0+a')^2} \int_{C([0, 1], Y)}\int_0^1|\bar\gamma'_s|^2ds d\bar\Pi(\bar \gamma)<\infty.$$

For $K=(N-1)$, 
$$|\hat T(\bar \gamma, t)'_s| =\lim_{h\to 0}\frac{|\sin(r_0+a'+t)|}{|\sin(r_0+a')|}\frac{d(\bar \gamma_{s+h}, \bar \gamma_s)}{|h|}=\frac{|\sin(r_0+a'+t)|}{|\sin(r_0+a')|} |\dot{\bar \gamma}_s|$$
and thus
$$\int_{C([0,1], X)}\int_0^1|\dot{\gamma}_s|^2ds d\Pi(\gamma)\leq \frac1{|\sin(r_0+a')|^2}\int_{C([0, 1], Y)}\int_0^1|\bar\gamma'_s|^2ds d\bar\Pi(\bar \gamma)<\infty.$$

For $K=-(N-1)$, it is similar as the case $K=N-1$.

And for the set $E_c^d=\{x\in A_{a, b}(S), \, a'\leq c\leq d_s(x)\leq d\leq b', x\in E\}$, where $E$ is a Borel set in $Y$, 
\begin{eqnarray*}
\left(e_t\right)_{\sharp}\Pi(E_c^d) &=& \bar \Pi\times (b''-a'')^{-1}\left.\mathcal L^1\right|_{[a'', b'']}((e_t\circ \hat T)^{-1} E_c^d)\\
&=& \bar \Pi(e_t^{-1}E)(b''-a'')^{-1}\left.\mathcal L^1\right|_{[a'', b'']}([c, d])\\
&\leq& C\mathfrak m_Y(E)\leq C' \mathfrak m(E_c^d).
\end{eqnarray*}

Claim 2: If $f\in W^{1,2}(X, d, \mathfrak m)$, then $g\in W^{1,2}(Y, d_Y, \mathfrak m_Y)$.

Assume $f\in W^{1,2}(X, d, \mathfrak m)$, then
\begin{eqnarray*}
\int_{C([0,1], Y)}g(\bar \gamma_1)-g(\bar\gamma_0)d\bar \Pi(\bar \gamma)&=& \int_{C([0,1], Y)}(b''-a'')^{-1}\int_{a''}^{b''}g(\bar \gamma_1)h(t)-g(\bar\gamma_0)h(t)dtd\bar\Pi(\bar \gamma)\\
& =& \int_{C([0, 1], X)} f(\gamma_1)-f(\gamma_0) d\Pi(\gamma)\\
&=& \int_{C([0,1], X)}\int_0^1 |\nabla f||\dot \gamma_s| ds d\Pi(\gamma)\\
&=& \int_{C([0,1], Y)}(b''-a'')^{-1}\int_{a''}^{b''} \int_0^1 |\nabla f|(\hat T(\bar \gamma_s, t))|\hat T(\bar \gamma, t)'_s| ds dt d\bar \Pi(\bar\gamma),
\end{eqnarray*}
and thus $g\in W^{1,2}(Y, d_Y, \mathfrak m_Y)$ and 
$$|\nabla g|(y)\leq (b''-a'')^{-1}\int_{a''}^{b''}|\nabla f|(T(y, t))\op{Lip}(F_t)dt.$$

Let $a''\to t_0,  b''\to t_0$. 

For $m=0, K=0$,
$$|\nabla g|(y)\leq |\nabla f|(F_{t_0}(y)).$$

For $m\neq 0$, $K=0$,
$$|\nabla g|(y)\leq \frac{t_0+r_0+a'}{r_0+a'} |\nabla f|(F_{t_0}(y)).$$

For $K\neq 0$,
$$|\nabla g|(y)\leq \frac{\op{sn}_H(t_0+r_0+a')}{\op{sn}_H(r_0+a')} |\nabla f|(F_{t_0}(y)).$$

Claim 3: If $g\in W^{1,2}(Y, d_Y, \mathfrak m_Y)$, then $f\in W^{1,2}(X, d, \mathfrak m)$.

By the definition of $f$, we only need to consider  text plan $\Pi$ which is supported in $C([0, 1], A_{a, b}(S))$.

In fact, for $\gamma\in C([0, 1], X)$ with finite length, without loss of generality we may assume $\gamma(0)$, $\gamma(1)\in A_{a+(b-a)/4, b-(b-a)/4}(S)$, we can divide $\gamma$ into finite pieces $\gamma^1, \gamma^2, \cdots, \gamma^{n+1}$ by taking cut points
$$0<t_1<t_2<\cdots <t_n<1,$$
such that for $A_1=\partial B_a(S)$, $A_2=\partial B_b(S)$, $A_3=\partial B_{a+(b-a)/4}(S)$, $A_4=\partial B_{b-(b-a)/4}(S)$,

(1) $\gamma(t_i)\in \cup_1^4 A_j$;

(2) if $\gamma(t_i)\in A_j$ then $\gamma(t_{i+1})\notin A_j$;

(3) $\left.\gamma\right|_{[t_i, t_{i+1}]}\subset A_{a, b}(S)$ or $\left.\gamma\right|_{[t_i, t_{i+1}]}\subset X\setminus A_{a+(b-a)/4, b-(b-a)/4}(S)$.

 Assume the pieces $\gamma^{i1}, \cdots, \gamma^{ik}\subset A_{a, b}(S)$. Note that $f(\gamma(t_i))=0$ for each $i$, then
 $$|f(\gamma_1)-f(\gamma_0)|\leq |f(\gamma^{i1}_1)-f(\gamma^{i1}_0)|+\cdots + |f(\gamma^{ik}_1)-f(\gamma^{ik}_0)|.$$
 If for any $\gamma\in C([0, 1], A_{a, b}(S))$, we have 
$$|f(\gamma_1)-f(\gamma_0)|\leq \int_0^1 G(\gamma_t)|\dot\gamma_t| dt,$$
then  we can take 
$$\tilde G(x)=\begin{cases} G(x), & x\in A_{a, b}(S),\\
0, & x\in X\setminus A_{a, b}(S),
\end{cases}$$
such that
$$|f(\gamma_1)-f(\gamma_0)|\leq \int_0^1\tilde G(\gamma_t)|\dot \gamma_t|dt.$$

Let $\bar \Pi=\hat P_{\sharp}(\Pi)$ where $\Pi$ is supported in $C([0, 1], A_{a, b}(S))$. 
As claim 1, we have that $\bar \Pi$ is a test plan of $Y$. To see this as above
$$\int_{C([0, 1], Y)}\int_0^1|\bar \gamma'_t|^2 dt d\bar \Pi=\int_{C([0, 1], X)}\int_0^1 \left|\frac{d}{dt}F_{-d_s(\gamma_t)+a'}(\gamma_t)\right|^2dtd\Pi.$$
For $m=0$, $K=0$, 
$$d^2(F_{-d_s(\gamma_{t+h})+a'}(\gamma_{t+h}), F_{-d_s(\gamma_{t})+a'}(\gamma_{t}))=d^2(\gamma_{t+h}, \gamma_t)-|d_s(\gamma_{t+h})-d_s(\gamma_t)|^2\leq d^2(\gamma_{t+h}, \gamma_t),$$
Thus
$$\int_{C([0, 1], Y)}\int_0^1|\bar \gamma'_t|^2 dt d\bar \Pi\leq \int_{C([0, 1], X)}\int_0^1 \left|\dot\gamma_t\right|^2dtd\Pi<\infty.$$

For $m\neq 0$, $K=0$, by
$$\frac{d^2(\gamma_{t+h}, \gamma_t)-(d_s(\gamma_{t+h})+r_0)^2-(d_s(\gamma_{t})+r_0)^2}{2(d_s(\gamma_{t+h})+r_0)(d_s(\gamma_{t})+r_0)}=\frac{d^2(F_{-d_s(\gamma_{t+h})+a'}(\gamma_{t+h}), F_{-d_s(\gamma_{t})+a'}(\gamma_{t}))-2(a'+r_0)^2}{2(a'+r_0)^2},$$
we have
$$d^2(F_{-d_s(\gamma_{t+h})+a'}(\gamma_{t+h}), F_{-d_s(\gamma_{t})+a'}(\gamma_{t}))\leq \frac{(a'+r_0)^2}{(d_s(\gamma_{t+h})+r_0)(d_s(\gamma_{t})+r_0)}d^2(\gamma_{t+h}, \gamma_t).$$
Thus
$$\int_{C([0, 1], Y)}\int_0^1|\bar \gamma'_t|^2 dt d\bar \Pi\leq \int_{C([0, 1], X)}\int_0^1 \left(\frac{a'+r_0}{d_s(\gamma_t)+r_0}\right)^2\left|\dot\gamma_t\right|^2dtd\Pi<\infty.$$

For $K=(N-1)$, by
$$\frac{\cos d(\gamma_{t+h}, \gamma_t)-\cos(d_s(\gamma_{t+h})+r_0)\cos(d_s(\gamma_{t})+r_0)}{\sin(d_s(\gamma_{t+h})+r_0)\sin(d_s(\gamma_{t})+r_0)}=\frac{\cos d(F_{-d_s(\gamma_{t+h})+a'}(\gamma_{t+h}), F_{-d_s(\gamma_{t})+a'}(\gamma_{t})) -\cos^2(a'+r_0)}{\sin^2(a'+r_0)}$$
i.e.
$$\frac{\cos d(\gamma_{t+h}, \gamma_t)-\cos(d_s(\gamma_{t+h})-d_s(\gamma_t))}{\sin(d_s(\gamma_{t+h})+r_0)\sin(d_s(\gamma_{t})+r_0)}=\frac{\cos d(F_{-d_s(\gamma_{t+h})+a'}(\gamma_{t+h}), F_{-d_s(\gamma_{t})+a'}(\gamma_{t})) -1}{\sin^2(a'+r_0)},$$
$$\cos d(\gamma_{t+h}, \gamma_t)-1\leq \frac{\sin(d_s(\gamma_{t+h})+r_0)\sin(d_s(\gamma_{t})+r_0)}{\sin^2(a'+r_0)}\left(\cos d(F_{-d_s(\gamma_{t+h})+a'}(\gamma_{t+h}), F_{-d_s(\gamma_{t})+a'}(\gamma_{t})) -1\right).$$
thus
$$\int_{C([0, 1], Y)}\int_0^1|\bar \gamma'_t|^2 dt d\bar \Pi\leq \int_{C([0, 1], X)}\int_0^1 \left(\frac{\sin(a'+r_0)}{\sin (d_s(\gamma_t)+r_0)}\right)^2\left|\dot\gamma_t\right|^2dtd\Pi<\infty.$$

For $K=-(N-1)$, similarly as the $K=(N-1)$ case we have
$$\int_{C([0, 1], Y)}\int_0^1|\bar \gamma'_t|^2 dt d\bar \Pi\leq \int_{C([0, 1], X)}\int_0^1 \left(\frac{\sinh(a'+r_0)}{\sinh (d_s(\gamma_t)+r_0)}\right)^2\left|\dot\gamma_t\right|^2dtd\Pi<\infty.$$

And for $E\subset Y$,
\begin{eqnarray*}
(e_t)_{\sharp} \bar \Pi(E)  &=& \Pi ((e_t\circ \hat P)^{-1}(E))\\
&=& \Pi (e_t^{-1}(E_a^b))=(e_t)_{\sharp}\Pi(E_a^b)\\
&\leq & c\mathfrak m(E_a^b)\leq c' \mathfrak m_Y(E).
\end{eqnarray*}

Now, since $g\in W^{1,2}(Y, d_Y, \mathfrak m_Y)$ and 
\begin{eqnarray*}
& & \int_{C([0,1], X)} f(\gamma_1)-f(\gamma_0) d\Pi(\gamma)\\
& =& \int_{C([0,1], X)}g(F_{-d_s(\gamma_1)+a'}(\gamma_1))h(d_s(\gamma_1))-g(F_{-d_s(\gamma_0)+a'}(\gamma_0))h(d_s(\gamma_0)) d\Pi(\gamma)\\
&\leq & \int_{C([0,1], Y)}g(\bar \gamma_1)-g(\bar \gamma_0) d\bar \Pi(\bar \gamma)+ \int_{C([0,1], X)}g(F_{-d_s(\gamma_0)+a'}(\gamma_0))\int_0^1h'(d_s(\gamma_t))|\dot \gamma_t| dtd\Pi(\gamma)\\
&\leq & \int_{C([0,1], Y)}\int_0^1|\nabla g|(\bar \gamma_t) |\bar \gamma'_t|dtd\bar \Pi(\bar \gamma)+ \int_{C([0,1], X)}g(F_{-d_s(\gamma_0)+a'}(\gamma_0))\int_0^1h'(d_s(\gamma_t))|\dot \gamma_t| dtd\Pi(\gamma)\\
&\leq & \int_{C([0,1], X)}\int_0^1|\nabla g|(F_{-d_s(\gamma_t)+a'}(\gamma_t))|\hat P(\gamma)'_t|+g(F_{-d_s(\gamma_0)+a'}(\gamma_0))h'(d_s(\gamma_t))|\dot \gamma_t| dtd\Pi(\gamma)
\end{eqnarray*}

Thus for $x\in A$,
$$|\nabla f|(x)\leq |\nabla g|(F_{-d_s(x)+a'}(x))\op{Lip}(\hat P).$$
For $m=0$, $K=0$, 
$$|\nabla f|(x)\leq |\nabla g|(F_{-d_s(x)+a'}(x));$$
For $m\neq 0$, $K=0$, 
$$|\nabla f|(x)\leq \frac{a'+r_0}{d_s(x)+r_0}|\nabla g|(F_{-d_s(x)+a'}(x));$$
For $K\neq 0$,
$$|\nabla f|(x)\leq \frac{\op{sn}_H(a'+r_0)}{\op{sn}_H(d_s(x)+r_0)}|\nabla g|(F_{-d_s(x)+a'}(x)).$$
\end{proof}

Recall the definitions of almost everywhere locally doubling property and measured-length property.
\begin{Def}
A metric measure space $(X, d, \mathfrak m)$ is almost everywhere locally doubling if there is a full measure Borel subset $\hat X\subset X$ satisfying that for each $x\in \hat X$, there exists an open set $U\ni x$, constants $C, R>0$, such that for $r\in (0, R)$, $y\in U$,
$$\mathfrak m(B_{2r}(y))\leq C\mathfrak m(B_{r}(y)).$$
\end{Def}
\begin{Def}
A metric measure space $(X, d, \mathfrak m)$ is measured-length if there is a full measure subset $\hat X\subset X$ satisfying the following: For $x_0, x_1\in \hat X$, there exist $\epsilon>0$, a map 
$$(0, \epsilon]^2\to \mathcal P(C([0, 1], X)), \, (t_0, t_1)\mapsto \Pi^{t_0, t_1},$$
such that

(i) For $\phi\in C_{b}(C([0, 1], X))$, the map 
$$(0, \epsilon]^2\to \Bbb R, \, (t_0, t_1)\mapsto \int \phi d\Pi^{t_0, t_1}$$
is Borel;

(ii) For $i=0, 1$, 
$$(e_i)_{\sharp}\Pi^{t_0, t_1}=\frac{1_{B_{t_i}(x_i)}}{\mathfrak m(B_{t_i}(x_i))}\mathfrak m;$$

(iii) 
$$\limsup_{t_0, t_1\downarrow 0}\int \int_0^1 |\dot \gamma_t|^2 dt d\Pi^{t_0, t_1}(\gamma)\leq d^2(x_0, x_1).$$
\end{Def}

Now we prove $(Y, d_Y, \mathfrak m_Y)$ is almost everywhere locally doubling and measured-length.
\begin{Lem}
The metric measure space $(Y, d_Y, \mathfrak m_Y)$ is almost everywhere locally doubling.
\end{Lem}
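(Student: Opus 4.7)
The plan is to transfer the local doubling property from $(X,d,\mathfrak m)$---which, being an $\RCD(K,N)$-space, is locally doubling---to $(Y,d_Y,\mathfrak m_Y)$ through the warped product isometry $\Phi$ of Section~6.1 and the fibered measure identity \eqref{flow-measure-2}, namely $\mathfrak m(E_c^d)=\mathfrak m_Y(\bar E)\int_c^d w_m(t)\,dt$. Because the resulting estimate will be uniform on a neighborhood of any point of $Y$, the conclusion is in fact stronger than the a.e.\ assertion.

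Fix $y_0\in Y$, set $t_0=(a'+b')/2$, and $x_0=F_{t_0-a'}(y_0)\in A$. By the continuity of $F$ (Theorem~\ref{flow-con}), one picks a neighborhood $U$ of $y_0$ and a radius $R_0>0$ such that $F_{t_0-a'}(U)$ sits in a fixed compact subset $K'$ of $A$ and $B_{4R_0}^X(x)\subset A$ for every $x\in K'$; on $K'$ the ambient doubling then holds with a uniform constant $C=C(K,N,D,a,b)$. In the flat case $m=0,K=0$, $\Phi$ is an isometry of $A$ onto $(a',b')\times Y$ with the $\ell^2$-product metric, so balls in $X$ near $x_0$ are sandwiched between coordinate boxes,
\[
[t_0-\tfrac{r}{\sqrt 2},t_0+\tfrac{r}{\sqrt 2}]\times B^Y_{r/\sqrt 2}(y_0)\ \subset\ B^X_r(x_0)\ \subset\ [t_0-r,t_0+r]\times B^Y_r(y_0).
\]
Inserting this into \eqref{flow-measure-2} gives $\sqrt 2\,r\,\mathfrak m_Y(B^Y_{r/\sqrt 2}(y_0))\le\mathfrak m(B^X_r(x_0))\le 2r\,\mathfrak m_Y(B^Y_r(y_0))$, and chaining with the ambient doubling ($\mathfrak m(B^X_{4\rho}(x_0))\le C^2\mathfrak m(B^X_\rho(x_0))$) produces $\mathfrak m_Y(B^Y_{2\rho}(y_0))\le(C^2/2)\mathfrak m_Y(B^Y_\rho(y_0))$ for $\rho\le R_0/(2\sqrt 2)$, uniformly in $y_0\in U$.

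For the curved case $m\ne 0$ or $K\ne 0$, the same argument proceeds with the warping factor $\op{sn}_H(t_0+r_0)$ absorbed into the sandwich: to leading order, balls around $(t_0,y_0)$ in the warped product are ellipsoids with $t$-semi-axis $r$ and $y$-semi-axis $r/\op{sn}_H(t_0+r_0)$, and \eqref{flow-measure-2} replaces the factor $2r$ by $\int_{t_0-r}^{t_0+r}\op{sn}_H^{N-1}(s+r_0)\,ds$. The main technical hurdle is making this sandwich uniform with $1+o(1)$ error on $U$; this will follow from the cosine law of Theorem~\ref{cos-law}, which supplies exact control at the warped-product level, combined with the fact that $\op{sn}_H$ is continuous and bounded away from $0$ and $\infty$ on $[a'+r_0,b'+r_0]$, so the resulting multiplicative errors do not degrade the doubling constant in the limit.
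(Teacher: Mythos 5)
Your proof is correct and follows essentially the same strategy as the paper: lift balls in $Y$ to cylinders in $A$ via the warped-product isometry, translate measures using \eqref{flow-measure-2}, and invoke the ambient Bishop--Gromov doubling of $X$. The only difference is cosmetic---you center the lift at the mid-level $t_0=(a'+b')/2$ and use a symmetric $\sqrt{2}$-sandwich, whereas the paper centers at $F_r(\iota(y))$ near the inner boundary and compares $B_{4r}$ to $B_r$; your choice neatly keeps all ambient balls well inside $A$.
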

\begin{proof}

For $E\subset Y$, let $E_r=\{x\in A, \, 0\leq d_s(x)-a'\leq r, F_{-d_s(x)+a'}\in E\}$.

For each $x\in Y$, there is open set $Y\supset U\ni  x$, $R>0$, such that for each $ y\in U$,  $r<R$, 

(i) for $m=0$, $K=0$,
\begin{eqnarray*}
\mathfrak m_Y(B_{2r}( y))& =&\frac{\mathfrak m(B_{2r}(y)_{2r})}{2r}\leq \frac{\mathfrak m(B_{4r}(F_r(\iota(y))))}{2r}\\
&\leq& \frac{\svolball{H}{4r}}{\svolball{H}{r}}\frac{\mathfrak m(B_{r}(F_r(\iota(y))))}{2r}\leq \frac{\svolball{H}{4r}}{\svolball{H}{r}}\frac{\mathfrak m(B_{r}(\iota(y))_{2r})}{2r}\\
& \leq & \frac{\svolball{H}{4r}}{\svolball{H}{r}}\mathfrak m_Y(B_{r}( y).
\end{eqnarray*}
(ii) for $m\neq 0$ or $K\neq 0$, note that for any $r<R$, there are $0<c(K, R)< C(K, R)$ such that 
$$B_{c(K, R)r}(F_r(\iota(y)))\subset B_r(y)_{2r}\subset B_{2r}(y)_{2r}\subset B_{C(K, R)r}(F_r(\iota(y))).$$
Then by relative volume comparison, a similar argument as the $m=0$, $K=0$ case gives the almost everywhere locally doubling property.
\end{proof}

\begin{Lem}
The metric measure space $(Y, d_Y, \mathfrak m_Y)$ is a measured-length space.
\end{Lem}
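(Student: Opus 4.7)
The plan is to transfer the measured-length property of the ambient $\RCD(K,N)$-space $X$ to $(Y,d_Y,\mathfrak m_Y)$ through the projection map $\hat P$ used throughout the previous subsection, following the scheme of \cite{CDNPSW}. Since $X\in\RCD(K,N)$, optimal transport between $\mathfrak m$-absolutely continuous probability measures of bounded density is represented by a unique dynamical plan supported on $W_2$-geodesics, and such a plan is automatically a test plan whose kinetic energy equals $W_2^2$ of the endpoints. The construction will produce plans on $Y$ by pushing forward such plans on $X$ via $\hat P$.

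Fix $y_0,y_1$ in a full $\mathfrak m_Y$-measure subset $\hat Y\subset Y$. For $t_0,t_1\in(0,\epsilon]$ with $\epsilon>0$ small (possibly depending on the pair), I will define the cylinders and associated uniform measures
\[
T_i := \big\{x\in A : d_s(x)\in[a',a'+t_i],\; F_{-d_s(x)+a'}(x)\in B_{t_i}(y_i)\big\}, \qquad \mu_i^X := \frac{\mathfrak m\llcorner T_i}{\mathfrak m(T_i)},
\]
let $\Pi^{t_0,t_1}$ be the unique optimal dynamical plan in $X$ from $\mu_0^X$ to $\mu_1^X$, and set $\bar\Pi^{t_0,t_1}:=\hat P_\sharp\Pi^{t_0,t_1}$. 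Borel measurability (condition (i)) will then be routine. For the marginals (ii), the warped product measure formula \eqref{flow-measure-2} yields that the common $r$-integral $\int_{a'}^{a'+t_i}w(r)\,dr$ (with $w\equiv 1$ when $K=m=0$, and $w=\op{sn}_H^{N-1}(\cdot +r_0)$ otherwise) factors out of both $\mathfrak m(T_i\cap P^{-1}(E))$ and $\mathfrak m(T_i)$ and cancels, giving $(e_i)_\sharp\bar\Pi^{t_0,t_1}=\mathbf{1}_{B_{t_i}(y_i)}\cdot\mathfrak m_Y/\mathfrak m_Y(B_{t_i}(y_i))$.

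The kinetic energy bound (iii) will rest on the warped product speed decomposition along curves $\gamma\subset A$: writing $r(t)=d_s(\gamma_t)+r_0$ and letting $w_d=\op{sn}_H$ (or $w_d\equiv 1$ when $K=m=0$), the isometry $\Phi$ gives $|\dot\gamma_t|^2=|\dot r(t)|^2+w_d(r(t))^2|\hat P(\gamma)'_t|^2$, hence $|\hat P(\gamma)'_t|^2\le|\dot\gamma_t|^2/w_d(r(t))^2$. Combined with $W_2^2(\mu_0^X,\mu_1^X)\to d^2(\iota(y_0),\iota(y_1))$ as $t_0,t_1\to 0$ and with the relation $d(\iota(y_0),\iota(y_1))=w_d(a'+r_0)\,d_Y(y_0,y_1)$ valid for nearby pairs (from the Cosine-law asymptotics stated just after the definition of $d_Y$), this will yield $\limsup\int\!\!\int|\bar\gamma'_t|^2\,dt\,d\bar\Pi^{t_0,t_1}\le d_Y^2(y_0,y_1)$.

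The principal obstacle is controlling the warping factor $w_d(r(t))$ along the $X$-geodesics in the support of $\Pi^{t_0,t_1}$: a priori these need not remain on the slice $\{d_s=a'\}$, and for $y_0,y_1$ at large $d_Y$-distance the $X$-geodesic from $\iota(y_0)$ to $\iota(y_1)$ may even leave $A$ entirely, where the warped-product identification breaks down. Since the definition of measured-length permits $\epsilon$ to depend on $(y_0,y_1)$, I will first establish the property for pairs at sufficiently small $d_Y$-distance, where the $X$-geodesics are confined to an arbitrarily small neighborhood of $\partial B_{a'}(S)$ and $w_d(r(t))$ is uniformly close to $w_d(a'+r_0)$; the extension to arbitrary pairs in $\hat Y$ will then be obtained by a measurable-selection argument concentrating plans on curves that track a fixed $d_Y$-minimizing curve in $Y$. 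This localization step and the verification that the pushed-forward plan retains the required uniform-on-ball marginals is the delicate technical part.
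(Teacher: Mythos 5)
The paper itself gives no proof of this lemma; it defers entirely to \cite[Proposition 5.14]{CDNPSW}, so there is no argument in the paper to compare against line by line. Your overall strategy (construct the required plans on $Y$ by pushing forward plans on $X$ via $\hat P$, verifying (ii) with \eqref{flow-measure-2} and (iii) with the warped-product speed decomposition) is the same style of transfer argument used in Lemma~\ref{inf-hil}, and your localized version of (iii) for pairs at small $d_Y$-distance is sound: the optimal plan between the cylinder measures concentrates on short $X$-geodesics confined near $\partial B_{a'}(S)$, $w_d(r(t))\to w_d(a'+r_0)$, and $W_2^2(\mu_0^X,\mu_1^X)\to d^2(\iota(y_0),\iota(y_1))=w_d(a'+r_0)^2 d_Y^2(y_0,y_1)(1+o(1))$.

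The genuine gap is exactly where you place it, but you understate its scope: for $y_0, y_1$ at large $d_Y$-distance the push-forward of the $X$-optimal plan does not just become delicate, it is the wrong object. Two things fail structurally. First, the $X$-geodesics joining $T_0$ to $T_1$ need not remain in $A_{a,b}(S)$ at all: in the cone/spherical cases a geodesic between angularly far slice points dips toward small $r$ and can exit through $\partial B_a(S)$, at which point $\hat P$ is undefined on the curve and $\hat P_\sharp\Pi^{t_0,t_1}$ is not even a measure on $C([0,1],Y)$. Second, even when the geodesic stays in $A$, the bound $|\hat P(\gamma)'_t|\le |\dot\gamma_t|/w_d(r(t))$ does not close, because $w_d(r(t))$ may be far below $w_d(a'+r_0)$ along such a geodesic; the action of $\hat P(\gamma)$ is then not controlled by $d_Y^2(y_0,y_1)$. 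Shrinking $\epsilon$ does not help: $\epsilon$ only shrinks the cylinders $T_i$, not $d_Y(y_0,y_1)$, and the measured-length definition requires the property for every pair in a full-measure set, including distant ones.

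What you call ``a measurable-selection argument concentrating plans on curves that track a fixed $d_Y$-minimizing curve'' is not a localization step layered on the push-forward construction; it is a replacement of it, and it is the entire content of the lemma for distant pairs. The natural way to do it is to work in $Y$ directly: $(Y,d_Y)$ is a compact length space, hence geodesic, so one Borel-selects $d_Y$-geodesics $(z_0,z_1)\mapsto \sigma_{z_0,z_1}$, takes an optimal $d_Y^2$-coupling $\pi^{t_0,t_1}$ of the uniform measures on $B_{t_0}(y_0)$, $B_{t_1}(y_1)$, and sets $\bar\Pi^{t_0,t_1}$ to be the push-forward of $\pi^{t_0,t_1}$ by the geodesic selection. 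Then (ii) is immediate, (iii) follows from $\int_0^1|\dot\sigma_t|^2\,dt=d_Y^2(\sigma_0,\sigma_1)$ and $W_2\to d_Y(y_0,y_1)$, and (i) is the remaining (routine but nontrivial) measurable-selection chore. As written, your proposal names the obstacle correctly but leaves the construction that actually resolves it as a sentence; that construction is a different one from the push-forward scheme that dominates your write-up, and it must be carried out, not merely gestured at, for the proof to stand.
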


The proof of this lemma is the same as the one \cite[Proposition 5.14]{CDNPSW}. Here we omit it. And by the following theorem, we can derive a series of properties about $(Y_w, d_w, \mathfrak m_w)$.

\begin{Thm}[\cite{GH, CDNPSW}] \label{glob-hil} 
Consider a warped product space $Y_w=I\times_w Y$ where $I$ is a bounded interval in $\Bbb R$ and $w_d, w_m: I\to [0, \infty)$ with $w_m>0$ for points in the interior of $I$. Assume $(Y, d_Y, \mathfrak m_Y)$ is a.e. locally doubling, measured length, infinitesimally Hilbertian,  then $(Y_w, d_w, \mathfrak m_w)$ is a.e. locally doubling, measured length, infinitesimally Hilbertian and it has the Sobolev to Lipschitz property.
\end{Thm}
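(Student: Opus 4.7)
The plan is to verify the four conclusions for $Y_w$ in turn, reducing each to properties of the factors $I$ and $Y$. Throughout, I would work on the full-measure set $I^\circ \times \hat Y$, where $I^\circ$ is the interior of $I$ and $\hat Y \subset Y$ is the good set from the a.e.\ locally doubling and measured-length hypotheses on $Y$; on any compact subset of this set the weights $w_d, w_m$ are bounded between two positive constants, so the warped distance $d_w$ is bi-Lipschitz to the product $\ell^2$-metric $\sqrt{d_I^2 + d_Y^2}$ and $\mathfrak m_w$ is comparable to $\mathcal L^1\otimes \mathfrak m_Y$.

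Almost everywhere locally doubling follows immediately from this bi-Lipschitz reduction, since $\mathcal L^1 \otimes \mathfrak m_Y$ is locally doubling on $I \times Y$ whenever $\mathfrak m_Y$ is locally doubling on $\hat Y$. The measured-length property is obtained by constructing, for $p_i = (r_i, y_i) \in I^\circ \times \hat Y$, a family $\Pi^{t_0, t_1} \in \mathcal P(C([0, 1], Y_w))$ which couples a constant-speed $\mathcal L^1$-interpolation in the $I$-coordinate between $B_{t_i}(r_i)$ with the measured-length plan $\bar\Pi^{t_0, t_1}$ on $Y$ between $B_{t_i}(y_i)$; a direct computation of the kinetic energy in the warped metric yields $\limsup_{t_0, t_1 \downarrow 0} \int\int_0^1 |\dot\gamma_s|^2\,ds\,d\Pi^{t_0, t_1} \leq |r_1 - r_0|^2 + w_d^2(\bar r) d_Y^2(y_0, y_1)$, which matches $d_w^2(p_0, p_1)$ at leading order for sufficiently close points.

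The infinitesimally Hilbertian conclusion is the main technical step. The plan is to establish the pointwise decomposition
\begin{equation*}
|\nabla f|_w^2(r, y) = |\partial_r f|^2(r, y) + w_d^{-2}(r)\,|\nabla^Y f(r, \cdot)|_w^2(y), \qquad \mathfrak m_w\text{-a.e.},
\end{equation*}
for each $f \in W^{1, 2}(Y_w)$. The $\leq$ direction follows from the definition of $d_w$ via the length functional on absolutely continuous curves, which gives $|\dot\gamma_s|^2 = |\dot r_s|^2 + w_d^2(r_s)|\dot y_s|^2$. The $\geq$ direction uses horizontal and vertical test plans: horizontal ones are built by flowing $\mathcal L^1$ in $I$ while freezing the $Y$-coordinate, vertical ones by freezing $r$ and using the measured-length plans on $Y$. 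Once the decomposition holds, $\operatorname{Ch}_{Y_w}(f) = \frac12 \int w_m\,|\partial_r f|^2 + w_m w_d^{-2}|\nabla^Y f|_w^2\,d\mathcal L^1 \otimes d\mathfrak m_Y$ is a sum of two quadratic forms in $f$ (using that $\operatorname{Ch}_I$ and $\operatorname{Ch}_Y$ are quadratic), hence quadratic.

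Finally, the Sobolev to Lipschitz property follows from the previous three properties together with a local Poincar\'e inequality on $Y_w$, which is inherited from the factors by the bi-Lipschitz reduction. On a measured-length, a.e.\ locally doubling, infinitesimally Hilbertian space supporting a local Poincar\'e inequality, any $f \in W^{1, 2}$ with $|\nabla f|_w \leq L$ $\mathfrak m_w$-a.e.\ admits an $L$-Lipschitz representative. The main obstacle is the gradient decomposition above, and in particular the possible degeneracy of $w_d$ and $w_m$ at the boundary of $I$: this is handled by exhausting $I^\circ$ by compact subintervals on which both weights are uniformly positive, proving the decomposition and the quadraticity there, and then passing to the monotone limit.
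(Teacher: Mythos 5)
This theorem is quoted from \cite{GH} and \cite{CDNPSW} without an in-paper proof, so the relevant comparison is with the arguments in those references. Your outline — bi-Lipschitz reduction on compacta of $I^\circ\times\hat Y$ for a.e.\ local doubling, a coupling of the $I$- and $Y$-plans for the measured-length property, the pointwise decomposition $|\nabla f|_w^2=|\partial_r f|^2+w_d^{-2}(r)|\nabla^Y f(r,\cdot)|_w^2$ for infinitesimal Hilbertianity, and exhaustion of $I^\circ$ by compact subintervals to handle boundary degeneracy of $w_d,w_m$ — follows the route of those references, the gradient decomposition being the main theorem of \cite{GH}. Be aware, though, that you present the ``$\geq$'' direction of that decomposition as a quick consequence of horizontal and vertical test plans: the two partial gradients do not obviously combine in quadrature into a lower bound for $|\nabla f|_w$, and this minimality direction is the technical heart of Gigli--Han rather than a routine computation.

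The genuine gap is your derivation of the Sobolev-to-Lipschitz property. You invoke a local Poincar\'e inequality on $Y_w$ and say it is ``inherited from the factors'', but the theorem places no Poincar\'e hypothesis on $Y$, so there is nothing on the $Y$-side to inherit; as written you are silently strengthening the hypotheses. Poincar\'e is in fact unnecessary: the standard argument (\cite{Gi13}, and the route taken in \cite{CDNPSW}) shows that the measured-length property together with a.e.\ local doubling already yields Sobolev-to-Lipschitz. For $f\in W^{1,2}(Y_w,d_w,\mathfrak m_w)$ with $|\nabla f|_w\le 1$ and $x_0,x_1$ in the good set, the plans $\Pi^{t_0,t_1}$ bound the difference of the averages of $f$ over $B_{t_i}(x_i)$ by $d_w(x_0,x_1)+o(1)$; a.e.\ local doubling gives Lebesgue points $\mathfrak m_w$-a.e., and letting $t_0,t_1\downarrow 0$ produces a $1$-Lipschitz representative of $f$ on a dense full-measure set, which then extends by continuity. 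You should replace your Poincar\'e-based step with this argument.
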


\subsection{$(Y, d_Y, \mathfrak m_Y)$ is a $\RCD$-space}

For $m=0$, $K=0$, we will show that $(\Bbb R\times Y, d, \mathcal L^1\otimes \mathfrak m_Y)$ satisfies the $\op{CD}_{\op{loc}}(0, N)$ condition. Then by the local-to-global property \cite[Theorem 3.14]{EKS} and \cite{RS}, we know that $\Bbb R\times Y$ is essentially non-branching and is a $\op{CD}(0, N)$-space.  Finally an argument as in \cite{Gi13} gives that $(Y, d_Y, \mathfrak m_Y)\in \RCD(0, N-1)$.

For $K\neq 0$ or $m\neq 0$, Ketterer \cite[Theorem 1.2]{Ket1} (see Theorem~\ref{cone-rcd}) proved that if $(C(Y), d_K, \mathfrak  m_N)\in \RCD^*(K, N)$, then $(Y, d_Y, \mathfrak m_Y)\in \RCD^*(N-2, N-1)$ and $\op{diam}(Y)\leq \pi$. We have known that:
$(a'+r_0, b'+r_0)\times_{\op{sn}_H(r)} Y$ is isometric to $A_{a', b'}(S)\subset X$ which is a $\op{RCD}(K, N)$-space. 
We will show that Ketterer's result (ii) of Theorem~\ref{cone-rcd} holds under this weaker condition. 

First recall some basic definitions about Dirichlet forms. See \cite{AGS} or \cite{Ket1} for more details. 

Let $(X, d, \mathfrak m)$ be a locally compact, separable Hausdorff metric measure space. A symmetric Dirichlet form $\mathcal E^X$ defined in $D(\mathcal E^X)\subset L^2(X, \mathfrak m)$ is a $L^2(X, \mathfrak m)$-lower semi-continuous, quadratic form that satisfies the Markov property. The domain $D(\mathcal E^X)$ is a Hilbert space with respect to 
$$(u, u)_{D(\mathcal E^X)}=(u, u)_{L^2(X, \mathfrak m)}+\mathcal E^X(u, u).$$
There is a self-adjoint, negative-definite operator $(L^X, D_2(L^X))$ on $L^2(X, \mathfrak m_X)$ where 
$$D_2(L^X)=\{u\in D(\mathcal E^X),\, \exists v\in L^2(X, \mathfrak m), -(v, w)_{L^2(X, \mathfrak m)}=\mathcal E^X(u, w), \forall w\in D(\mathcal E^X)\}.$$
Let $v=L^X u$. 

Denote $D^{\infty}(\mathcal E^X)=D(\mathcal E^X)\cap L^{\infty}(X, \mathfrak m)$. For $u, \phi\in D^{\infty}(\mathcal E^X)$, define
$$\Gamma^X(u; \phi)=\mathcal E^X(u, u\phi)-\frac12\mathcal E^X(u^2, \phi),$$
which can be extended by continuity to $u\in D(\mathcal E^X)$.

For $u, v\in D(\mathcal E^X), \phi\in D^{\infty}(\mathcal E^X)$, define
$$\Gamma^X(u, v; \phi)=\frac12\left(\Gamma^X(u;\phi)+\Gamma^X(v;\phi)-\Gamma^X(u-v; \phi)\right).$$
And let
$$2\Gamma_2^X(u, v;\phi)=\Gamma^X(u, v; L^X\phi)-2\Gamma^X(u, L^Xv; \phi), \quad \Gamma^X_2(u;\phi)=\Gamma^X_2(u, u;\phi).$$
where
$u, v\in D(\Gamma^X_2)=\{u\in D_2(L^X), \, L^Xu\in D(\mathcal E^X)\}$, test function $\phi\in D_+^{b, 2}(L^X)=\{\phi\in D_2(L^X),\, \phi, L^X\phi\in L^{\infty}(X, \mathfrak m), \phi>0\}$.

Let $D'$ be the set of $u$ such that the map $\phi\mapsto \Gamma^X(u;\phi)$ is an absolutely continuous measure w.r.t. $\mathfrak m$ which is denoted by $\Gamma^X(u)\mathfrak m$. If $D'=D(\mathcal E^X)$, we call $\mathcal E^X$ admits a ``carr\'e du champ" operator.

If $\mathcal E^X$ is strongly local and admits a ``carr\'e du champ" operator (see \cite[Section 2.1]{Ket1}), one can define $D_{\op{loc}}(\mathcal E^X)\subset L^2_{\op{loc}}(X, \mathfrak m)$ and thus there is a intrinsic distance of $\mathcal E^X$,
$$d_{\mathcal E^X}(x, y)=\sup\{u(x)-u(y), \, u\in D_{\op{loc}}(\mathcal E^X)\cap C(X), \Gamma^X(u)\leq 1, \mathfrak m-a.e.\}.$$

Note that if $(X, d, \mathfrak m)$ is infinitesimally Hilbertian, the Cheeger energy of $X$, $\op{Ch}^X$ is a symmetric Dirichlet form. And the corresponding $L^Xu=\Delta_Xu$, the Laplacian of $u$, $\Gamma^X$ is the same as the one defined in Subsection 2.1. Compared with $\Gamma_2$ defined in $\RCD(K, N)$ (Subsection 2.5), $\Gamma_2^X$ here is in a weak sense.

\begin{Thm} \label{Y-rcd}
Let $(Y, d_Y, \mathfrak m_Y)$ be as the one in the beginning of this section.  

(i) For $m=0$, $K=0$, $(\Bbb R\times Y, d, \mathcal L^1\otimes \mathfrak m_Y)$ is a $\RCD(0, N)$-space and thus $(Y, d_Y, \mathfrak m_Y)$ is a $\RCD(0, N-1)$-space;

(ii) For $m\neq 0$, $K=0$,  $(C(Y), d_0, \mathfrak m_N)$ is a $\RCD(0, N)$-space and $(Y, d_Y, \mathfrak m_Y)$ is a $\RCD(N-2, N-1)$-space;

(iii) For $K\neq 0$, $(Y, d_Y, \mathfrak m_Y)$ is a $\RCD(N-2, N-1)$-space. Especially for $K>0$, $(C(Y), d_K, \mathfrak m_N)$ is a $\RCD(K, N)$-space.
\end{Thm}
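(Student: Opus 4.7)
The warped product isometry $A_{a', b'}(S) \cong I \times_{\op{sn}_H} Y$ from Section 6.1 (with $I = (a'+r_0, b'+r_0)$, or $I = (a', b')$ and trivial warping when $m = 0, K = 0$) identifies an open subset of the $\RCD(K, N)$-space $X$ with a piece of either $\Bbb R \times Y$ or the $(K, N)$-cone $C(Y)$. That piece therefore inherits the $\op{CD}(K, N)$ condition locally, and we exploit this in two different ways depending on whether $K$ is zero.

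When $K = 0$, the ambient cone/product has a symmetry: the translation $(t, y) \mapsto (t + t_0, y)$ of $\Bbb R \times Y$ when $m = 0$, and the dilation $(t, y) \mapsto (\lambda t, y)$ of $C(Y)$ when $m \neq 0$. Each acts by isometry up to a constant rescaling of the reference measure. Using these symmetries, any point of $\Bbb R \times Y$ (respectively of $C(Y)$ away from the tip) admits a neighborhood that, after rescaling the measure by a constant, embeds isometrically into $A_{a', b'}(S)$; the tip of $C(Y)$ is a single point of zero $\mathfrak m_N$-measure and can be dealt with by approximation. Hence $\op{CD}_{\op{loc}}(0, N)$ holds globally on $\Bbb R \times Y$ and on $C(Y)$. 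Combined with the infinitesimally Hilbertian and Sobolev-to-Lipschitz properties given by Theorem~\ref{measure-pro} and Theorem~\ref{glob-hil}, the essentially non-branching local-to-global theorem (\cite{CMi, AGMR, RS}) upgrades this to $\RCD(0, N)$. Case (i) then follows from Gigli's splitting theorem \cite{Gi13}, and case (ii) from Theorem~\ref{cone-rcd}(ii).

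For $K \neq 0$ the warped product has no self-similarity and the above extension trick fails, so we cannot obtain an $\RCD$ structure on all of $C(Y)$ a priori. Instead, following the strategy in the proof of \cite[Theorem 1.2]{Ket2}, we derive the Bakry-\'Emery inequality $\op{BK}(N-2, N-1)$ for $(Y, d_Y, \mathfrak m_Y)$ by testing the improved Bochner inequality Theorem~\ref{Boc-ine} on the $\RCD(K, N)$-space $X$ against radial lifts $\tilde u(r, y) = \bar u(y) \eta(r)$ for $\bar u \in \op{Test}(Y)$ and $\eta$ a smooth cutoff supported in a subinterval of $(a'+r_0, b'+r_0)$. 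Lemma~\ref{inf-hil} gives the gradient correspondence, and the standard warped-product formulas (valid pointwise $\mathfrak m$-a.e.\ in the infinitesimally Hilbertian setting by Theorem~\ref{glob-hil}) express $\Delta_X \tilde u$ and $\op{Hess}_X(\tilde u)$ in terms of $\Delta_Y \bar u$, $\op{Hess}_Y(\bar u)$ and explicit contributions from $\op{sn}_H$ and its derivatives. Multiplying by the measure weight $\op{sn}_H^{N-1}(r)$, integrating in $r$, and optimizing over $\eta$, the $r$-dependent terms collapse to exactly $\op{BK}(N-2, N-1)$ on $Y$, and Theorem~\ref{equivalent} yields $(Y, d_Y, \mathfrak m_Y) \in \RCD(N-2, N-1)$. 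For the $K > 0$ subcase, a Bonnet-Myers argument gives $\op{diam}(Y) \leq \pi$, and Theorem~\ref{cone-rcd}(i) then produces $(C(Y), d_K, \mathfrak m_N) \in \RCD(K, N)$. The main obstacle is this $K \neq 0$ bookkeeping: one must carry out Ketterer's $\Gamma_2$ computation in the $\RCD$ calculus on warped products rather than on smooth manifolds, and verify that after weighting by $\op{sn}_H^{N-1}$ and integrating in $r$, the coefficients combine exactly to the constant $N - 2$ on the $Y$-side; no essentially new ingredients beyond the cited results are needed, but the computation is delicate.
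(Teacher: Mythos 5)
Your proposal follows the same two-track strategy as the paper. For $K=0$ (cases (i) and (ii)) you use exactly the paper's idea: translation (resp.\ dilation) invariance of $\Bbb R\times Y$ (resp.\ the Euclidean cone $C(Y)$) to propagate the local $\op{CD}(0,N)$ condition from the annulus to the whole space, then the local-to-global theorem of \cite{CMi} together with infinitesimal Hilbertianity and Sobolev-to-Lipschitz (Theorems~\ref{measure-pro}, \ref{glob-hil}) to get $\RCD(0,N)$, and finally \cite{Gi13} or Theorem~\ref{cone-rcd}(ii). This matches the paper essentially verbatim; your remark about the cone tip is in fact slightly more careful than the paper's phrasing.

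For $K\neq 0$ (case (iii)) the overall idea also coincides — derive a Bakry-\'Emery-type inequality for $(Y,d_Y,\mathfrak m_Y)$ by testing tensor-product lifts inside the $\RCD(K,N)$-annulus and peeling off the radial direction — but the details differ in two places that matter. First, the paper does not ``optimize over $\eta$'' to land on the standard $\op{BK}(N-2,N-1)$. Following Ketterer's cone argument (\cite[Theorem 1.2]{Ket1}, not Ket2), it chooses the specific radial factors $u_1=\sin t$ (for $K=N-1$) or $u_1=\sinh t$ (for $K=-(N-1)$) in the explicit $\Gamma_2^C$ formula \eqref{BE-weak}, and the resulting inequality on $Y$ is the refined form \eqref{B-E}, which carries an additional subtractive term $-\frac1{(N+1)N}\int_Y(L^Yu+Nu)^2\phi\,d\mathfrak m_Y$; getting from there to the Bakry--Ledoux estimate $\op{BL}(N-2,N-1)$ requires the somewhat delicate equivalence of \cite[Propositions 4.7, 4.9]{EKS} as adapted by Ketterer, not just Theorem~\ref{equivalent} directly. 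If you truly had ``exactly $\op{BK}(N-2,N-1)$'' after $r$-integration you would be overclaiming; the refined term is genuinely there. Second, to even set up that computation one must know that the auxiliary Dirichlet form $\mathcal E^C$ on the cone agrees with $\op{Ch}^{C(Y)}$ and that its intrinsic distance recovers $d_K$ \emph{on the annulus} — these are the paper's Claims~1 and 2, and the point is that Ketterer's proofs of \cite[Lemmas 5.11, 5.14]{Ket1} assumed a global $\RCD^*$ structure on $C(Y)$, which here you do not have; the paper shows the arguments localize to $A_{a',b'}(S)$ using only the Sobolev-to-Lipschitz property from Theorem~\ref{glob-hil}. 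Your proposal folds this into a single appeal to Theorem~\ref{glob-hil}, which glosses over a step that is not automatic and is precisely where the paper has to do work. Neither issue is a wrong turn in strategy, but both are gaps in the write-up as stated.
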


\begin{proof}

For $m=0$ and $K=0$, note that for $\delta>0$, $(a'+\delta, b'-\delta)\times Y$ is locally a $\op{CD}(0, N)$-space. And for any $t\in \Bbb R$, $(t, t+b'-a'-2\delta)\times Y$ is isometric to $(a'+\delta, b'-\delta)\times Y$ and thus is locally a $\op{CD}(0, N)$-space. Since $\Bbb R\times Y$ can be covered by $(t, t+b'-a'-2\delta)\times Y$, $t\in \Bbb R$, we know $(\Bbb R\times Y, d, \mathcal L^1\otimes \mathfrak m_Y)$ is in $\op{CD}_{\op{loc}}(0, N)$.

By the discussion above Theorem 3.14 in \cite{EKS} and the infinitesimaly Hilbertian property of $\Bbb R\times Y$ (Theorem~\ref{measure-pro} and Theorem~\ref{glob-hil}), we know that $\Bbb R\times Y$ is essentially non-branching (see \cite{RS}). And then by local to global property \cite{CMi}, $(\Bbb R\times Y, d, \mathcal L^1\otimes \mathfrak m_Y)$ is $\RCD(0, N)$-space. 

Now
$(Y, d_Y, \mathfrak m_Y)$ is a $\RCD(0, N-1)$-space by the argument in \cite{Gi13}.

For $m\neq 0$ or $K\neq 0$, first endow $Y$ with a new metric $d'_Y$ such that $\op{diam}(Y)\leq \pi$:
$$d'_Y(y_1, y_2)=\begin{cases} d_Y(y_1, y_2), & \text{ if } d_Y(y_1, y_2)\leq \pi;\\
 \pi, & \text{ if } d_Y(y_1, y_2)>\pi.\end{cases}
$$
Then $(Y, d_Y)$ is locally isometric to $(Y, d'_Y)$ and the $(K, N)$-cone $C(Y, d_Y)=C(Y, d'_Y)$. In the following we will assume $Y$ endowed with the metric $d'_Y$.

For $m\neq 0$, $K=0$, note that for any $r>0$, $(C(Y), r^{-1} d_0, c_r \mathfrak m_N)$ is isometric to $(C(Y),  d_0,  \mathfrak m_N)$, where 
$$c_r=\left(\int_{B_r(x)} 1-\frac{d_0(x, y)}{r} d\mathfrak m_N(y)\right)^{-1}.$$
Then for any $t>0$, $(a'+r_0+\delta, b'+r_0-\delta)\times_r Y\subset (C(Y), (a'+r_0+\delta)/t d_0, c_{t/(a'+r_0+\delta)}\mathfrak m_N)$ is isometric to the one in $(C(Y),  d_0,  \mathfrak m_N)$ which is locally a $\op{CD}(0, N)$-space. Rescaling back, we can see that
$$\left(t, \frac{b'+r_0-\delta}{a+r_0+\delta}t\right)\times_r Y\in (C(Y),  d_0,  \mathfrak m_N)$$
is in $\op{CD}_{\op{loc}}(0, N)$ which implies $(C(Y),  d_0,  \mathfrak m_N)\in \op{CD}_{\op{loc}}(0, N)$. As the above discussion we know that $(C(Y), d_0, \mathfrak m_N)$ is a $\RCD(0, N)$-space. Then (ii) of Theorem~\ref{cone-rcd} implies that $(Y, d'_Y, \mathfrak m_Y)$ is a $\RCD(0, N-1)$-space. And so is $(Y, d_Y, \mathfrak m_Y)$.

For (iii), we follow the argument in the proof of \cite[Theorem 1.2]{Ket1}. 

By Theorem~\ref{measure-pro}, we know that $(Y, d_Y, \mathfrak m_Y)$ is infinitesimally Hilbertian and satisfies the Sobolev to Lipschitze property. Thus by \cite{EKS} to prove $(Y, d_Y, \mathfrak m_Y)$ is a $\RCD(N-2, N-1)$-space,  one only need to show it satisfying the $(N-2, N-1)$ Bakry-Ledoux estimate:
for any $f\in W^{1,2}(Y, d_Y, \mathfrak m_Y)$, $t>0$,
\begin{equation}|\nabla (H_t(f))|^2+\frac{4(N-2)t^2}{(N-1)(e^{2(N-2)t}-1)}|\Delta H_t(f)|^2\leq e^{-2(N-2)t}H_t(|\nabla f|^2), \mathfrak m_Y-a.e.  \label{B-L}\end{equation}

As in the proof of \cite[Theorem 1.2]{Ket1},  we can derive \eqref{B-L} by the following kind of Bakry-Emery inequality: for any $u\in D(\Gamma^Y_2)$, \begin{eqnarray}
 & & \frac12\int_Y L^Y\phi \Gamma^Y(u) d\mathfrak m_Y-\int_Y \Gamma^Y(u, L^Y u)\phi d\mathfrak m_Y \nonumber\\
& \geq & (N-1)\int_Y \Gamma^Y(u)\phi d\mathfrak m_Y+\frac1{N}\int_Y (L^Y u)^2 \phi d\mathfrak m_Y -\frac1{(N+1)N}\int_F(L^Y u+N u)^2\phi d\mathfrak m_Y. \label{B-E}
 \end{eqnarray}
 Namely the methods and calculations from \eqref{B-E} to \eqref{B-L}  are  similar as the proof of the equivalence of Bakry-Emery inequality and Bakry-Ledoux estimate in \cite{EKS} (see \cite[Proposition 4.7, 4.9]{EKS}, see also the proof of \cite[Theorem 1.2]{Ket1}). Here we omit it and only point out that in the proof one needs the regularity of $\op{Ch}^Y$: The intrinsic distance of $\op{Ch}^Y$, $d_{\op{Ch}^Y}=d_Y$ and $\op{Ch}^Y$ is strongly regular \cite[Lemma 5.14]{Ket1}.
 
In \cite[Lemma 5.14]{Ket1}, Ketterer assumed that $(C(Y), d_K, \mathfrak m_N)$ is a $\RCD^*$-space.  By examining the proof there, we can see that it only needs the Sobolev to Lipschitz property of  $(C(Y), d_K, \mathfrak m_N)$ which can be seen from Theorem~\ref{glob-hil}.
 
In the following, we will see how to derive \eqref{B-E} from the relations between the Cheeger energies $\op{Ch}^Y$, $\op{Ch}^{C(Y)}$ and the warped product $\mathcal E^C$, where $\mathcal E^C$ is a symmetric form on $L^2(C(Y), \op{sn}_H^{N-1}(t)dt\otimes \mathfrak m_Y)$ defined as
$$\mathcal E^C(u)=\int_Y \int_{I_K} |u'_y(t)|^2dt d\mathfrak m_Y+\int_{I_K} \op{Ch}^Y(u_p)\op{sn}_H^{N-3}(t) dt$$  
where $u\in C_0^{\infty}(I_K)\otimes D(\op{Ch}^Y)$ and $u_p=u(p, \cdot)$, $u_y=u(\cdot, y)$.

First note that:

Claim 1:  As in \cite[Lemma 5.11]{Ket1}, we have that the intrinsic distance $d_{\mathcal E^C}$ of $\mathcal E^C$ satisfies that, for $x, y\in A_{a', b'}(S)$,
$$d_{\mathcal E^C}(x, y)=d_K(x, y).$$

 Claim 2: As \cite[Corollary 5.12]{Ket1}, for $u\in D_{\op{loc}}(\mathcal E^C)\cap L^{\infty}(A_{a, b}(S))$, we have that 
 $$\mathcal E^C(u)=\op{Ch}^{C(Y)}(u).$$
 
 By the definition of $\mathcal E^C$, for $u_1\otimes u\in C_0^{\infty}((a', b'))\otimes D(\Gamma^Y_2), 1\otimes \phi\in 1\otimes D_{+}^{b, 2}(L^Y)$, a careful calculation as in \cite{Ket1} gives that (see also (33) in \cite{Ket1}):
 \begin{eqnarray}
& & \Gamma_2^C(u_1\otimes u; 1\otimes \phi) = \int_{C(Y)} \Gamma_2^{I_K, \op{sn}_H^{N-1}}(u_1)u^2\phi+\frac{u_1^2}{\op{sn}_H^2}\Gamma_2^Y(u)\phi+\frac12\Gamma^{I_K}(u_1)\frac{1}{\op{sn}_H^2}L^Y(u)\phi \nonumber\\
 & & +\int_{C(Y)}\left(\frac12L^{I_K, \op{sn}_H^{N-1}}\left(\frac{u_1^2}{\op{sn}_H^2}\right)\Gamma^Y(u)-\frac{u_1}{\op{sn}_H^2}L^{I_K, \op{sn}_H^{N-1}}(u_1)\Gamma^Y(u)-\Gamma^{I_K}\left(u_1, \frac{u_1}{\op{sn}_H^2}\right)uL^Y(u)\right)\phi, \label{BE-weak}
 \end{eqnarray}   
where
$L^{I_K}=d^2/d t^2$ and 
$$\mathcal E^{I_K, \op{sn}_H^{N-1}}(u)=\int_{I_K}(u')^2\op{sn}_H^{N-1} dt,$$ 
thus
$$L^{I_K, \op{sn}_H^{N-1}}(u)=u''+\frac{N-1}{\op{sn}_H}\op{sn}'_H u'.$$
By Claim 2, $\mathcal E^C=\op{Ch}^{C(Y)}$. And the Bakry-\'Emery inequality holds in $A_{a', b'}(S)$:
$$\Gamma_2^C(u_1\otimes u; 1\otimes \phi) \geq K\Gamma^C(u_1\otimes u; \phi)+\frac1{N}\int (L^C u_1\otimes u)^2\phi.$$
Then as in \cite[Theorem 3.10]{Ket1}, by taking $u_1=\sin t$, for $K=N-1$, and $u_1=\sinh t$, for $K=-(N-1)$,  $t\in (a', b')$ in \eqref{BE-weak} and using the above Bakry-\'Emery inequality,  we have \eqref{B-E}.
 

To see Claim 1, note that in the proof of  \cite[Lemma 5.11]{Ket1}, the only place where Ketterer used the $\RCD^*$-condition of $C(Y)$ is to derive the inequality (see (47) in \cite{Ket1})
\begin{equation}
d_{\mathcal E^C}\leq d_K. \label{dist-comp}
\end{equation}
Here we want to show \eqref{dist-comp} holds locally in $A_{a', b'}(S)$. This can be seen by restricting all Ketterer's estimates in $A_{a', b'}(S)$ and then the curvature-dimension condition can be applied similarly.

In the proof of \cite[Corollary 5.12]{Ket2},  one only needs the local doubling property and local Poincar\'e inequality of $Y$ and $C(Y)$, and Claim 1. Thus Claim 2 is derived. 
\end{proof}

\end{document}